\documentclass[12pt,psamsfonts]{amsart}

\usepackage{amsmath,amssymb,amsthm,amsfonts,mathrsfs}
\usepackage{mathtools}
\usepackage{hyperref}
\usepackage{graphicx}
\usepackage{xcolor}
\usepackage{psfrag}
\usepackage{graphicx}
\usepackage{marvosym}
\usepackage{stmaryrd}
\usepackage{MnSymbol}
\usepackage{varioref}
\usepackage[english,capitalise]{cleveref} 
\usepackage{bbm}
\usepackage{bbold} 
\usepackage{tikz-cd}
\usepackage{tikz-3dplot}
\usetikzlibrary{arrows,decorations.markings,decorations.pathmorphing}
\usetikzlibrary{arrows.meta} 

\tdplotsetmaincoords{70}{110}

\numberwithin{equation}{section}

\newtheorem{lemma}[equation]{Lemma}
\newtheorem{corollary}[equation]{Corollary}
\newtheorem{theorem}[equation]{Theorem}
\newtheorem{remark}[equation]{Remark}
\newtheorem{proposition}[equation]{Proposition}

\theoremstyle{definition}
\newtheorem{definition}[equation]{Definition}
\newtheorem{notation}[equation]{Notation}
\newtheorem{example}[equation]{Example}

\newtheorem{question}[equation]{Question}

\def\CC{{\mathbb C}}
\def\PP{{\mathbb P}}
\def\RR{{\mathbb R}}
\def\TT{{\mathbb T}}
\def\ZZ{{\mathbb Z}}

\def\ft{{\mathfrak t}}

\def\cH{{\mathcal H}}
\def\cK{{\mathcal K}}

\def\cI{{\mathcal I}}
\def\cJ{\mathcal{J}}

\newcommand{\ftz}{\ft_{_\ZZ}}

\newcommand{\cO}{\mathcal{O}}

\def \one {\mathbbm{1}} 

\newcommand{\C}{\mathbb{C}}

\newcommand{\R}{\mathbb{R}}
\newcommand{\Z}{\mathbb{Z}}

\DeclarePairedDelimiter{\abs}{\lvert}{\rvert}

\DeclarePairedDelimiter{\pair}{\langle}{\rangle}
\DeclarePairedDelimiter{\paren}{(}{)}
\DeclarePairedDelimiterX{\set}[1]{\{}{\}}{\, #1 \,}  

\renewcommand{\epsilon}{\ensuremath\varepsilon}

\renewcommand{\phi}{\ensuremath{\varphi}}

\newcommand*{\conj}[1]{\overline{#1}}	
\newcommand*{\CP}[1]{\CC \PP^{#1}}		
\newcommand*{\RP}[1]{\RR \PP^{#1}}		
\newcommand*{\clos}[1]{\overline{#1}}	

\newcommand{\id}{\mathrm{id}}			
\newcommand{\T}{\mathbb{T}}				

\newcommand{\orm}{\mathrm{o}}

\DeclareMathOperator{\AGL}{AGL}
\DeclareMathOperator{\GL}{GL}			
\DeclareMathOperator{\Bl}{\mathcal{B}} 

\renewcommand{\arraystretch}{1.5}

\title{Toric real loci via moment polytopes}
\author{João Camarneiro, Ana Cannas da Silva}
\date{July 31, 2026}
\subjclass[2020]{Primary: 53D20, 57S12; Secondary: 32V40, 51M15, 14J45}
\keywords{Toric symplectic manifolds, real loci, moment polytopes, Fano manifolds.}

\begin{document}


\begin{abstract}
Toric real loci are distinguished lagrangian submanifolds of
toric symplectic manifolds.
We develop a polyhedral model for toric real loci, called a
\textit{kaleidoscope}, based on the restriction of the moment map.
This construction provides a direct geometric description
of the topology of toric real loci and leads to simple criteria
for orientability, together with transparent formulas
for the Euler characteristic.
We illustrate the theory with numerous examples and count,
in every complex dimension up to 9, the
smooth toric Fano varieties whose real loci are orientable.
The kaleidoscope construction offers a simple, visual, and effective
framework for understanding the topology
of toric real loci through the combinatorics of their moment polytopes.
\end{abstract}

\address{Maxwell Institute for Mathematical Sciences, School of Mathematics, University of Edinburgh, Edinburgh EH9 3FD, UK}
\email{joao.camarneiro@ed.ac.uk}

\address{Department of Mathematics, ETH Zurich,
8092 Zurich, Switzerland}
\email{ana.cannas@math.ethz.ch}

\maketitle

\thispagestyle{empty}
\tableofcontents
\clearpage


\section*{Introduction}

Toric symplectic manifolds form one of the most successful meeting points of symplectic
geometry, algebraic geometry, combinatorics, and topology.
Every toric symplectic manifold admits a canonical anti-symplectic involution
whose fixed-point set is a lagrangian submanifold called its \textit{real locus}.
Thanks to the one-to-one correspondence between compact toric symplectic manifolds
and unimodular polytopes due to Delzant~\cite{Delzant88},
it is possible to study questions of both toric symplectic manifolds
and their real loci through convex polytopes.

This paper introduces a new polyhedral model for toric real loci,
which we call a \textit{kaleidoscope}.
Rather than describing the real locus
through characteristic functions,
our approach builds it from copies of the moment polytope
identified along their facets.

Our main results show that the topology of a toric real locus
can be read directly from its associated kaleidoscope.
In particular, we obtain elementary geometric criteria for orientability,
derive transparent formulas for the Euler characteristic,
and describe the effect of toric blow-ups
within the same combinatorial model.
The kaleidoscope approach provides a convenient common path
to several previously known phenomena
that is particularly user-friendly and computationally simple.

The construction is closely related in spirit to the theory of
small covers introduced by Davis and Januszkiewicz~\cite{DavisJanuszkiewicz91},
but it is tailored specifically to toric real loci.
The emphasis throughout is on obtaining an
\textit{explicit geometric description} that makes
topological properties immediately accessible from the combinatorics
of the underlying polytope.

Basic examples are complex projective space $\CC\PP^n$
and its real locus $\RR\PP^n$, both governed by
the combinatorics of a standard $n$-dimensional simplex $\mathbb{\Delta}^n$.
The kaleidoscope construction presents $\RR\PP^n$ as a cluster of $2^n$ copies
of $\mathbb{\Delta}^n$, mirrored across their common facets,
with the outer facets further glued
in pairs in a center-symmetric fashion; cf.~\cref{fig:kaleidoscope_cp3}.
The Euler characteristics are
\[
\chi(\CC\PP^n)=n+1
\qquad \text{ and } \qquad
\chi(\RR\PP^n)= \begin{cases}
0 & \text{ when $n$ is odd,}\\
1 & \text{ when $n$ is even.}
\end{cases}
\]
From the toric point of view,
the first is simply the number of vertices of the simplex $\mathbb{\Delta}^n$.
In our perspective, the second is
the $(-2)^k$-weighted sum of the number of $k$-dimensional
faces ($k = 0, 1, \ldots, n$) of that simplex.
The binomial formula shows that this gives the same result (\cref{ex:euler_simplex}):
\[
\chi(\RR\PP^n)= \sum_{k=0}^n (-2)^k \textstyle{\binom{n+1}{k+1}} =
\tfrac{1}{2} \left( 1+(-1)^n \right) = \begin{cases}
0 & \text{ when $n$ is odd,}\\
1 & \text{ when $n$ is even.}
\end{cases}
\]
Moreover, $\RR\PP^n$ is orientable if
and only if $n$ is odd.
In our perspective, this occurs precisely when each primitive normal vector
to a facet of the simplex $\mathbb{\Delta}^n$ has an odd number of odd entries.

The principal contributions of this paper are:

\begin{itemize}
\item the introduction of polytope kaleidoscopes (\cref{def:kaleidoscope});

\item the topological realization of any toric real locus as a kaleidoscope (\cref{coroll:kaleidoscope_vs_real_locus});

\item a visual description of the behavior of kaleidoscopes under toric blow-ups
(\cref{prop:kaleidoscope_blowup} and \cref{prop:blow_up_along_edge});

\item simple formulas for the Euler characteristic (\cref{thm:euler_charact});

\item a simple geometric criterion for orientability (\cref{thm:orientability});

\item the application to counting smooth toric Fano varieties that have an orientable real locus (\cref{table:fano_orientable_real_loci}) and a discussion of these numbers;

\item the full list of 2-dimensional toric real loci (\cref{coroll:4dim});

\item numerous 3-dimensional real loci within the Miyake–Oda~\cite{Oda78} examples
(\cref{subsec:MiyakeOdaNagaya});

\item open questions (\cref{question:minimaltoricrealloci}, \cref{question:circlebundles} and \cref{question:3_geometries}).
\end{itemize}

The paper is organized as follows.
After reviewing the necessary background on toric symplectic manifolds
(\cref{sec:toric_symplectic_framework})
and their real loci (\cref{sec:toric_real_loci}),
we introduce kaleidoscopes and establish their basic properties (\cref{sec:kaleidoscopes}).
We then prove the correspondence between toric real loci and kaleidoscopes
and investigate the behavior under
toric blow-ups at a fixed point (\cref{sec:kaleidoscope_model}).
We go on to derive our Euler characteristic results (\cref{sec:euler_signature})
and the simple orientability criterion illustrated with the analysis
of toric Fano manifolds (\cref{sec:orientability}).
We conclude with a variety of concrete examples
demonstrating the effectiveness of the construction in low dimensions
(\cref{sec:case_n=2,sec:case_n=3}), complemented by an appendix
on the smooth projective toric varieties of complex dimension 3 with second Betti number at most 5.


\subsection*{Acknowledgments.}

The present work grew out of an EPFL MSc thesis written by J.C.
under the supervision of A.C. as an exchange project at ETH Zurich,
during the Fall semester of 2023/24.
We gratefully acknowledge stimulating and fruitful conversations with
Miguel Abreu, Paul Biran, Jo\'e Brendel, Carlos Florentino,
Leonor Godinho, Yael Karshon, Reto Kaufmann, Georgios Moschidis,
Ana Rita Pires, Silvia Sabatini, Johannes Schmitt,
Rosa Sena-Dias, and Nick Sheridan.
We are also grateful for the hospitality of the Department of Mathematics
at Instituto Superior T\'ecnico in the Spring of 2025 for the conclusion
of this project.
J.C.’s work was partially supported by an EPFL Excellence Fellowship
and by the UKRI Centre for Doctoral Training in Algebra,
Geometry and Quantum Fields (AGQ), Grant Number EP/Y035232/1.


This work includes results obtained with programs written using Sage \cite{sagemath} and Regina \cite{Regina}.
We are grateful to the developers and maintainers of these software projects.
We also thankfully acknowledge the use of datasets of Fano polytopes obtained by Mikkel {\O}bro \cite{ObroArchived} and Andreas Paffenholz \cite{PaffenholzSite}.
The complete code with accompanying commentary is available at \url{https://github.com/J-camarneiro/toric-real-loci}.




\section{Toric Symplectic Framework}
\label{sec:toric_symplectic_framework}

In this section, we review background notions
about toric symplectic manifolds,
mostly to fix notation and conventions.
Throughout, let $T$ denote an $n$-dimensional torus.\footnote{A
\textit{torus} $T$ is a compact connected abelian Lie group.
Its Lie algebra $\ft$ and the dual vector space $\ft^*$
come equipped with lattices:
the \textit{integral lattice} $\ftz \subset \ft$ is the kernel
of the exponential map, $\exp : \ft \to T$,
and the \textit{weight lattice} is
$\ftz^* := \{ \xi \in \ft^* \mid \xi ({\ftz}) \subseteq 2\pi\ZZ \}$.
A choice of an \emph{integral basis}, i.e., a $\ZZ$-basis
of $\ftz$, yields an
identification of $T$ with the \textit{standard torus}, $\TT^n$,
given by the product of $n$ unit circles $S^1 \subset \CC$.
Then $\ft$ and $\ft^*$ are also identified with $\RR^n$,
whereas $\ftz$ and $\ftz^*$ become $(2\pi\ZZ)^n$ and $\ZZ^n$, respectively.}

A \textit{toric symplectic $T$-manifold} is a triple $(M,\omega,\mu)$,
where $(M,\omega)$ is a compact connected symplectic $2n$-dimensional
manifold\footnote{Unless otherwise stated, all maps are assumed
to be smooth, i.e., infinitely differentiable, all manifolds are smooth
and have no boundary, and all submanifolds are smoothly embedded.}
equipped with a hamiltonian effective action of the torus $T$,
and with a choice of a moment map $\mu : M \to \ft^*$
for this action.\footnote{A \textit{moment map} for an action of a torus $T$
on a symplectic manifold $(M,\omega)$ is a $T$-invariant map
$\mu : M \to \ft^*$ such that for each $X \in \ft$ we have
$d\langle \mu , X \rangle = -\imath_{X^\#} \omega$, where $X^\#$ is
the vector field on $M$ generated by the one-parameter subgroup
$\{ \exp(tX) \mid t \in \RR \}$.}

Two toric symplectic $T$-manifolds, $(M_1,\omega_1, \mu_1)$
and $(M_2,\omega_2, \mu_2)$ are \emph{isomorphic},
if there exists a $T$-equivariant symplectomorphism
$\varphi \colon (M_1, \omega_1) \to (M_2, \omega_2)$ intertwining
the moment maps (called an \emph{isomorphism}), that is:
\[
   \varphi^* \omega_2 = \omega_1,
   \varphi (g \cdot p) = g \cdot \varphi (p)  \text{ and } 
   \mu_2 (\varphi (p)) = \mu_1 (p) 
   \text{ for each } g \in T, p \in M_1.
\]
They are \emph{weakly isomorphic}~\cite{KKP07,PPRS14,PelayoSantos23},
if there exists an automorphism
$h \colon T \to T$ and a symplectomorphism
$\varphi \colon (M_1, \omega_1) \to (M_2, \omega_2)$
such that, for any $g \in T$ and $p \in M_1$, we have
\[
   \varphi(g \cdot p) = h(g) \cdot \varphi(p).
\]

Given a toric symplectic $T$-manifold $(M,\omega,\mu)$,
an important role is played by the image of its moment map.
As a special case of
the Convexity Theorem~\cite{Atiyah82,GuilleminSternberg82},
this image,
\[
   \Delta \coloneqq \mu (M) \subset \ft^*,
\]
is the convex hull of the images of the fixed points of the action,
called the \textit{moment polytope} of $(M,\omega,\mu)$.
Moreover, $\Delta$ is
unimodular\footnote{\textit{Unimodularity} means that,
for each vertex of
$\Delta$, there is an integral basis of the weight lattice defining
the edges meeting at that vertex.} ~\cite[p.323]{Delzant88},
and any unimodular polytope in $\ft^*$ determines, up to isomorphism,
a toric symplectic $T$-manifold having that as moment polytope~\cite[Theorem 2.1 and Section 3]{Delzant88}.
That is, unimodular polytopes in $\ft^*$ classify
toric symplectic $T$-manifolds up to isomorphism,
where the correspondence is given by the moment map.
It follows that unimodular polytopes in $\ft^*$ up to 
$\AGL(n,\ZZ) := \RR^n \rtimes \GL(n,\ZZ)$
classify toric symplectic $T$-manifolds up to weak isomorphism.
Furthermore, the following facts hold~\cite[Lemma 2.2]{Delzant88}:
\begin{itemize}
\item
  The fibers of the moment map $\mu : M \to \Delta$ are the $T$-orbits.
\item
  For each $\xi \in \Delta$, the fiber $\mu^{-1}(\xi)$ is a torus
  of dimension equal to that of the open face of $\Delta$ containing $\xi$.
\item
  For each $p \in M$, the isotropy group
$T_p \coloneqq \{ g\in T \mid g \cdot p = p \}$
is the subtorus of $T$ whose Lie algebra is the annihilator in $\ft$
of the open face of $\Delta$ that contains $\mu(p)$.
\end{itemize}

Following Weinstein and Delzant (see, for
instance,~\cite[Lemma 2.5]{Delzant88} or~\cite[Proposition IV.4.21]{Audin}),
we have a simple semi-local normal form.\footnote{\textit{Semi-local}
refers to a neighborhood in $M$ that is the preimage under $\mu$
of an open subset in $\Delta$.}

\begin{proposition}[Semi-Local Normal Form for a Toric Symplectic Manifold]
\label{lem:delzant_normal_form}
Let \((M,\omega,\mu)\) be a toric symplectic $T$-manifold,
with moment polytope \(\Delta = \mu(M)\).
Let \(F\) be a $k$-dimensional face of \(\Delta\),
and \(V\) an open ball in \(F\) such that its closure
\(\overline{V}\) is compact and contained
in the relative interior of \(F\).
Identify \(\ft^* \cong \RR^n\) by choosing an integral basis
of the \(k\)-dimensional subspace of \(\ft^*\) parallel to \(F\),
and extending it to an integral basis\footnote{\textit{Integrality}
is with respect to the weight lattice, so that this isomorphism
identifies \(\ft^*_\Z \cong \Z^n\).} of \(\ft^*\), so that
\(V\) becomes an open subset of a standard
\(\RR^k \subseteq \RR^n\).
Let \(B(\epsilon)\) denote the open ball in \(\C\)
centered at the origin and with radius \(\epsilon\).
		
Then, there exist a neighborhood \(U\) of \(\mu^{-1}(V)\) in \(M\), a number \(\epsilon > 0\), and a $T \cong \TT^n$ equivariant symplectomorphism
\[
   \Phi \ \colon \ U \longrightarrow \TT^k \times V \times B(\epsilon)^{n-k}
   \ \subset \ \TT^k \times \RR^k \times \C^{n-k}
\]
intertwining the moment maps,
where $\TT^k \times \RR^k \times \C^{n-k}$
is equipped with
\begin{itemize}
\item
the symplectic form 
\[
   \omega_0 =
   \sum_{j=1}^k d\theta_j \wedge d\mu_j + \sum_{j=1}^{n-k} dx_j \wedge dy_j
\]
(here, $\theta_j$ is the $2\pi$-periodic coordinate on the $j$th factor
of $\TT^k$, $\mu_j$ the coordinate on the $j$th factor of $\RR^k$,
and $z_j = x_j + i y_j$ the coordinates on the $j$th factor of $\C^{n-k}$),
\item
the \(\TT^n\)-action by
\begin{align*}
& (g_1, \dots, g_n) \cdot (e^{i\theta_1}, \dots, e^{i\theta_k}, \mu_1, \dots, \mu_k, z_1, \dots, z_{n-k}) =\\ = \; & (g_1 e^{i\theta_1}, \dots, g_k e^{i\theta_k}, \mu_1, \dots, \mu_k, g_{k+1} z_1, \dots, g_n z_{n-k}),
\end{align*}
\item
and the moment map
\(\mu_0 \colon \TT^k \times V \times B(\epsilon)^{n-k} \to \RR^n\)
given by
\begin{align*}
& \mu_0(g_1, \dots, g_k, \mu_1, \dots, \mu_k, z_1, \dots, z_{n-k}) = \\
= \; & \xi + \paren*{\mu_1, \dots, \mu_k,
-\tfrac{1}{2}\abs{z_1}^2, \dots, -\tfrac{1}{2}\abs{z_{n-k}}^2}.
\end{align*}
\end{itemize}
\end{proposition}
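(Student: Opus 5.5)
The plan is to reduce the statement to the equivariant symplectic neighborhood theorem for an isotropic orbit, applied uniformly over $V$. The sign convention $-\tfrac12|z_j|^2$ means $\Delta$ lies locally in $\{\xi_j \le \text{const}\}$ in the last $n-k$ coordinates, and $\xi$ in the formula for $\mu_0$ is a base point chosen to match the translation.

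\textbf{Step 1: choose adapted coordinates.} Since $\Delta$ is unimodular, at a vertex of $F$ the primitive edge vectors form a $\ZZ$-basis of the weight lattice. The $k$ edges lying in $F$ span the direction of $F$. So, after an $\AGL(n,\ZZ)$ change and the identification in the statement, near $V$ the polytope looks like $V\times\{\eta\in\RR^{n-k}:\eta_j\le 0\}$, up to the translation by $\xi$. Here $\xi$ is chosen so that $F$ sits in $\{\eta=0\}$ translated by $\xi$. The facets through $F$ have primitive normals $e_{k+1},\dots,e_n$, with the sign convention above.

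\textbf{Step 2: identify the local model along one orbit.} Split $T\cong \TT^k\times\TT^{n-k}$. By the facts recalled from \cite[Lemma 2.2]{Delzant88}, for $p\in\mu^{-1}(V)$ the stabilizer $T_p$ is the subtorus $\TT^{n-k}$, whose Lie algebra is the annihilator of $F$. The orbit $T\cdot p\cong\TT^k$ is isotropic. Its symplectic normal space is a $2(n-k)$-dimensional symplectic representation of $\TT^{n-k}$. Its weights are determined by the local shape of the moment image: the image of a neighborhood of $p$ is locally the cone over the weights. Effectiveness and the unimodular cone from Step 1 then force the weights to be $-e_{k+1},\dots,-e_n$. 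Hence the normal representation is $\CC^{n-k}$ with the standard action, up to the sign convention.

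\textbf{Step 3: build the equivariant map and extend over $V$.} By the Marle--Guillemin--Sternberg local normal form, a $T$-invariant neighborhood of $T\cdot p$ is equivariantly symplectomorphic to $T\times_{T_p}(\mathfrak t_p^{\circ}\times\CC^{n-k})$. With the trivial splitting, this is exactly $\TT^k\times\RR^k\times\CC^{n-k}$ with $\omega_0$ and the stated action. Two equivariant moment maps for a connected torus differ by a constant, so after translating by $\xi$ the moment maps agree. To globalize over all of $\mu^{-1}(V)$, I would construct the diffeomorphism directly rather than patch local charts:
\begin{itemize}
\item First, $\mu^{-1}(V)$ is a principal $\TT^k$-bundle over the contractible set $V$, hence trivial. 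Choosing a section (with action coordinates $\mu_1,\dots,\mu_k$) gives a Lagrangian-type trivialization $\TT^k\times V$ on which $\omega$ restricts to $\sum d\theta_j\wedge d\mu_j$.
\item Next, identify the symplectic normal bundle with the trivial $\CC^{n-k}$-bundle, using the fixed weights and the contractibility of $V$.
\item Finally, apply the equivariant Weinstein isotropic embedding theorem (or equivariant Moser) along the submanifold $\mu^{-1}(V)$. This yields a $T$-equivariant symplectomorphism from a neighborhood $U$ onto a neighborhood of $\TT^k\times V\times\{0\}$.
\end{itemize}
The moment maps then agree up to a constant, which is fixed by $\xi$.

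\textbf{Main obstacle.} I expect the hardest step to be getting the neighborhood to have the exact product form $\TT^k\times V\times B(\epsilon)^{n-k}$ with a single $\epsilon$. Since $\overline V$ is compact inside the relative interior of $F$, I would shrink to a uniform $\epsilon$ by compactness. Then, because the moment map is proper and its fibers are orbits, the preimage under the model map of $V\times\{\eta: -\epsilon^2/2<\eta_j\le 0\}$ is saturated. Identifying $U$ as the $\mu$-preimage of that set gives the required product form.
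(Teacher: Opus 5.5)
Your argument is correct and is essentially the standard proof behind the result the paper only cites (Delzant's Lemma 2.5, Audin IV.4.21); the paper gives no proof of its own. The steps are: pin down the isotropy weights with the Marle--Guillemin--Sternberg form at a single orbit, trivialize $\mu^{-1}(V)$ over the contractible base, run an equivariant Moser argument along $\mu^{-1}(V)$ (set up over a neighborhood of $\overline V$ in the relative interior of $F$), and use compactness of $\overline V$ to get a uniform $\epsilon$. One point of terminology: $\mu^{-1}(V)$ is a symplectic submanifold (part of the fixed set of $\TT^{n-k}$), not an isotropic one. So the tool you need is the equivariant symplectic neighborhood theorem rather than the isotropic embedding theorem, and your fallback, equivariant Moser, supplies exactly that.
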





\section{Toric Real Loci}
\label{sec:toric_real_loci}

Now we go over the definition and main facts
about \textit{real loci}.
We focus on a toric symplectic $T$-manifold $(M,\omega,\mu)$
and restrict ourselves to real structures compatible with the $T$-action.

\begin{definition}
\label{def:toric_real}
A \textbf{toric real structure} on $(M,\omega,\mu)$ is a map $\tau \colon M \to M$ such that:
\begin{itemize}
\item $\tau$ is an anti-symplectic involution, i.e.\ $\tau^2 = \id_M$ and $\tau^*\omega = -\omega$;
\item the moment map $\mu$ is invariant under $\tau$, i.e.\ $\mu \circ \tau = \mu$.
\end{itemize}
The \textbf{toric real locus} (or simply, \textit{real locus})
of $(M,\omega,\mu,\tau)$ is the fixed-point set
\[
M^\tau \coloneqq \set{p \in M \mid \tau(p) = p}.
\]
\end{definition}

Real loci are lagrangian submanifolds~\cite{Meyer80}.
The compatibility condition between the anti-symplectic involution
and the moment map (second bullet point in \cref{def:toric_real})
may be rephrased in terms of the $T$-action as
\begin{itemize}
\item
for all $g \in T$ and $p \in M$, we have
$\tau(g \cdot p) = g^{-1} \cdot \tau(p)$.
\end{itemize}
This fact was observed in \cite[p.418]{Duistermaat83},
and a detailed proof of it is in \cite[Prop. 3.3]{Camarneiro}.
	
\begin{example}\label{ex:locuscpn}
For $\CP{n}$ viewed as a toric symplectic manifold
$(\CP{n}, \omega_{_\text{FS}}, \mu)$,\footnote{The
\emph{Fubini-Study form} on $\CP{n}$ is
$\omega_{_{\text{FS}}} = \tfrac{i}{2} \partial \bar{\partial} \ln (1+|z|^2)$
with respect to standard charts.
In particular, the sphere $\CP{1}$
has $\omega_{_{\text{FS}}} = \frac 14 \omega_{\mathrm{eucl}}$
and total area $\pi$ with respect to $\omega_{_{\text{FS}}}$.}
the standard toric real structure $\tau \colon \CP{n} \to \CP{n}$
is given by complex conjugation:
\[
   \tau([z_0 : \cdots : z_n]) = [\conj{z_0} : \cdots : \conj{z_n}].
\]
The toric real locus is then $(\CP{n})^\tau = \RP{n}$.
\end{example}

By Delzant's construction~\cite[Section 3]{Delzant88},
any toric symplectic $T$-manifold is isomorphic
to one obtained by symplectic reduction from some complex
symplectic vector space. Hence, in particular,
any toric symplectic $T$-manifold admits an inherited
\textit{complex conjugation}.
Therefore, the previous example generalizes to all toric symplectic manifolds.

\begin{remark}[More General Real Lagrangians]
There are more general \emph{real lagrangians} in
a toric symplectic manifold,
arising as fixed-point sets of antisymplectic involutions
that are not necessarily anti-commuting with the torus action.
Examples of such antisymplectic involutions have
been constructed by Brendel, Kim, and Moon~\cite{BKM23}
from the symmetries of the moment polytope
(our involutions correspond to the trivial symmetry).
The counterparts of these involutions for (algebraic) toric varieties
were investigated earlier by Delaunay~\cite{DelaunayPhD,Delaunay2,Delaunay3}.
\end{remark}

We now establish some fundamental properties of real loci in
our set-up.
Let $\tau$ be a toric real structure on a toric symplectic $T$-manifold,
$(M,\omega,\mu)$.
Since the image by $\tau$ of any fixed point is also
a fixed point and has the same image by $\mu$, it must be the same point.
Therefore, the toric real locus $M^\tau$ contains all the fixed points of the
$T$-action. 
Since we now know that $M^\tau$ is not empty,
by~\cite[Theorem 2.5]{Duistermaat83}, the moment map image of
each of its connected components is the full moment polytope $\Delta$.
Because the preimage of each vertex of $\Delta$ is a singleton,
we conclude that there is only one connected component.
Therefore, the toric real locus $M^\tau$ must be connected and not empty,
and it follows from Duistermaat's convexity
theorem~\cite[p.423]{Duistermaat83} that:

\begin{proposition}[Duistermaat's Convexity in a Toric Setting]
\label{prop:duistermaat}
Let $(M,\omega,\mu,\tau)$ be a toric symplectic
$T$-manifold equipped with a toric real structure $\tau$.
Then the toric real locus $M^\tau$ is a compact connected lagrangian submanifold
with full moment map image, that is,
$\mu (M^\tau) = \mu(M)$.
\end{proposition}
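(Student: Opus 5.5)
The statement bundles four properties of $M^\tau$: it is a compact submanifold, it is lagrangian, it is connected, and it has full moment image. I will establish them in that order, relying on the local normal form and the two results of Duistermaat already quoted.

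\emph{Submanifold and compactness.} Because $\tau$ is an involution, it generates an action of $\ZZ/2$, which is a compact group. Averaging any Riemannian metric over this action gives a $\tau$-invariant metric $g$. Near a point $p\in M^\tau$, the exponential map of $g$ at $p$ intertwines $d\tau_p$ on $T_pM$ with $\tau$ on $M$. The fixed set of $\tau$ near $p$ is therefore the image under $\exp_p$ of the $(+1)$-eigenspace $E_+$ of the linear involution $d\tau_p$, so $M^\tau$ is an embedded submanifold with $T_pM^\tau = E_+$. It is closed in the compact manifold $M$, hence compact.

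\emph{Lagrangian.} Let $E_-$ be the $(-1)$-eigenspace of $d\tau_p$, so that $T_pM=E_+\oplus E_-$. For $u,v\in E_+$, anti-symplecticity gives
\[
\omega(u,v)=\omega(d\tau_p u,d\tau_p v)=-\omega(u,v),
\]
so $E_+$ is isotropic. The same computation shows $E_-$ is isotropic. Since the sum is direct and $\omega_p$ is nondegenerate, each of $E_\pm$ has dimension at most $n$, and the two dimensions add up to $2n$. Hence both have dimension exactly $n$, and $M^\tau$ is lagrangian.

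\emph{Fixed points lie in $M^\tau$.} Take a fixed point $q$ of the $T$-action. Since $\mu\circ\tau=\mu$, the point $\tau(q)$ lies in $\mu^{-1}(\mu(q))$. That fiber is a single $T$-orbit by the Delzant facts, and it is $\{q\}$ because $\mu(q)$ is a vertex. So $\tau(q)=q$, and in particular $M^\tau\neq\emptyset$.

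\emph{Full image and connectedness.} This step depends most heavily on outside input. By Duistermaat's theorem~\cite[Theorem 2.5]{Duistermaat83}, any nonempty connected component $C$ of $M^\tau$ satisfies $\mu(C)=\mu(M)=\Delta$. This already gives full image. For connectedness, each component then contains a point over a chosen vertex $v$. The fiber $\mu^{-1}(v)$ is a single point, so all components share that point and must coincide.

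If one wants to avoid citing Duistermaat for the full-image claim, a local alternative is to work in the normal form of \cref{lem:delzant_normal_form}. Using the relation $\tau(g\cdot p)=g^{-1}\cdot\tau(p)$, one would show that $\tau$ is conjugate there to $(\theta,\mu,z)\mapsto(-\theta+c,\mu,\bar z\cdot e^{i\alpha})$. The fixed set of this map meets every fiber over $V$, so $\mu(M^\tau)$ is open and closed in each open face. Since $\mu(M^\tau)$ contains every vertex, an induction over faces of increasing dimension shows it meets, and hence fills, every face. The main obstacle in this alternative is making the conjugation of $\tau$ to that model rigorous, and for that reason the plan relies on the cited theorem of Duistermaat.
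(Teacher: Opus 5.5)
Your proposal is correct and follows essentially the same route as the paper: fixed points of the $T$-action lie in $M^\tau$ because their $\mu$-fibers are singletons, Duistermaat's Theorem 2.5 then gives each connected component full image $\Delta$, and the singleton fiber over a vertex forces connectedness. The only difference is that you prove the compact-submanifold and lagrangian properties directly, via an averaged invariant metric and the $\pm 1$-eigenspace argument, whereas the paper simply cites Meyer for them.
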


Next, we see that the subgroup of $T$ of all elements of order $2$
acts on $M^\tau$.
Following \cite{CannasKarshon},\footnote{In \cite{CannasKarshon}, this
subgroup is denoted $\sqrt{\{ \one \}}$ to be consistent
with the general case $\sqrt{H}$ for a subtorus $H \subseteq T$.}
we denote this subgroup by
\[
   \sqrt{\one} \coloneqq \set{g \in T \mid g^2 = \one},
\]
where $\one$ denotes the identity element in $T$.
Any chosen isomorphism to the standard $n$-torus, $T \cong \TT^n$, exhibits
$\sqrt{\one} \cong \{-1,1\}^n$ as a finite subgroup of order $2^n$.

\begin{lemma}\label{lem:realtorusorbit}
Let $(M,\omega,\mu,\tau)$ be a toric symplectic $T$-manifold
equipped with a toric real structure $\tau$.
Then, for each $T$-orbit $\cO$ in $M$,
the intersection $M^\tau \cap \cO$ is exactly one (nonempty) $\sqrt{\one}$-orbit.
\end{lemma}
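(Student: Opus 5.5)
Fix a $T$-orbit $\cO = \mu^{-1}(\xi)$ with $\xi \in \Delta$, using that the fibers of $\mu$ are exactly the $T$-orbits. The plan has three steps: show that $M^\tau \cap \cO$ is nonempty, that it is $\sqrt{\one}$-invariant, and that $\sqrt{\one}$ acts transitively on it. The only tool beyond earlier results is the compatibility relation $\tau(g\cdot p) = g^{-1}\cdot\tau(p)$ for all $g \in T$ and $p \in M$, recorded after \cref{def:toric_real}.

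\emph{Nonemptiness.} By \cref{prop:duistermaat}, $\mu(M^\tau) = \mu(M) = \Delta$. Hence there is $p \in M^\tau$ with $\mu(p) = \xi$, that is, $p \in M^\tau \cap \cO$.

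\emph{Invariance.} Let $p \in M^\tau$ and $g \in \sqrt{\one}$. Then
\[
\tau(g\cdot p) = g^{-1}\cdot \tau(p) = g^{-1}\cdot p = g \cdot p,
\]
since $g^{-1}=g$. So $\sqrt{\one}$ preserves $M^\tau$, and it preserves $\cO$ trivially. Therefore $\sqrt{\one}\cdot p \subseteq M^\tau \cap \cO$.

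\emph{Transitivity.} This is the main step. Let $p, q \in M^\tau \cap \cO$, and write $q = h\cdot p$ with $h \in T$. Applying $\tau$ gives
\[
h\cdot p = q = \tau(q) = h^{-1}\cdot \tau(p) = h^{-1}\cdot p,
\]
so $h^2 \in T_p$. By the facts from~\cite{Delzant88} recalled in \cref{sec:toric_symplectic_framework}, $T_p$ is a subtorus $H$ of $T$. We need to adjust $h$ by an element of $H$ so that it lands in $\sqrt{\one}$, without changing $q = h\cdot p$.

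Since $H$ is a torus, it is divisible, so there is $k \in H$ with $k^2 = h^2$. Put $g = hk^{-1}$. Because $T$ is abelian, $g^2 = h^2k^{-2} = \one$, so $g \in \sqrt{\one}$. Moreover $g\cdot p = h\cdot(k^{-1}\cdot p) = h\cdot p = q$.

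Thus $M^\tau\cap\cO = \sqrt{\one}\cdot p$ is a single nonempty $\sqrt{\one}$-orbit, which proves \cref{lem:realtorusorbit}. The only subtle point is the fact that the stabilizers are connected subtori, which is exactly what allows the square root $k$ to be taken inside $T_p$.
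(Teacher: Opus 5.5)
Your proof is correct and follows the paper's proof almost step for step. You show $\sqrt{\one}$-invariance of $M^\tau$ via $\tau(g\cdot p)=g^{-1}\cdot\tau(p)$, get transitivity from $h^2\in T_p$ and a square root inside the connected stabilizer subtorus, and get nonemptiness from Duistermaat's $\mu(M^\tau)=\mu(M)$ together with the fibers of $\mu$ being the $T$-orbits.
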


\begin{proof}
We begin by checking that, for each $g \in \sqrt{\one}$ and $p \in M^\tau$, we have $g \cdot p \in M^\tau$. This shows that each $M^\tau \cap \cO$ is a union of $\sqrt{\one}$-orbits. Indeed, we have that
		\[\tau(g \cdot p) = g^{-1} \cdot \tau(p) = g^{-1} \cdot p = g \cdot p.\]
		
Now, suppose that $p,q \in M^\tau$ and $g \in T$ are such that $q = g \cdot p$. We will show that there exists $g' \in \sqrt{\one}$ such that $q = g' \cdot p$. Indeed, we can see that
		\[q = \tau(q) = \tau(g \cdot p) = g^{-1} \cdot \tau(p) = g^{-1} \cdot p,
        \quad \text{ hence } \quad
        g^2 \cdot p = g \cdot q = p,
        \]
        i.e., $g^2$ is in the isotropy group of $p$, $T_p$,
        which we know is a subtorus of $T$.
Hence, we can find $h \in T_p$ such that $h^2 = g^2$.
Take $g' \coloneqq gh^{-1}$.
It follows that $q = g' \cdot p$.
Moreover, $(g')^2 = g^2 h^{-2} = \one$, and thus $g' \in \sqrt{\one}$.
		
Thus far, we can conclude that each intersection $M^\tau \cap \cO$
is either empty or exactly one $\sqrt{\one}$-orbit.
However, by Duistermaat, $\mu(M^\tau) = \mu(M)$, and by Delzant,
each fiber of $\mu$ is exactly one $T$-orbit; hence,
 $M^\tau$ intersects every $T$-orbit.
\end{proof}

Moreover, the toric real locus determines the toric real structure
and is unique up to diffeomorphism:

\begin{lemma}\label{lem:locus_determines_tau}
Let $\tau$ and $\tau'$ be two toric real structures on $(M,\omega,\mu)$.
If the corresponding toric real loci are equal, $M^\tau = M^{\tau'}$,
then $\tau = \tau'$.
\end{lemma}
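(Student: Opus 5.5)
The plan is to use the equivariance property $\tau(g\cdot p)=g^{-1}\cdot\tau(p)$, noted right after \cref{def:toric_real}, together with the fact that $M^\tau$ meets every $T$-orbit (\cref{lem:realtorusorbit}). Write $L \coloneqq M^\tau = M^{\tau'}$. Every point $q\in M$ lies in some $T$-orbit, and that orbit meets $L$ by \cref{prop:duistermaat} and Delzant's description of the fibers of $\mu$. So we can write $q = g\cdot p$ with $p\in L$ and $g\in T$.

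Then compute both involutions at $q$. Using the equivariance property for $\tau$ and $\tau'$,
\[
\tau(q) = \tau(g\cdot p) = g^{-1}\cdot\tau(p) = g^{-1}\cdot p,
\qquad
\tau'(q) = \tau'(g\cdot p) = g^{-1}\cdot\tau'(p) = g^{-1}\cdot p .
\]
The middle equalities hold because $p$ is fixed by both $\tau$ and $\tau'$. Hence $\tau(q)=\tau'(q)$ for all $q\in M$, and so $\tau=\tau'$.

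One point needs checking: the right-hand side must not depend on the choice of $(g,p)$. It does not, because we only compare two maps that are already well defined on $M$ and evaluate both on the same representation $q=g\cdot p$. So no well-definedness issue arises.

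The only substantive input is the anti-equivariance identity $\tau(g\cdot p)=g^{-1}\cdot\tau(p)$. The paper states this as a consequence of $\mu\circ\tau=\mu$ and anti-symplecticity, citing \cite{Duistermaat83,Camarneiro}, and we assume it here. If one wanted to avoid that citation, this is where the real work would be. For $X\in\ft$, one would check from $\tau^*\omega=-\omega$ and $\mu\circ\tau=\mu$ that $\tau_*X^\# = -X^\#$. One would then integrate the flows to get $\tau\circ\exp(tX) = \exp(-tX)\circ\tau$. Since the paper already records the identity as a known fact, the argument above is complete as stated.
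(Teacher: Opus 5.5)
Your proof is correct and follows the paper's proof: write each point as $g\cdot p$ with $p$ in the common real locus, and use anti-equivariance to get $\tau(g\cdot p)=g^{-1}\cdot p=\tau'(g\cdot p)$. The only difference is that you justify the surjectivity onto orbits directly from \cref{prop:duistermaat} and Delzant's fiber description, whereas the paper cites \cref{lem:realtorusorbit}, which rests on the same facts.
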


\begin{proof}
By \cref{lem:realtorusorbit}, any $p \in M$ is of the form $p = g \cdot p_0$ for some
    $g \in T$ and $p_0 \in M^\tau$.
    Hence, it must be
    $\tau (p) = \tau (g \cdot p_0) = g^{-1} \cdot \tau (p_0) = g^{-1} \cdot p_0$.
    This determines $\tau$ and, similarly, $\tau'$.
\end{proof}

\begin{remark}[Uniqueness of Toric Real Locus]
\label{rmk:toric_lagrangian}
    It follows from \Cref{lem:realtorusorbit} that the $T$-action on $M$
    restricts to a $\sqrt{\one}$-action on $M^\tau$.
    Moreover, $M^\tau$ is a \emph{real toric lagrangian} in the sense of
    \cite{CannasKarshon}.
    Given any other toric real structure $\tau'$ on $(M,\omega,\mu)$,
    there is an isomorphism $\varphi : M \to M$
    with $\varphi(M^\tau)  = M^{\tau'}$ by~\cite{CannasKarshon}.
    Then, by \Cref{lem:locus_determines_tau}, we have $\varphi^* \tau' = \tau$.
    In other words, a toric real locus (as well as a toric real structure)
    is unique up to isomorphism of the ambient toric symplectic manifold.
\end{remark}

\begin{remark}[Toric Real Locus of Product]
    \label{rmk:product}
    As a consequence of \Cref{rmk:toric_lagrangian}, a toric real locus in a
product of toric symplectic manifolds is diffeomorphic to the
product of toric real loci in the factors.
Indeed, if $\tau_1,\tau_2$ are toric real structures on $M_1, M_2$
and $M = M_1 \times M_2$, then $\tau : M \to M$ defined by
$\tau (p_1,p_2) := (\tau_1 (p_1) , \tau_2 (p_2))$ is a toric
real structure on $M$ and has toric real locus
$M^\tau = M_1^{\tau_1} \times M_2^{\tau_2}$.
\end{remark}

\begin{remark}[Toric Real Locus of Toric Submanifold]
    \label{rmk:real_locus_of_submfld}
Let $(M,\omega,\mu,\tau)$ be a toric symplectic $T$-manifold
equipped with a toric real structure, and let $N \subset M$
be a toric symplectic submanifold.
Then $\tau$ restricts to a toric real structure on $N$,
$\tau|_{_N} : N \to N$, with
a toric real locus $N^\tau := M^\tau \cap N$.
\end{remark}

Next, we upgrade the local normal form in \cref{lem:delzant_normal_form}
to take into account a toric real structure.
The following result is due to
Delzant~\cite[Lemma 4.4 of page 333]{Delzant88}, building on
previous normal forms, including Duistermaat's~\cite[Prop. 2.2]{Duistermaat83}.

\begin{proposition}
\label{lem:delzant_normal_form_real}
In the conditions of \cref{lem:delzant_normal_form},
given a toric real structure $\tau$ on $(M,\omega,\mu)$,
the equivariant symplectomorphism
\[
   \Phi : U \longrightarrow \TT^k \times V \times B(\epsilon)^{n-k}
\]
intertwining the moment maps takes $M^\tau \cap U$ onto
\[
   \{ (\pm 1, \dots, \pm 1, \mu_1, \dots, \mu_k, x_1, \dots, x_{n-k}) \mid
   (\mu_1, \dots, \mu_k) \in V, x_j \in B(\epsilon) \cap \RR,
   j=1,\ldots,n-k \}.
\]
\end{proposition}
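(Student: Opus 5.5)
The plan is to transport $\tau$ to the local model and compare it with the obvious toric real structure there. A caveat first: the statement cannot hold for an arbitrary $\Phi$. If $g \in T$ is not in $\sqrt{\one}$, then $g\circ\Phi$ is still an equivariant symplectomorphism intertwining the moment maps, but it moves the standard set off itself. So the claim to prove is that $\Phi$ can be chosen (after composing it with a suitable equivariant, moment-preserving symplectomorphism of the model) so that it has the stated property.

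\emph{Step 1: the model conjugation.} On $\TT^k\times V\times B(\epsilon)^{n-k}$ consider
\[
\tau_0(e^{i\theta},\mu,z)=(e^{-i\theta},\mu,\bar z).
\]
One checks directly from the formulas in \cref{lem:delzant_normal_form} that:
\begin{itemize}
\item $\tau_0$ is an anti-symplectic involution;
\item $\mu_0\circ\tau_0=\mu_0$;
\item its fixed set is exactly the set in the statement, since $e^{i\theta_j}=\pm1$ and $z_j\in\RR$.
\end{itemize}

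\emph{Step 2: compare $\tau$ with $\tau_0$.} Set $\tau'=\Phi\tau\Phi^{-1}$, which is a toric real structure on the model. Both $\tau'$ and $\tau_0$ satisfy $\tau(g\cdot p)=g^{-1}\cdot\tau(p)$. Hence $\psi:=\tau'\circ\tau_0$ has the following properties:
\begin{itemize}
\item it is a symplectomorphism;
\item it commutes with $T$;
\item it preserves $\mu_0$.
\end{itemize}
Such maps preserve every $T$-orbit. I would then invoke the known description of these maps on the model (as in Delzant's Lemma 4.4, or via Schwarz's theorem on smooth invariant functions). The description is that, after shrinking $V$ and $\epsilon$ if necessary, $\psi$ is the time-one flow of $F\circ\mu_0$ for some smooth function $F$ on a neighborhood of $\mu_0(U)$. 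That is, $\psi(p)=\exp(dF_{\mu_0(p)})\cdot p$.

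This step is the main obstacle. On the open dense free part the description follows from an elementary argument: a $T$-equivariant, $\mu_0$-preserving symplectomorphism there is fiberwise translation by a closed $1$-form on a simply connected base, hence by an exact form $dF$. The difficulty is showing that $F$ extends smoothly across the strata $z_j=0$, where the isotropy jumps. I would handle this in the coordinates $\abs{z_j}^2$ by a Schwarz-type smoothness argument, or simply quote Delzant's lemma for it.

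\emph{Step 3: take a square root.} Let $\varphi(p)=\exp\bigl(\tfrac12 dF_{\mu_0(p)}\bigr)\cdot p$, the time-$\tfrac12$ flow of $F\circ\mu_0$. Then $\varphi$ is again an equivariant symplectomorphism preserving $\mu_0$. Because $\tau_0$ inverts the torus action, we get
\[
\varphi\,\tau_0\,\varphi^{-1}(p)=\exp\bigl(\tfrac12 dF\bigr)\exp\bigl(\tfrac12 dF\bigr)\cdot\tau_0(p)=\psi\tau_0(p)=\tau'(p).
\]
Replacing $\Phi$ by $\varphi^{-1}\circ\Phi$ therefore conjugates $\tau$ to $\tau_0$. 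The new map is still a $T$-equivariant symplectomorphism intertwining the moment maps.

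\emph{Conclusion.} Under the new $\Phi$, the set $M^\tau\cap U$ is carried onto the fixed set of $\tau_0$. By Step 1, this is precisely the set in the statement.
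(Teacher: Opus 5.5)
Your proposal is correct, and it takes a genuinely different route from the paper. The paper gives no argument of its own here and simply cites Delzant's Lemma~4.4 (building on Duistermaat's normal form). You instead take the $T$-normal form of \cref{lem:delzant_normal_form} as a black box and correct $\Phi$ by an automorphism of the model.

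Your caveat is right: the statement has to be read as ``$\Phi$ can be chosen so that''. Your stronger conclusion $\Phi\tau\Phi^{-1}=\tau_0$ is exactly what the paper uses without comment in the proof of \cref{prop:branched_covering}.

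What your route buys is modularity. The only extra input is the structure of the automorphism sheaf of the local model: equivariant $\mu_0$-preserving symplectomorphisms are $p\mapsto\exp(dF_{\mu_0(p)})\cdot p$, as in Haefliger--Salem, Lerman--Tolman, or Karshon--Lerman. After that, the square-root trick works because $\tau_0$ inverts the action. The same description also shows that the admissible choices of $\Phi$ differ exactly by translations by elements of $\sqrt{\one}$, which sharpens your caveat.

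The cost is that all the technical content sits in that automorphism lemma, namely the smoothness of $F$ across $z_j=0$, which you outsource. The Schwarz route you suggest does work. Each $z_j\circ\psi$ is a smooth covariant of weight $e_{k+j}$, hence of the form $z_j h_j$ with $h_j$ smooth in $\mu_0$ and $\abs{h_j}=1$. The $\TT^k$-coordinates behave likewise. This gives $\psi=\exp(\phi\circ\mu_0)$ with $\phi$ smooth up to the boundary. The symplectic condition on the dense free part then makes $\phi$ closed.

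Finally, you do not need to shrink $V$, and the statement, with $V$ prescribed, does not strictly allow it. The set $\mu_0(U)$ is convex (the ball $V$ times a box $(-\epsilon^2/2,0]^{n-k}$), so the choice of logarithm and the Poincar\'e lemma work on all of it.
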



We are now in a position to interpret the
restriction $\mu\vert_{M^\tau}: M^\tau \to \Delta$
as a branched covering of the moment polytope,
where $\sqrt{\one}$ is its group of deck transformations
and there are $2^n$ sheets.

\begin{remark}[Historical Context]
Viewing this restriction of the moment map as a branched covering
goes back to at least Gelfand, Kapranov, and
Zelevinsky~\cite[Ch.11, Theorem 5.4]{GelfandKapranovZelevinsky}, and
Guillemin~\cite[p.286]{Guillemin94}.
Moreover, this fits into the concept of a \emph{small cover}
introduced by Davis and Januszkiewicz~\cite{DavisJanuszkiewicz91}.
Donaldson~\cite[\S 2.2.4]{Donaldson} used this branched covering
to obtain riemannian metrics on $M^\tau$.
When $n=2$, Abreu and Gadbled~\cite{AbreuGadbled17} coupled
the assembly of $M^\tau$ from four copies of $\Delta$ with
surgery techniques to provide constructions of new lagrangian submanifolds
in $\CC\PP^2$ and in $\CC\PP^1 \times \CC\PP^1$.
\end{remark}

\begin{proposition}[Toric Real Locus as Branched Covering of Moment Polytope]
\label{prop:branched_covering}
Let $(M, \omega, \mu)$ be a toric symplectic $\TT^n$-manifold
with moment polytope $\Delta$,
let $\tau$ be a toric real structure on it, and let $M^\tau$ be its toric real locus.
Then there exists a $\sqrt{\one}$-equivariant\footnote{The group
$\sqrt{\one}$ acts on $\Delta \times \sqrt{\one}$
by multiplication in the second factor and on $M^\tau$ as a consequence
of \cref{lem:realtorusorbit}.}
surjective continuous map
\[
   q : \Delta \times \sqrt{\one} \longrightarrow M^\tau,
\]
that makes the following diagram commute
\[
\begin{tikzcd}
   {\Delta \times \sqrt{\one}}
   \arrow["q"', from=1-1, to=2-1]
   \arrow["pr", from=1-1, to=2-2]\\
   M^\tau \arrow["\mu"', from=2-1, to=2-2] & \Delta
\end{tikzcd}
\]
where $pr : \Delta \times \sqrt{\one} \to \Delta$ denotes
the natural projection onto the first factor.
Over the interior $\Delta^\orm$, the quotient map $q$ restricts to a
$\sqrt{\one}$-equivariant diffeomorphism
\[
   \Delta^\orm \times \sqrt{\one} \stackrel{\sim}{\longrightarrow}
   M^\tau \cap \mu^{-1}(\Delta^\orm).
\]
\end{proposition}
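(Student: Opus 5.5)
The plan is to build $q$ from a continuous section of $\mu|_{M^\tau}$ over $\Delta$, and then translate it by the $\sqrt{\one}$-action.

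\emph{Step 1: a section over the whole polytope.} I will construct a continuous map $s\colon \Delta \to M^\tau$ with $\mu\circ s = \id_\Delta$. Then I set
\[
q(\xi,g) \coloneqq g\cdot s(\xi).
\]
This $q$ is automatically $\sqrt{\one}$-equivariant, since $h\cdot q(\xi,g) = (hg)\cdot s(\xi) = q(\xi,hg)$. It is continuous because the $T$-action is smooth and $\sqrt{\one}$ is discrete. The diagram commutes because $\mu$ is $T$-invariant. Surjectivity follows from \cref{lem:realtorusorbit}: for $p \in M^\tau$, the points $p$ and $s(\mu(p))$ lie in the same $T$-orbit $\mu^{-1}(\mu(p))$, so they differ by an element of $\sqrt{\one}$.

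\emph{Step 2: constructing $s$ via a global model.} The cleanest route uses Delzant's construction together with the uniqueness in \cref{rmk:toric_lagrangian}. We may assume that $M = \mu_{\CC^N}^{-1}(c)/K$ carries the complex conjugation $\tau_0$. By \cref{rmk:toric_lagrangian}, $M^\tau$ is carried onto $M^{\tau_0}$ by an isomorphism intertwining $\mu$, so it suffices to treat $\tau_0$.

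In the model, the points of the level set with all coordinates $z_j \in \RR_{\ge 0}$ form a closed subset $Z$. This subset consists of real points, and it is mapped homeomorphically onto $\Delta$. Indeed, $\xi \in \Delta$ determines the values $|z_j|^2 = 2(\langle \xi, u_j\rangle + \lambda_j)$ through the facet data, so
\[
z_j = \sqrt{2(\langle \xi,u_j\rangle+\lambda_j)},
\]
which is continuous in $\xi$. Thus I define $s(\xi)$ as the class of this point. Its image lies in $M^{\tau_0}$ because complex conjugation fixes real coordinates.

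\emph{Step 3: the interior.} Over $\Delta^\orm$, the $T$-action on $\mu^{-1}(\Delta^\orm)$ is free by the isotropy description. So the $\sqrt{\one}$-action on $M^\tau\cap\mu^{-1}(\Delta^\orm)$ is free, and by \cref{lem:realtorusorbit} each fiber contains exactly $2^n$ points. Hence $q$ restricted to $\Delta^\orm\times\sqrt{\one}$ is a continuous bijection onto $M^\tau\cap\mu^{-1}(\Delta^\orm)$.

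To show it is a diffeomorphism, I apply \cref{lem:delzant_normal_form_real} with $k=n$ around any interior point. Locally $M^\tau$ is $\{\pm1\}^n\times V$ and $\mu$ is the projection to $V$, so $\mu|_{M^\tau}$ is a local diffeomorphism there. Since $\mu\circ q = pr$, the map $q$ restricted to each sheet $\Delta^\orm\times\{g\}$ is a smooth local inverse of $\mu$. Therefore $q$ is a smooth bijective local diffeomorphism, hence a diffeomorphism.

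\emph{Main obstacle.} I expect the hard part to be the continuity of the section up to the boundary, that is, producing $s$ globally rather than only locally. The local normal forms of \cref{lem:delzant_normal_form_real} give sections $\xi\mapsto(1,\dots,1,\xi,\sqrt{-2\xi_{k+1}},\dots)$ near each face, but these choices must be patched consistently. The Delzant-model argument of Step 2 avoids this patching, at the cost of invoking the quotient construction and the uniqueness remark. The one point to verify there is that $s$ is continuous into the quotient, which holds because the square-root formulas are continuous on $\Delta$ and the quotient map is continuous.
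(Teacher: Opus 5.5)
Your argument is correct, but it obtains the key ingredient, a continuous section of $\mu|_{M^\tau}$ over all of $\Delta$, by a different route from the paper.

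The paper stays intrinsic. It takes a connected component $P$ of $M^\tau\cap\mu^{-1}(\Delta^\orm)$ and uses the normal form of \cref{lem:delzant_normal_form_real}. Near a point over a $k$-face, $\overline{P}$ then looks like $V\times[0,\epsilon)^{n-k}$, with $\mu$ given by $r_j\mapsto-\tfrac12 r_j^2$ in the normal directions. From this it deduces that $\mu|_{\overline{P}}\colon\overline{P}\to\Delta$ is a proper local homeomorphism. It then invokes the Hadamard--Caccioppoli theorem, using that $\Delta$ is simply connected, to get a global homeomorphism, and defines $q$ sheetwise as $(\mu|_{\sigma\cdot\overline{P}})^{-1}$. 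So the patching problem you flag is solved there by a covering-space argument rather than avoided.

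Your route writes the section down explicitly in the Delzant quotient and transports it to an arbitrary $\tau$ via \cref{rmk:toric_lagrangian}. In that model, $s(\Delta)$ is exactly the closure of the component on which all $z_j>0$, so both constructions give the same $q$ up to relabeling sheets. Your approach has three advantages:
\begin{itemize}
\item an explicit formula that makes the square-root folding visible;
\item continuity for free;
\item it sidesteps checking that $\overline{P}$ meets a small neighborhood of a boundary point in only one of the $2^{n-k}$ adjacent local orthant pieces, a point the paper leaves implicit.
\end{itemize}
The cost is reliance on the global Delzant construction. More substantially, you depend on the uniqueness of toric real structures up to isomorphism, which rests on the external result of Cannas da Silva--Karshon. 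The paper's proof needs only the local normal form for the given $\tau$.

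A minor convention point: with the paper's outward normals $u_j$ and $\Delta=\{x\mid\langle x,u_j\rangle\le\lambda_j\}$, the radicand should read $2(\lambda_j-\langle\xi,u_j\rangle)$. Your formula corresponds to inward normals.
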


\begin{proof}
Let $P$ be any connected component of $M^\tau \cap \mu^{-1}(\Delta^\orm)$.
We will show that $\mu\vert_{\clos{P}} : \clos{P} \to \Delta$
is a proper map and a local homeomorphism, to conclude that
it is a global homeomorphism by the Hadamard-Caccioppoli Theorem (using
the fact that $\Delta$ is simply connected; see, for instance,~\cite[Thm.\ 0.1]{DeMarco94}).
Properness follows immediately from the fact that $\clos{P}$ is compact
and $\Delta$ is Hausdorff.
To show that it is a local homeomorphism, we use
the local normal form for $\mu$ and $\tau$ from
\cref{lem:delzant_normal_form,lem:delzant_normal_form_real}.

First, we use an equivariant symplectomorphism $\Phi$ from
\cref{lem:delzant_normal_form,lem:delzant_normal_form_real}
to see what $M^\tau$ looks like (``looks like'' means ``becomes,
under conjugation by $\Phi$'').
Let $p \in M^\tau$ and let $F$ be the face of $\Delta$ whose relative interior
contains $\mu(p)$.  Then, in a neighborhood of $p$ in $M$,
the moment map looks like
$\mu : \TT^k \times V \times B(\epsilon)^{n-k} \to \RR^n$
given by
\[
\mu(g_1, \dots, g_k, \mu_1, \dots, \mu_k, z_1, \dots, z_{n-k}) =
\xi + \paren*{\mu_1, \dots, \mu_k,
-\tfrac{1}{2}\abs{z_1}^2, \dots, -\tfrac{1}{2}\abs{z_{n-k}}^2},
\]
where $V$ is an open ball in the interior of $F$,
and the involution
$\tau : \TT^k \times V \times B(\epsilon)^{n-k}
\to \TT^k \times V \times B(\epsilon)^{n-k}$ is given by
\[
\tau(g_1, \dots, g_k, \mu_1, \dots, \mu_k, z_1, \dots \, , z_{n-k}) =
(g_1^{-1}, \dots, g_k^{-1}, \mu_1, \dots, \mu_k, \conj{z_1}, \dots, \conj{z_{n-k}}).
\]
Hence, in these coordinates, the toric real locus locally looks like
\[
   \{-1,1\}^k \times V \times (-\epsilon,\epsilon)^{n-k}
   \; \text{ inside } \; \T^k \times V \times B(\epsilon)^{n-k},
\]
where $(-\epsilon,\epsilon)$ is the real interval in $B(\epsilon)$.
By recalling the torus action in these coordinates, we see that the points lying over the interior $\Delta^\orm$ are those whose
coordinates $z_1, \dots, z_{n-k}$ are all non-zero,
since these are the points with a trivial isotropy group.
Hence, the portion of the toric real locus lying over the interior $\Delta^\orm$
is given locally by
\[
   \{-1,1\}^k \times V \times ((-\epsilon,0) \cup (0, \epsilon))^{n-k},
\]
i.e., $2^n$ connected components, which are open neighborhoods
in ${M^\tau \cap \mu^{-1}(\Delta^\orm)}$.

From the above, taking the closure of one of these connected components, we conclude that $\clos{P}$ locally looks like
$V \times \left[0,\epsilon\right)^{n-k}$,
with the restricted moment map locally given by
\begin{eqnarray*}
\mu : V \times \left[0,\epsilon\right)^{n-k} & \longrightarrow & \RR^n,\\
(\mu_1, \dots, \mu_k, r_1, \dots, r_{n-k}) & \longmapsto &
\paren*{\mu_1, \dots, \mu_k, -\tfrac{1}{2} r_1^2 , \dots, -\tfrac{1}{2} r_{n-k}^2}.
\end{eqnarray*}
This is, in fact, a homeomorphism to its image,
and a diffeomorphism only when $k = n$.

By invariance of $\mu$, it follows that each map
$\mu_\sigma: \sigma \cdot \clos{P} \to \Delta \times \{ \sigma \}$
defined by $\mu_\sigma (p) := (\mu (p), \sigma)$ for $\sigma \in \sqrt{\one}$
is a homeomorphism.

Now we are able to define a $\sqrt{\one}$-equivariant surjective
continuous map $q: \Delta \times \sqrt{\one} \to M^\tau$ by
\[
   q\vert_{\Delta \times \{ \sigma \}} = \paren{\mu_{\sigma}}^{-1},
\]
for each $\sigma \in \sqrt{\one}$, which satisfies $\mu \circ q = pr$.

The fact that
\[
   \mu\vert_{P} : P \longrightarrow \Delta^\orm
\]
is a homeomorphism and a local diffeomorphism (we are in the case $k = n$)
implies that it is a diffeomorphism.
By symmetry, it follows that for each $\sigma \in \sqrt{\one}$, the restriction
\[
   q\vert_{\Delta^\orm \times \{ \sigma \}} : \Delta^\orm \times \{ \sigma \}
   \longrightarrow \sigma \cdot P
\]
is a diffeomorphism.
Concatenating these, we obtain the diffeomorphism
\[
\Delta^\orm \times \sqrt{\one} \stackrel{\sim}{\longrightarrow}
\bigcupdot_{\sigma \in \sqrt{\one}} \sigma \cdot P = M^\tau \cap \mu^{-1}(\Delta^\orm),
\]
which is $\sqrt{\one}$-equivariant by construction.
\qedhere
\end{proof}

\begin{remark}[Homeomorphism, not Diffeomorphism]
It follows from the previous proof that, for $n>0$,
$q : \Delta \times \sqrt{\one} \longrightarrow M^\tau$ is
not smooth\footnote{Our convention is that a map from
$\Delta \subset \RR^n$ to a manifold
be called \textit{smooth} when it can be locally extended to a smooth map
on an open subset of $\RR^n$.}
along the boundary of $\Delta$ because of the squaring terms
in the local normal form when $k < n$.
\end{remark}


\section{Polytope Kaleidoscopes}
\label{sec:kaleidoscopes}

We now work towards a description of toric real loci
as quotients of contractible polyhedral sets in $\RR^n$
under a suitable identification of pairs of facets.
By choosing these sets in a preferred standard position, it becomes
easier to assess their orientability, compute their Euler characteristic,
and determine them, especially in lower dimensions.
In this section, we focus on the relevant quotients of polyhedral sets,
whereas in the next section, we prove the homeomorphism to the
toric real loci.

Throughout this section, we work in $\RR^n$ (later interpreted
as the dual of the Lie algebra of the standard torus
$\TT^n \coloneqq S^1 \times \ldots \times S^1$),
where we consider a unimodular polytope $\Delta$.

\begin{definition}
\label{def:standard_vertex}
A unimodular polytope in $\RR^n$ has a \textbf{standard vertex at the origin}
if it has a vertex at the origin and the $n$ edges
incident to the origin lie along the positive coordinate axes.
A unimodular polytope in $\RR^n$ has a \textbf{standard vertex}
if, after some translation, it has
a standard vertex at the origin.
\end{definition}

\begin{notation}
\label{notation:cluster}
If \(\Delta\) has a standard vertex at the origin,
we denote by $\Delta_{(\sigma_1, \ldots, \sigma_n)}$ the image of
$\Delta$ by the coordinate reflection given by the diagonal matrix
with diagonal entries $\sigma_1, \ldots, \sigma_n$,
where each $\sigma_k$ is either $+1$ or $-1$.
The $n$-tuple $(\sigma_1, \ldots, \sigma_n) \in \{ \pm 1 \}^n$
may be viewed as an element of the finite group $\sqrt{\one} \cong \{ \pm 1 \}^n$,
or as a coordinate reflection, or as the \emph{orthant} in $\RR^n$
where $\Delta_{(\sigma_1, \ldots, \sigma_n)}$ lies.
We say that an orthant $(\sigma_1, \ldots, \sigma_n)$
is \textbf{positive} (resp.\ \textbf{negative}), when
$\sigma_1 \cdot \ldots \cdot \sigma_n$ is positive (resp.\ negative).
This sign is that of the determinant of the corresponding reflection.
For short, we often write only the signs $+$ or $-$.
In particular, $\Delta_{(+,+,\ldots,+)} = \Delta_{\one} = \Delta$,
and $\Delta_{(-,+,\ldots,+)}$, being the image of $\Delta$
under the reflection of $\RR^n$ flipping
just the first coordinate, lies in a negative orthant.
\end{notation}

\begin{definition}
Let $\Delta$ be a unimodular polytope with a standard vertex at the origin.
The \textbf{$\boldsymbol{\Delta}$-cluster} is the union of all
the $2^n$ polytopes $\Delta_{(\sigma_1, \ldots, \sigma_n)}$
defined in~\cref{notation:cluster}.
\end{definition}

By the assumption of a standard vertex at the origin,
each of the $\Delta_{(\sigma_1, \ldots, \sigma_n)}$ has $n$ facets
along the coordinate hyperplanes.
While merging these in the union, those facets
get pairwise superimposed and engulfed into the interior of the
$\Delta$-cluster.
The $\Delta$-cluster is a star-shaped polytope (as its
entire boundary is visible from the origin), yet it is not necessarily convex.
An example of a $\Delta$-cluster is the polyhedron in \cref{fig:kaleidoscope_orientability} with the arrows ignored.

\begin{notation}
\label{notation:kaleidoscope}
Given $\sigma := (\sigma_1, \ldots, \sigma_n) \in \{ \pm 1 \}^n \cong \sqrt{\one}$,
and $u = (w_1, \ldots, w_n) \in \ZZ^n$, then
  \begin{align*}
  (-1)^u & \; \mbox{ denotes the orthant } \;
  \left( (-1)^{w_1}, \ldots, (-1)^{w_n} \right)
  \text{ also viewed as an element of $\sqrt{\one}$, and} \\
  (-1)^u \sigma & \; \mbox{ denotes the product} \;
  \left( (-1)^{w_1}\sigma_1, \ldots, (-1)^{w_n}\sigma_n \right)
  \mbox{ in the group } \sqrt{\one}.
  \end{align*}
\end{notation}

For example, if $\sigma = (+,-,-)$ and $u=(1,2,3)$, then
$(-1)^u=(-,+,-)$ and $(-1)^u \sigma = (-,-,+)$.
An orthant $(-1)^{u}$ is positive if and only if
$u$ has an even number of odd entries; otherwise, it is negative.
Equivalently, the orthant $(-1)^{u}$ is positive if and only if
the componentwise mod 2 reduction of $u$, denoted $[u] \in (\ZZ/2\ZZ)^n$, has length 1.

\begin{definition}
\label{def:kaleidoscope}
Let $\Delta$ be a unimodular polytope with a standard vertex at the origin
and with facets $F_1, \dots, F_d$,
where the first $n$ are those lying in the coordinate hyperplanes.
Let $u_j \in \Z^n$ be the outward-pointing
primitive normal vector to the facet $F_j$, $j=1,\ldots,d$.
The \textbf{$\boldsymbol{\Delta}$-kaleidoscope} is the quotient, $K$, of the
$\Delta$-cluster by the following equivalence relation, $\sim$: for each $j = n+1, \dots, d$ and $\sigma \in \{\pm 1\}^n$, the facet $\sigma(F_j)$
of $\Delta_{\sigma}$ is identified with its copy
in $\Delta_{(-1)^{u_j} \sigma}$ through the reflection $(-1)^{u_j}$.
\end{definition}


\cref{fig:kaleidoscope_orientability,fig:kaleidoscope_cp3}
depict examples of $\Delta$-kaleidoscopes in $n=2$ and $n=3$, respectively.
The different arrowheads on the right-side
indicate the pairwise identification of
facets (edges in \cref{fig:kaleidoscope_orientability} and
faces in \cref{fig:kaleidoscope_cp3}),
as usual in low-dimensional topology.
If we ignore the arrows, then what we see in these figures
is simply the $\Delta$-clusters.

The $\Delta$-kaleidoscope of an $n$-dimensional $\Delta$
is an $n$-dimensional pseudomanifold $K$.\footnote{\textit{Pseudomanifold}
is defined, for instance, in cf.~\cite[\S IX.8]{Massey91}. In fact, \(K\) is actually a manifold, as will be seen in \cref{rmk:pl_str} and \cref{coroll:kaleidoscope_vs_real_locus}.}
Moreover, there is a natural map (pictured as folding)
\begin{equation}
    \label{eq:folding}
     F:K \longrightarrow \Delta,
\end{equation}
whose restriction to each $n$-cell $[\Delta_\sigma]$
is induced by the reflection $\sigma: \Delta_\sigma \to \Delta$.

\begin{remark}[Alternative Definition of Kaleidoscope]
\label{rmk:def_kaleidoscope}
Equivalently, we could have defined the $\Delta$-kaleidoscope as
\[
   K = \left. \Delta \times \sqrt{\one} \, \middle \slash \, \approx \right. ,
\]
where $(\xi,\sigma) \approx (\xi, (-1)^{u_j} \sigma)$
for each $(\xi,\sigma) \in F_j \times \sqrt{\one}$, $j=1,\ldots,d$,
without excluding the facets that lie in the coordinate
hyperplanes.\footnote{Points in lower-dimensional faces of $\Delta$ are subject to identifications
given by all facets containing them.
An equivalent way to define $\approx$ would be to consider,
for each $\xi \in \Delta$, the subgroup $G(\xi) \subseteq \sqrt{\one}$
generated by all the $(-1)^{u_j}$'s for the facets $F_j$ containing $\xi$,
and then set $(\xi,\sigma) \approx (\xi, g \sigma)$
for each $(\xi,\sigma) \in \Delta \times \sqrt{\one}$ and $g \in G(\xi)$,
as is done in the \emph{basic construction} of
\cite[\S 1.5]{DavisJanuszkiewicz91}.}
Indeed, the identification given by the facets lying in the coordinate hyperplanes
matches that given in the union of the $\Delta_{\sigma}$'s
to form the $\Delta$-cluster.
This latter perspective does not require a standard vertex,
and thus allows the extension of \Cref{def:kaleidoscope} to arbitrary
unimodular polytopes.

The reason we prefer our original definition (requiring a
standard vertex) is highlighted in the simple statement of
\Cref{prop:kaleidoscope_orientability}, in
\Cref{rmk:advertise_std_vertex},
and in \Cref{sec:orientability}.
On the other hand, \Cref{coroll:kaleidoscope_vs_real_locus} and
\Cref{prop:kaleidoscope_blowup} take advantage of the
equivalence of these definitions.
\end{remark}

\begin{remark}[PL-Structure on Kaleidoscope]
\label{rmk:pl_str}
The alternative perspective in \cref{rmk:def_kaleidoscope}
provides a framework
for exhibiting a structure of PL-manifold on a $\Delta$-kaleidoscope
as follows.
For each vertex $v$ of the unimodular polytope $\Delta$,
pick $A_v \in \AGL(n,\ZZ)$ such that $A_v (v) = 0$ and
$A_v(\Delta)$ has a standard vertex at the origin.
Write $A_v (x) = M_v x + c_v$ with $M_v \in \GL(n,\ZZ)$
and $c_v = -M_v v \in \RR^n$.
Then define a chart $(U_v,\varphi_v)$
for the $\Delta$-kaleidoscope $K$ by
\begin{align*}
U_v & := \{ [\xi,\sigma] \in K \mid \xi \text{ lies in the relative
interior of some face that has $v$ as one of its vertices} \}\\
\varphi_v & : U_v \longrightarrow \RR^n, \qquad [\xi, \sigma] \longmapsto (-1)^{(M_v^{-1})^T u} A_v (\xi),
\end{align*}
where $u \in \ZZ^n$ satisfies $\sigma = (-1)^u$.
Note that $A_v(\xi)$ lies in the first orthant,
whereas $(-1)^{(M_v^{-1})^T u}$ reflects into another orthant.
The collection of these charts forms a PL-atlas for $K$.
\end{remark}

The identifications defining the $\Delta$-kaleidoscope
depend only on the \emph{parity} of the components of the
primitive normal vectors to the facets.
This observation leads to the following
criterion for orientability.

\begin{proposition}[Orientability of Kaleidoscope]
\label{prop:kaleidoscope_orientability}
Let $\Delta$ be a unimodular polytope with a standard vertex at the origin.
Its $\Delta$-kaleidoscope is orientable if and only if each primitive
normal vector to a facet of $\Delta$ has an odd number of odd entries.
\end{proposition}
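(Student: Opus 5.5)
We will treat $K$ as an $n$-dimensional cell complex whose top cells are the $2^n$ polytopes $\Delta_\sigma$. Each cell receives an orientation from the ambient $\RR^n$ in which the cluster sits. Orientability of such a pseudomanifold, which is in fact a manifold by \cref{rmk:pl_str}, amounts to a consistency condition. We must be able to assign signs $\epsilon_\sigma \in \{\pm 1\}$ to the cells so that, across every codimension-one gluing, the two induced orientations on the shared facet are opposite. Equivalently, the gluing maps between adjacent oriented cells must be orientation-reversing on the facet. Since $K$ is connected (the cluster is connected), a consistent choice exists if and only if no closed chain of adjacent cells forces a contradiction. Given $\epsilon_\sigma$ on one cell, the others are determined along gluings.

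First we analyse the interior gluings inside the cluster, i.e.\ the facets on the coordinate hyperplanes. The cells $\Delta_\sigma$ and $\Delta_{\sigma'}$, with $\sigma' = \sigma$ except in coordinate $i$, share the facet $\sigma(F_i)$ and simply sit on either side of it in $\RR^n$. The ambient orientation therefore induces opposite boundary orientations on the common facet. Consistency along these gluings forces $\epsilon_{\sigma} = \epsilon_{\sigma'}$. Hence all $\epsilon_\sigma$ equal a single constant; in other words, the cluster is oriented by $\RR^n$, and this orientation is the only candidate up to global sign.

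Next we handle the outer gluings, for facets $F_j$ with $j>n$. The facet $\sigma(F_j)$ of $\Delta_\sigma$ is glued to $(-1)^{u_j}\sigma(F_j)$ of $\Delta_{(-1)^{u_j}\sigma}$ via the linear reflection $r=(-1)^{u_j}$, which maps the first cell onto the second. Let the outward normal of $\Delta_\sigma$ along this facet be $\nu$. Then the outward normal of the image cell along the image facet is $r\nu$, because $r$ carries $\Delta_\sigma$ onto the other cell. Since $r$ maps cell to cell with $\det r = (-1)^{\text{number of odd entries of } u_j}$, the boundary orientation (outward normal first, then facet orientation) is carried by $r$ to $\det r$ times the boundary orientation of the image cell. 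Gluing compatibility requires the facet identification to reverse the induced boundary orientations, which happens exactly when $r$ is orientation-reversing on $\RR^n$, i.e.\ $\det r=-1$. That condition says $u_j$ has an odd number of odd entries.

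For the converse, suppose some $u_j$ has an even number of odd entries. Then $\det r = +1$, and the gluing joins two cells with the forced common orientation in an orientation-preserving way. This exhibits an orientation-reversing loop: start in the interior of $\Delta_\sigma$, cross $\sigma(F_j)$, and return inside the cluster through the coordinate-hyperplane facets. So $K$ is non-orientable.

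The facets $F_1,\dots,F_n$ have normals $-e_i$, each with one odd entry, so they automatically satisfy the criterion, consistent with the statement referring to every facet. The main care point is the sign bookkeeping for the outward normal under $r$: we must check that $r$ sends outward to outward, and that the comparison is done on boundary orientations rather than directly. We will verify this via the boundary-orientation convention. If necessary we will reduce to a local check at a point in the relative interior of the facet, using the PL charts of \cref{rmk:pl_str}.
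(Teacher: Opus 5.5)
Your proposal is correct and follows essentially the same route as the paper. The coordinate-hyperplane facets force the ambient orientation of $\RR^n$ on every cell $\Delta_\sigma$; the paper phrases this as $F|_{[\Delta_\sigma]}$ being orientation-preserving exactly when $\sigma$ is positive. An outer gluing via $(-1)^{u_j}$ is then coherent precisely when that reflection has determinant $-1$, and your outward-normal bookkeeping simply spells out the sign check the paper leaves implicit.
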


\begin{proof}
Since the $\Delta$-kaleidoscope $K$ is an $n$-dimensional
pseudomanifold,
it is orientable exactly when all the $n$-cells
$[\Delta_{\sigma}]$, $\sigma \in \sqrt{\one}$
can be simultaneously coherently oriented.\footnote{An $n$-pseudomanifold is,
in particular, a regular CW complex
where each $(n-1)$-cell is contained in exactly two
$n$-cells.
Then an $n$-pseudomanifold is \textit{oriented} if
each $n$-cell is oriented and for any $(n-1)$-cell
contained in two $n$-cells, the induced orientations
on it from the two $n$-cells are opposed;
cf.~\cite[\S IX.5 and \S IX.8]{Massey91}.}

Choose the standard orientation of $\Delta$ induced from $\RR^n$
and attempt to define an orientation of $K$ by declaring the
restriction of the map in \Cref{eq:folding},
\[
F|_{[\Delta_\one]} : [\Delta_\one] \longrightarrow \Delta,
\]
to be orientation-preserving.
By considering the facets lying on the coordinate hyperplanes,
for a potentially coherent orientation,
each other restriction
\[
F|_{[\Delta_\sigma]} : [\Delta_\sigma] \longrightarrow \Delta,
\]
would have to be orientation-preserving when the sign of $\sigma$ is $+$,
and, otherwise, orientation-reversing.
For this to be globally coherent, it is necessary and sufficient
that there is no facet incident to a pair $[\Delta_{\sigma}]$ and
$[\Delta_{\sigma'}]$, where $\sigma$ and $\sigma'$ have the same sign,
i.e., that there is no facet with normal vector $u$ having a positive $(-1)^u$.
\end{proof}

\begin{remark}[Orientability in Terms of Mod 2 Length]
\label{rmk:advertise_std_vertex}
It follows from \Cref{prop:kaleidoscope_orientability} that
the $\Delta$-kaleidoscope is orientable if and only if the mod 2 reduction
of each primitive normal vector of $\Delta$ has length 1.
The conciseness of this criterion is due to having a standard vertex.
\end{remark}

\begin{example}
\label{ex:nonorientable_kaleidoscope}
The $\Delta$-kaleidoscope illustrated in
\cref{fig:kaleidoscope_orientability}
is nonorientable because there is a normal vector
(for instance, $u_2$ or $u_4$) with an even number of odd entries.
After manipulating the identification of edges (following, for instance,
\cite[\S I.7]{Massey91}), we can check
that this $\Delta$-kaleidoscope represents the connected
sum of five copies of $\RR\PP^2$.
\end{example}

\begin{figure}[ht]
\centering
\begin{tikzpicture}[scale=0.6]
\fill (0,0) circle (4pt);
\fill (0,2) circle (4pt);
\fill (1,4) circle (4pt);
\fill (2.5,5.5) circle (4pt);
\fill (4.2,5.5) circle (4pt);
\fill (8.2,1.5) circle (4pt);
\fill (8.2,0) circle (4pt);
\fill (0,-2) circle (4pt);
\fill (1,-4) circle (4pt);
\fill (2.5,-5.5) circle (4pt);
\fill (4.2,-5.5) circle (4pt);
\fill (8.2,-1.5) circle (4pt);
\fill (-8.2,0) circle (4pt);
\fill (-1,4) circle (4pt);
\fill (-2.5,5.5) circle (4pt);
\fill (-4.2,5.5) circle (4pt);
\fill (-8.2,1.5) circle (4pt);
\fill (-1,-4) circle (4pt);
\fill (-2.5,-5.5) circle (4pt);
\fill (-4.2,-5.5) circle (4pt);
\fill (-8.2,-1.5) circle (4pt);
\begin{scope}[very thick,decoration={markings,
mark=at position 0.55 with {\arrow{>}}}]
\draw[postaction={decorate}] (8.2,0)--(8.2,1.5);
\draw[postaction={decorate}] (-8.2,0)--(-8.2,1.5);
\end{scope}
\begin{scope}[very thick,decoration={markings,
mark=at position 0.5 with {\arrow{>}},
mark=at position 0.6 with {\arrow{>}}}]
\draw[postaction={decorate}] (4.2,5.5)--(2.5,5.5);
\draw[postaction={decorate}] (4.2,-5.5)--(2.5,-5.5);
\end{scope}
\begin{scope}[very thick,decoration={markings,
mark=at position 0.45 with {\arrow{>}},
mark=at position 0.55 with {\arrow{>}},
mark=at position 0.65 with {\arrow{>}}}]
\draw[postaction={decorate}] (1,4)--(0,2);
\draw[postaction={decorate}] (1,-4)--(0,-2);
\end{scope}
\begin{scope}[very thick,decoration={markings,
mark=at position 0.45 with {\arrow{>}},
mark=at position 0.5 with {\arrow{>}},
mark=at position 0.55 with {\arrow{>}},
mark=at position 0.6 with {\arrow{>}}}]
\draw[postaction={decorate}] (-4.2,5.5) -- (-8.2,1.5);
\draw[postaction={decorate}] (4.2,-5.5) -- (8.2,-1.5);
\end{scope}
\begin{scope}[thick,decoration={markings,
mark=at position 0.6 with {\arrow{triangle 60}}}]
\draw[postaction={decorate}] (8.2,1.5) -- (4.2,5.5);
\draw[postaction={decorate}] (-8.2,-1.5) -- (-4.2,-5.5);
\end{scope}
\begin{scope}[thick,decoration={markings,
mark=at position 0.55 with {\arrow{triangle 60}},
mark=at position 0.7 with {\arrow{triangle 60}}}]
\draw[postaction={decorate}] (2.5,5.5) -- (1,4);
\draw[postaction={decorate}] (-2.5,-5.5) -- (-1,-4);
\end{scope}
\begin{scope}[very thick,decoration={markings,
mark=at position 0.55 with {\arrow{triangle 60}},
mark=at position 0.7 with {\arrow{triangle 60}},
mark=at position 0.85 with {\arrow{triangle 60}}}]
\draw[postaction={decorate}] (-1,4) -- (-2.5,5.5);
\draw[postaction={decorate}] (1,-4) -- (2.5,-5.5);
\end{scope}
\begin{scope}[thick,decoration={markings,
mark=at position 0.6 with {\arrow{open triangle 90}}}]
\draw[postaction={decorate}] (0,2)--(-1,4);
\draw[postaction={decorate}] (0,-2)--(-1,-4);
\end{scope}
\begin{scope}[thick,decoration={markings,
mark=at position 0.55 with {\arrow{open triangle 90}},
mark=at position 0.7 with {\arrow{open triangle 90}}}]
\draw[postaction={decorate}] (-2.5,5.5)--(-4.2,5.5);
\draw[postaction={decorate}] (-2.5,-5.5)--(-4.2,-5.5);
\end{scope}
\begin{scope}[very thick,decoration={markings,
mark=at position 0.45 with {\arrow{open triangle 90}},
mark=at position 0.6 with {\arrow{open triangle 90}},
mark=at position 0.75 with {\arrow{open triangle 90}}}]
\draw[postaction={decorate}] (-8.2,0)--(-8.2,-1.5);
\draw[postaction={decorate}] (8.2,0)--(8.2,-1.5);
\end{scope}
\node at (3.7,2.0) {$\Delta_{_{(+,+)}} = \Delta$};
\node at (4,-2.5) {$\Delta_{_{(+,-)}}$};
\node at (-3.5,2.0) {$\Delta_{_{(-,+)}}$};
\node at (-3.5,-2.5) {$\Delta_{_{(-,-)}}$};
\filldraw[fill=blue!50,fill opacity=0.25] (0,0) -- (0,2) -- (1,4) -- (2.5,5.5) -- (4.2,5.5) -- (8.2,1.5) -- (8.2,0) -- cycle;
\filldraw[fill=blue!20,fill opacity=0.25] (0,0) -- (0,2) -- (-1,4) -- (-2.5,5.5) -- (-4.2,5.5) -- (-8.2,1.5) -- (-8.2,0) -- cycle;
\filldraw[fill=blue!20,fill opacity=0.25] (0,0) -- (0,-2) -- (1,-4) -- (2.5,-5.5) -- (4.2,-5.5) -- (8.2,-1.5) -- (8.2,0) -- cycle;
\filldraw[fill=blue!20,fill opacity=0.25] (0,0) -- (0,-2) -- (-1,-4) -- (-2.5,-5.5) -- (-4.2,-5.5) -- (-8.2,-1.5) -- (-8.2,0) -- cycle;
\draw[thick, -stealth] (0.5,3) -- (-1.5,4) node[scale=0.5,above right] {$u_5=\begin{pmatrix}-2 \\1 \end{pmatrix}$};
\draw[thick, -stealth] (1.75,4.75) -- (0.75,5.75) node[scale=0.5,above] {$u_4=\begin{pmatrix}-1 \\1 \end{pmatrix}$};
\draw[thick, -stealth] (3.35,5.5) -- (3.35,6.5) node[scale=0.5,above right] {$u_3=\begin{pmatrix}0 \\1 \end{pmatrix}$};
\draw[thick, -stealth] (6.2,3.5) -- (7.2,4.5) node[scale=0.5,right] {$u_2=\begin{pmatrix}1 \\1 \end{pmatrix}$};
\draw[thick, -stealth] (8.2,0.75) -- (9.2,0.75) node[scale=0.5,right] {$u_1=\begin{pmatrix}1 \\0 \end{pmatrix}$};
\end{tikzpicture}
\caption{A nonorientable $\Delta$-kaleidoscope
for a polytope $\Delta$ with $7$ edges and $n=2$.}
\label{fig:kaleidoscope_orientability}
\end{figure}

For $n=2$, if there is an odd number of edges, then the
$\Delta$-kaleidoscope is necessarily nonorientable; indeed,
it is enough to consider the primitive normal vectors of $\Delta$ mod 2,
denoted $[u_k]$,
to see that the parities required for orientability
force each $[u_k]$ to be one of the standard vectors,
which is not compatible with unimodularity at all vertices.
Actually, for $n=2$, the $\Delta$-kaleidoscope can only
be orientable when there are exactly four edges;
this follows from a
classification of unimodular polygons; see \cref{coroll:4dim}.

The criterion in \cref{prop:kaleidoscope_orientability} becomes
more interesting in higher dimensions, where a simple
classification of unimodular polytopes is not available;
see \cref{exs_fano_3,exs_fano_4,exs:orientable_miyake_oda_nagaya,exs:non-orientable_miyake_oda_nagaya}.

\begin{remark}[Orientable Double Cover]
    When the $\Delta$-kaleidoscope is not orientable, it might be
    helpful to use a model for its connected orientable double cover of the form
\[
   \widetilde K = \left. \Delta \times \sqrt{\one} \times \{ \pm 1 \}
   \, \middle \slash \, \approx \right. ,
\]
where $(\xi,\sigma,\varepsilon) \approx
(\xi, (-1)^{u_j} \sigma, -(-1)^{u_j}\varepsilon)$ for each
$(\xi,\sigma,\varepsilon) \in F_j \times \sqrt{\one} \times \{ \pm1 \}$,
$j=1,\ldots,d$.
\end{remark}


\begin{proposition}[Euler Characteristic of Kaleidoscope]
\label{prop:kaleidoscope_euler}
Let $\Delta$ be an $n$-dimensional unimodular polytope
with a standard vertex at the origin.
Let $a_k$ be the number of $k$-faces in $\Delta$.
Then the Euler characteristic of the $\Delta$-kaleidoscope is
\[
   \sum_{k=0}^n (-1)^k 2^k a_k \ .
\]
\end{proposition}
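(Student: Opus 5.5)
We compute $\chi(K)$ from a cell decomposition of the $\Delta$-kaleidoscope $K$ inherited from the face structure of $\Delta$, using the description $K = \Delta \times \sqrt{\one} / \approx$ of \cref{rmk:def_kaleidoscope}. Every point of $K$ has the form $[\xi,\sigma]$. The folding map $F : K \to \Delta$ of \eqref{eq:folding} sends it to $\xi$. So $K$ is the disjoint union, over the open faces $E^\orm$ of $\Delta$, of the preimages $F^{-1}(E^\orm)$. We will show that each such preimage is a disjoint union of open cells of dimension $\dim E$, and we will count them.

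Fix a $k$-face $E$ of $\Delta$. It is the intersection of exactly $n-k$ facets $F_{j_1},\dots,F_{j_{n-k}}$, by simplicity of unimodular polytopes. For $\xi \in E^\orm$, the footnote to \cref{rmk:def_kaleidoscope} shows that the classes $[\xi,\sigma]$ are the cosets of the subgroup $G(E) \subseteq \sqrt{\one}$ generated by $(-1)^{u_{j_1}},\dots,(-1)^{u_{j_{n-k}}}$. This subgroup does not depend on the choice of $\xi$ in $E^\orm$.

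The key step is to show that $G(E)$ has order exactly $2^{n-k}$. Unimodularity at any vertex $v$ of $E$ says that the primitive normals of the $n$ facets through $v$ form a $\ZZ$-basis of $\ZZ^n$. Hence their mod $2$ reductions form an $\mathbb{F}_2$-basis of $(\ZZ/2\ZZ)^n$. In particular, the reductions $[u_{j_1}],\dots,[u_{j_{n-k}}]$ are linearly independent. Identifying $\sqrt{\one}$ with $(\ZZ/2\ZZ)^n$ via $u \mapsto (-1)^u$, we conclude that $G(E)$ is an elementary abelian group of order $2^{n-k}$. Therefore $F^{-1}(E^\orm) \cong E^\orm \times \sqrt{\one}/G(E)$ consists of $2^n/2^{n-k} = 2^k$ disjoint copies of the open $k$-cell $E^\orm$.

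It remains to confirm that these pieces form a genuine finite CW structure on $K$. We argue as follows:
\begin{itemize}
\item The closed cell attached to the coset $\sigma G(E)$ is the image of $E \times \{\sigma\}$ under the quotient map.
\item Its boundary lies in the images of $E' \times \{\sigma\}$ for faces $E' \subset E$, which is a union of lower-dimensional cells.
\item The restriction to $E^\orm \times \{\sigma\}$ is injective, since no identifications occur within a single copy over the open face.
\item $K$ is a compact Hausdorff quotient of the finite CW complex $\Delta \times \sqrt{\one}$, with the identifications respecting the face structure.
\end{itemize}
This gives a finite CW decomposition of $K$ with $2^k a_k$ cells of dimension $k$. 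The formula $\chi(K) = \sum_{k=0}^n (-1)^k 2^k a_k$ follows by counting cells.

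We expect the main obstacle to be the independence of the generators of $G(E)$ modulo $2$, since the count $2^k$ hinges on it. It follows directly from unimodularity, because a $\ZZ$-basis stays a basis after reduction mod $2$. The CW-structure verification is routine by comparison. As an alternative, one could avoid CW issues by using the charts of \cref{rmk:pl_str} to triangulate $K$, but the cell count above suffices.
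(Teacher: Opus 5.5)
Your proposal is correct and follows essentially the same route as the paper: a CW structure on $K$ induced by the face decomposition of $\Delta$, in which each $k$-face produces $2^n/2^{n-k}=2^k$ distinct $k$-cells. Your argument that the generators $(-1)^{u_{j_1}},\dots,(-1)^{u_{j_{n-k}}}$ are independent mod $2$ (the facet normals at a vertex form a $\ZZ$-basis) makes explicit a step the paper leaves implicit when it goes from ``lies at the intersection of $n-k$ facets'' to the count $2^k a_k$.
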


\begin{proof}
The polytope $\Delta$ is naturally a CW complex where the $k$-cells
are the $k$-faces for $k=0,1,\dots,n$.
Each of the $2^n$ reflected copies of $\Delta$ by reflections
$\sigma \in \{ \pm 1 \} ^n$ carries a similar decomposition.
The identification of facets in pairs induces a CW complex
on the $\Delta$-kaleidoscope, with $2^n$ $n$-cells
and $\frac 12 2^n a_{n-1}=2^{n-1} a_{n-1}$ $(n-1)$-cells.
The identification of facets induces identifications of lower
faces, since each of the $k$-faces of $\Delta$ lies at the
intersection of $n-k$ facets; hence, it originates
exactly $\frac{1}{2^{n-k}} 2^n a_k=2^k a_k$ distinct $k$-cells
in the $\Delta$-kaleidoscope.
In particular, each of the vertices of $\Delta$ (case $k=0$)
originates exactly one vertex in the $\Delta$-kaleidoscope.
\end{proof}

The condition of $\Delta$ having a standard vertex
at the origin is not required for the preceding proposition, in view of \cref{rmk:def_kaleidoscope}.

\begin{example}
The polygon $\Delta$ in \Cref{fig:kaleidoscope_orientability} has
$a_0 = 7, a_1=7, a_2=1$;
hence, its $\Delta$-kaleidoscope has Euler characteristic
$\chi = (-2)^0 \cdot 7 + (-2)^1 \cdot 7 + (-2)^2 \cdot 1 = -3$.
This fits with the fact that this $\Delta$-kaleidoscope
represents the surface $\#_5 \overline{\RP{2}}$
(see \Cref{ex:nonorientable_kaleidoscope}).
\end{example}

\begin{example}
\label{ex:euler_simplex}
    Let $\Delta$ be the $n$-dimensional simplex with
    vertices at the origin and the points with coordinate
    1 along each of the $n$ axes.
    Its number of $k$-faces is the binomial coefficient
    \[
    a_k = \binom{n+1}{k+1}.
    \]
By \cref{prop:kaleidoscope_euler},
the Euler characteristic of the $\Delta$-kaleidoscope is then
\begin{eqnarray*}
\sum_{k=0}^n (-1)^k 2^k \textstyle{\binom{n+1}{k+1}} & = & 
\sum_{j=1}^{n+1} (-1)^{j-1} \, 2^{j-1} \textstyle{\binom{n+1}{j}} \\
& = & 
-\tfrac{1}{2} \sum_{j=1}^{n+1} (-2)^j \textstyle{\binom{n+1}{j}} \\
& = & -\tfrac{1}{2} \Big( -1 + \underbrace{\sum_{j=0}^{n+1} 1^{n+1}
\, (-2)^j \textstyle{\binom{n+1}{j}}}_{=(1-2)^{n+1} = -(-1)^n} \Big) \\
& = & \tfrac{1}{2} \left( 1+(-1)^n \right) \\
& = & \begin{cases}
0 & \text{ when $n$ is odd,}\\
1 & \text{ when $n$ is even.}
\end{cases}
\end{eqnarray*}

\end{example}




\section{Kaleidoscope Model for a Toric Real Locus}
\label{sec:kaleidoscope_model}

It follows from \Cref{prop:branched_covering}
that the restriction of the moment map to the toric real locus,
$\mu\vert_{M^\tau}: M^\tau \to \Delta$,
is a $2^n$-fold branched covering with the branch set being the boundary
of $\Delta$.
In particular, since $\Delta^\orm$ is contractible,
$M^\tau \cap \mu^{-1}(\Delta^\orm)$ is the disjoint
union of $2^n$ open subsets of $M^\tau$, each of which is mapped
diffeomorphically onto $\Delta^\orm$ by $\mu$.
In this section, we exploit these facts translated to the $\Delta$-kaleidoscope.

Throughout this section, we choose an initial integral basis
of the Lie algebra $\ft$ of a $n$-dimensional torus $T$, thus
identifying $\ft$ with $\RR^n$, so that
the integral lattice becomes $(2\pi\ZZ)^n$ under this identification.
In this way, the torus $T$ is identified with the
standard torus $\TT^n \coloneqq S^1 \times \ldots \times S^1$,
and the dual of the Lie algebra $\ft^*$ is also identified with $\RR^n$.

Let $(M, \omega, \mu)$ be a toric symplectic $\TT^n$-manifold
equipped with a toric real structure $\tau$; cf. \cref{def:toric_real}.
Let $M^\tau$ be the corresponding toric real locus.
Use a translation to bring one vertex of its moment polytope
to the origin,
 followed by a transformation from $\GL(n,\Z)$ to make that a
\emph{standard vertex at the origin} (see \cref{def:standard_vertex}).
Attaining this condition amounts to adding a suitable
constant to $\mu$ and changing the integral basis of $\ft$,
i.e., choosing another splitting $\TT^n \cong S^1 \times \ldots \times S^1$.
Altogether, this amounts to acting by $\AGL(n,\Z)$
on the dual of the Lie algebra.
These changes yield weakly isomorphic toric symplectic
$\TT^n$-manifolds and diffeomorphic toric real loci.
Hence, for the purpose of studying the toric real locus, we may assume
that the moment polytope $\Delta$ has a standard vertex at the origin.

\begin{corollary}[Toric Real Locus is Homeomorphic to Kaleidoscope]
\label{coroll:kaleidoscope_vs_real_locus}
In the conditions of \cref{prop:branched_covering}
and assuming that $\Delta$ has a standard vertex at the origin,
the map $q$ (given in \cref{prop:branched_covering}) and
the equivalence relation $\approx$ (from \cref{rmk:def_kaleidoscope})
induce a homeomorphism $h$ that makes the following diagram commute:
\begin{eqnarray}
\label{eq:commutative_diagram}
\begin{tikzcd}
   & {\Delta \times \sqrt{\one}}
   \arrow["h"', from=2-1, to=2-2]
   \arrow["mod \, \approx"', from=1-2, to=2-1]\\
   K \arrow["q", from=1-2, to=2-2] & M^\tau 
\end{tikzcd}
\end{eqnarray}
\end{corollary}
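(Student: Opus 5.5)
The plan is to show that $q$ is constant on $\approx$-classes, so that it descends to a continuous surjection $h\colon K\to M^\tau$. We then show that $h$ is injective. Since $K$ is compact (a quotient of the compact space $\Delta\times\sqrt{\one}$) and $M^\tau$ is Hausdorff, $h$ is then a homeomorphism, and the diagram commutes by construction.

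\emph{Step 1 (fibers of $q$).} Since $\mu\circ q = pr$, two points $(\xi,\sigma)$ and $(\xi',\sigma')$ can have the same image only if $\xi=\xi'$. Fix $\xi$ in the relative interior of a $k$-face $F$, and let $F_{j_1},\dots,F_{j_{n-k}}$ be the facets containing $F$. Since $q$ is $\sqrt{\one}$-equivariant, $q(\xi,\sigma)=\sigma\cdot q(\xi,\one)$. Hence $q(\xi,\sigma)=q(\xi,\sigma')$ if and only if $\sigma'\sigma^{-1}$ lies in the stabilizer in $\sqrt{\one}$ of the point $p=q(\xi,\one)$. That stabilizer is $\sqrt{\one}\cap T_p$. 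By the facts recalled from Delzant, $T_p$ is the subtorus with Lie algebra the annihilator of $F$, which is spanned by the normals $u_{j_1},\dots,u_{j_{n-k}}$. The torus $T_p$ is the image of $\mathrm{span}(u_{j_i})$ under $\exp$, with the lattice $(2\pi\ZZ)^n$. Its elements of order at most $2$ are $\exp(\pi\sum c_i u_{j_i})$ with $c_i\in\{0,1\}$, using that the $u_{j_i}$ form part of a lattice basis (unimodularity). These elements are exactly $\prod_i (-1)^{c_i u_{j_i}}$. So the stabilizer is the group $G(\xi)$ generated by the $(-1)^{u_{j_i}}$, matching the footnote in \cref{rmk:def_kaleidoscope}. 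This gives both directions at once: $q$ is constant on $\approx$-classes, and points with equal images are $\approx$-equivalent. Therefore $h$ is well defined and injective.

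\emph{Step 2 (topology).} $h$ is continuous by the universal property of the quotient, and surjective because $q$ is (\cref{prop:branched_covering}). A continuous bijection from a compact space to a Hausdorff space is a homeomorphism.

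The main obstacle is the identification $\sqrt{\one}\cap T_p=G(\xi)$. One must check carefully that order-two elements of the subtorus are given by half-lattice points of $\mathrm{span}(u_{j_i})\cap(2\pi\ZZ)^n$, and that these reduce to $(-1)^{\sum c_iu_{j_i}}$. This uses that $\{u_{j_i}\}$ extends to a $\ZZ$-basis, i.e., that $\mathrm{span}(u_{j_i})\cap\ZZ^n=\bigoplus\ZZ u_{j_i}$. This holds by unimodularity at a vertex of $F$. Alternatively, one could verify the stabilizer directly in the local model of \cref{lem:delzant_normal_form_real}, after transporting by the $\GL(n,\ZZ)$ change of basis adapted to $F$: there the real points are $(\pm1,\mu,x)$ with $x_j=0$ for the last $n-k$ coordinates, visibly fixed exactly by sign changes in those coordinates. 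I would use this as a cross-check.
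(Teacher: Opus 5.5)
Your proposal is correct and follows the paper's proof. Both arguments use $\mu\circ q = pr$, $\sqrt{\one}$-equivariance, and the isotropy subtorus of the face containing $\xi$ to show that $q$ identifies exactly the $\approx$-classes, so it descends to a bijection $h$, and both then conclude with the compact-to-Hausdorff argument. Your explicit check via unimodularity, that the order-two elements of that subtorus are precisely the products of the $(-1)^{u_{j_i}}$, fills in a step the paper only asserts.
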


\begin{proof}
We analyze the injectivity failure of $q$.
The restriction
\[
q \vert_{\Delta^\orm \times \sqrt{\one}} :
\Delta^\orm \times \sqrt{\one} \longrightarrow M^\tau \cap M^\orm,
\]
where $M^\orm := \mu^{-1} (\Delta^\orm)$, is bijective
since the $\sqrt{\one}$-action is free on $M^\tau \cap M^\orm$.
In general, $q(\xi,\sigma) = q(\xi',\sigma')$ implies that $\xi=\xi'$
because $\mu \circ q$ is the projection onto the first factor.

For starters, consider a point $\xi$ in the relative interior
of a facet of $\Delta$.
The corresponding $T$-orbit $\mu^{-1} (\xi)$ has an isotropy group,
the circle $G$, whose Lie algebra is spanned by a primitive
normal vector, $u$, to that facet; cf.~\cite[Lemma 2.2]{Delzant88}.
By the equivariance of $q$,
\[
   q(\xi,\sigma) = q(\xi,\sigma') \qquad \iff \qquad
   \sigma \cdot q(\xi,\one) = \sigma' \cdot q(\xi,\one)
\]
and this holds if and only if $\sigma ^{-1} \sigma' \in G$.
Therefore, $q |_{\{ \xi \} \times \sqrt{\one}}$ is the quotient map
by the action of the subgroup $G \cap \sqrt{\one}$, which is
generated by $(-1)^u$ in the sense of \cref{notation:kaleidoscope}.

In general, when $\xi$ is in the relative interior of a $k$-face of $\Delta$,
the corresponding $T$-orbit $\mu^{-1} (\xi)$ has isotropy group
the subtorus $G$, whose Lie algebra is spanned by the
outward-pointing primitive normal vectors $u_1, u_2,\ldots u_{n-k}$
to the $n-k$ facets containing that face; cf.~\cite[Lemma 2.2]{Delzant88}.
Again, by the equivariance of $q$, we have 
$q(\xi,\sigma) = q(\xi,\sigma')$ if and only if
$\sigma ^{-1} \sigma' \in G$.
Therefore, $q |_{\{ \xi \} \times \sqrt{\one}}$ is the quotient map
by the action of the subgroup $G \cap \sqrt{\one}$,
which is generated by the $n-k$ elements
\[
   (-1)^{u_1}, (-1)^{u_2}, \ldots , (-1)^{u_{n-k}} .
\]
This coincides with the equivalence relation $\approx$ defining
the kaleidoscope $K$ in \cref{rmk:def_kaleidoscope}.
Therefore, $q$ factors through $\approx$, giving a well-defined
bijection $h$ that makes the diagram \eqref{eq:commutative_diagram} commute.

Since $K$ has the quotient topology and $q$ is continuous,
by the commutativity of the diagram, $h$ is also continuous.
Since $h$ is bijective, its inverse is continuous if
and only if $h$ is a closed map, which holds because $K$ is compact
and $h$ is continuous.
Altogether, $h$ is a homeomorphism.
\end{proof}


We next focus on the effect of a blow-up at a fixed point
upon the toric real locus \emph{from the viewpoint of kaleidoscopes}.
The fact that blow-up can be performed in a symplectic
and equivariant fashion, in particular in a toric symplectic fashion,
goes back to Gromov~\cite[p.342]{GromovBook}, McDuff~\cite{McDuff}, and
Guillemin and Sternberg~\cite{GuilleminSternberg89}.
In the present paper,
\emph{blow-up} always means a \emph{toric blow-up},
preserving the category of toric symplectic manifolds.
This is described in~\cite[\S 3]{KarshonKessler07}
and \cite[Ch.1]{GuilleminBook} among other references.
Our starting point is the well-known effect of a blow-up at a fixed point
upon the moment polytope itself, which is called a
\emph{blow-up at a vertex} (or a \emph{corner chopping} \cite{KKP07}):

\begin{definition}
\label{def:blow_up_at_vertex}
    Let $\xi$ be a vertex of a unimodular polytope $\Delta$ in $\RR^n$,
let $v_1, \ldots, v_n \in \Z^n$ be the primitive vectors along the edges
pointing from $\xi$ towards each of the adjacent vertices,
and let $\epsilon$ be a positive number small enough
for all $\xi+\varepsilon v_j$, $j=1,\ldots,n$,
to lie in the interior of the edges incident to $\xi$.
    The \textbf{$\boldsymbol{\epsilon}$-blow-up of $\Delta$
at $\boldsymbol{\xi}$} is the transformation that produces the polytope
$\widetilde \Delta$
whose vertices are those of $\Delta$ except that $\xi$ is replaced by
the $n$ new vertices $\xi+\varepsilon v_j$, $j=1,\ldots,n$.
\end{definition}

The new polytope, $\widetilde \Delta$, obtained by
$\epsilon$-blow-up of $\Delta$ at $\xi$, is again unimodular:
at a new vertex $\xi+\varepsilon v_j$, the primitive edge vectors are
$v_j$ and $v_k-v_j$ with $k \neq j$.
The normal to the new facet (i.e., the facet shared by the new vertices) is the sum of
the normals to the old facets meeting at $\xi$.
Moreover, $\widetilde \Delta$ corresponds to the toric symplectic manifold
$(\widetilde M,\widetilde \omega,\widetilde \mu)$ that
(up to isomorphism) is the $\epsilon$-blow-up of the toric symplectic manifold
corresponding to $\Delta$ at the fixed point $p = \mu^{-1}(\xi)$.
Although this blow-up symplectically depends on the parameter $\epsilon$,
we disregard $\epsilon$ since it does not affect the diffeomorphism
type of $M$~\cite[\S 7.1]{McDuffSalamon}, nor that of a toric real locus.

We provide a proof of the following standard fact
\emph{via kaleidoscopes}.

\begin{proposition}[Toric Real Locus of Blow-Up is Blow-Up of Toric Real Locus, Instance 1]
\label{prop:kaleidoscope_blowup}
Let $(M, \omega, \mu, \tau)$ be a $2n$-dimensional toric symplectic manifold,
equipped with a toric real structure, let
$(\widetilde M,\widetilde \omega,\widetilde \mu)$ be a blow-up
of $(M, \omega, \mu)$ at a fixed point $p$,
and let $\widetilde{\tau}$ be a toric real structure on it.
Then the toric real locus $M^{\widetilde{\tau}}$ is homeomorphic to
$M^{\tau} \# \RR \PP^n$.\footnote{When $n$ is odd,
$\RR\PP^n$ is orientable, but the homeomorphism (or diffeomorphism) type
of the connected sum with it does not depend on the choice of orientation,
because $\RR\PP^n$ admits an orientation-reversing diffeomorphism.
When $n$ is even, $\RR\PP^n$ is not orientable anyway.}
\end{proposition}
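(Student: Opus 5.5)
Via \cref{coroll:kaleidoscope_vs_real_locus}, I will identify $M^\tau$ with the $\Delta$-kaleidoscope $K$ and $M^{\widetilde\tau}$ with the $\widetilde\Delta$-kaleidoscope $\widetilde K$. By \cref{rmk:toric_lagrangian}, $\widetilde\tau$ can be taken to be any toric real structure. Since the kaleidoscope of \cref{rmk:def_kaleidoscope} does not need a standard vertex, I will apply an element of $\AGL(n,\ZZ)$ to place the blown-up vertex $\xi=\mu(p)$ at the origin as a standard vertex. The facets through $\xi$ are then the coordinate hyperplanes, with outward normals $-e_1,\dots,-e_n$. The new facet $E$ of $\widetilde\Delta$ lies on $\{x_1+\dots+x_n=\epsilon\}$, with outward normal $u_E=-(1,\dots,1)$, so $(-1)^{u_E}=(-,\dots,-)=-\one$. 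All other facets of $\widetilde\Delta$ are truncations of those of $\Delta$ and keep the same normals.

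Next I will decompose both kaleidoscopes. Let $S=\{x\in\Delta : x_1+\dots+x_n\le\epsilon\}$ be the small standard simplex cut off at $\xi$, so that $\Delta=\widetilde\Delta\cup S$ with $\widetilde\Delta\cap S=E$. In $K$, the $2^n$ reflected copies $S_\sigma$ glue along coordinate hyperplanes to form the cross-polytope $\{|x_1|+\dots+|x_n|\le\epsilon\}$, which embeds in $K$ as a closed PL $n$-ball $B$. This is visible in the chart $\varphi_\xi$ of \cref{rmk:pl_str}, since no identification along the outer facets of $\Delta$ touches a neighbourhood of $\xi$. Hence $K\setminus B^\orm$ is the quotient of $\widetilde\Delta\times\sqrt{\one}$ by the same identifications as in $\widetilde K$, except along $E$, where nothing is identified. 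Its boundary $\partial B$ is the sphere $\bigcup_\sigma \sigma(E)$, assembled as in $K$ by the coordinate reflections.

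Passing from $K\setminus B^\orm$ to $\widetilde K$ imposes exactly one extra identification: $(x,\sigma)\approx(x,-\sigma)$ for $x\in E$. On the sphere $\partial B$, viewed as the boundary of the cross-polytope in $\RR^n$, this is the antipodal map $y\mapsto -y$. So $\widetilde K=(K\setminus B^\orm)/(y\sim -y \text{ on } \partial B)$. This is the standard description of a connected sum with $\RR\PP^n$: $\RR\PP^n$ minus an open ball is the mapping cylinder of $S^{n-1}\to\RR\PP^{n-1}$, and gluing that cylinder to $K\setminus B^\orm$ along $\partial B$ is homeomorphic to collapsing $\partial B$ by the antipodal map, since a collar of $\partial B$ in $K\setminus B^\orm$ absorbs the cylinder. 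This gives $\widetilde K\cong K\#\RR\PP^n$, and therefore $M^{\widetilde\tau}\cong M^\tau\#\RR\PP^n$. The footnote to the statement covers the independence of orientation choices.

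The main obstacle is the local claim that $\bigcup_\sigma S_\sigma$ is an honest embedded ball in $K$ and that the quotient of $\partial B$ is precisely antipodal. I will settle this in the chart $\varphi_\xi$, where everything is literally the cross-polytope in $\RR^n$. The subsequent connected-sum identification is standard PL topology, and I will invoke it rather than reprove it.
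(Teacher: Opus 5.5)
Your proposal is correct and follows the paper's proof: put the blown-up vertex at the origin as a standard vertex, note that the new facet has normal $-(1,\dots,1)$ so that $(-1)^{u_{d+1}}=-\one$, and obtain $\widetilde K$ from $K$ by removing the open cross-polytope $\{|x_1|+\dots+|x_n|<\epsilon\}$ and identifying antipodal points of its boundary sphere, which is the connected sum with $\RR\PP^n$. Your extra details make explicit steps the paper only asserts: the chart $\varphi_\xi$ of \cref{rmk:pl_str} shows the cross-polytope is an embedded ball, and the collar/mapping-cylinder argument justifies the final identification.
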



\begin{figure}[ht]
\centering
\begin{tikzpicture}[scale = 0.6,baseline=-50]
\tikzset{snake it/.style={decorate, decoration={snake, amplitude=0.5mm}}}
\fill (0,0) circle (2pt);
\filldraw[fill=blue!30,fill opacity=0.25, draw=white] (0,2.71) -- (0,1) -- (1,0) -- (1.14 + 1*2.71,0)  decorate[snake it] {to[out=90, in=-10] (0,2.71)};
\draw (0,2.71) -- (0,1);
\draw (0,1) -- (1,0); 
\draw (1,0) -- (1.14 + 1*2.71,0);
\draw[snake it,dotted,thick] (1.14 + 1*2.71,0) to[out=90, in=-10] (0,2.71);
\fill (1,0) circle (2pt);
\fill (0,1) circle (2pt);
\filldraw[dashed,fill=blue!30,fill opacity=0] (1,0) -- (0,0) -- (0,1);
\node at (1.5,1.5) {$\widetilde \Delta$};
\end{tikzpicture}
\hspace{6em}
\begin{tikzpicture}[scale = 0.6]
	\tikzset{snake it/.style={decorate, decoration={snake, amplitude=0.5mm}}}
	\fill (0,0) circle (2pt);
	\filldraw[fill=blue!30,fill opacity=0.25, draw=white] (0,2.71) -- (0,1) -- (1,0) -- (1.14 + 1*2.71,0)  decorate[snake it] {to[out=90, in=-10] (0,2.71)};
	\draw (0,2.71) -- (0,1) -- (1,0) -- (1.14 + 1*2.71,0);
	\draw[snake it,dotted,thick] (1.14 + 1*2.71,0) to[out=90, in=-10] (0,2.71);
	\fill (1,0) circle (2pt);
	\fill (0,1) circle (2pt);
	\filldraw[dashed,fill=blue!30,fill opacity=0] (1,0) -- (0,0) -- (0,1);
	\draw[decoration={markings,
	mark=at position 0.5 with {\arrow{>}}}] (0,1) -- (1,0);
\begin{scope}[xscale=-1, yscale=1]
	\fill (0,0) circle (2pt);
	\filldraw[fill=blue!30,fill opacity=0.25, draw=white] (0,2.71) -- (0,1) -- (1,0) -- (1.14 + 1*2.71,0)  decorate[snake it] {to[out=90, in=-10] (0,2.71)};
	\draw (0,2.71) -- (0,1) -- (1,0) -- (1.14 + 1*2.71,0);
	\draw[snake it,dotted,thick] (1.14 + 1*2.71,0) to[out=90, in=-10] (0,2.71);
	\fill (1,0) circle (2pt);
	\fill (0,1) circle (2pt);
	\filldraw[dashed,fill=blue!30,fill opacity=0] (1,0) -- (0,0) -- (0,1);
\end{scope}
\begin{scope}[xscale=1, yscale=-1]
	\fill (0,0) circle (2pt);
	\filldraw[fill=blue!30,fill opacity=0.25, draw=white] (0,2.71) -- (0,1) -- (1,0) -- (1.14 + 1*2.71,0)  decorate[snake it] {to[out=90, in=-10] (0,2.71)};
	\draw (0,2.71) -- (0,1) -- (1,0) -- (1.14 + 1*2.71,0);
	\draw[snake it,dotted,thick] (1.14 + 1*2.71,0) to[out=90, in=-10] (0,2.71);
	\fill (1,0) circle (2pt);
	\fill (0,1) circle (2pt);
	\filldraw[dashed,fill=blue!30,fill opacity=0] (1,0) -- (0,0) -- (0,1);
\end{scope}
\begin{scope}[xscale=-1, yscale=-1]
	\fill (0,0) circle (2pt);
	\filldraw[fill=blue!30,fill opacity=0.25, draw=white] (0,2.71) -- (0,1) -- (1,0) -- (1.14 + 1*2.71,0)  decorate[snake it] {to[out=90, in=-10] (0,2.71)};
	\draw (0,2.71) -- (0,1) -- (1,0) -- (1.14 + 1*2.71,0);
	\draw[snake it,dotted,thick] (1.14 + 1*2.71,0) to[out=90, in=-10] (0,2.71);	
	\fill (1,0) circle (2pt);
	\fill (0,1) circle (2pt);
	\filldraw[dashed,fill=blue!30,fill opacity=0] (1,0) -- (0,0) -- (0,1);
\end{scope}
\begin{scope}[very thick,decoration={
		markings,
		mark=at position 0.5 with {\arrow{>}}}
	]
	\draw[postaction={decorate}] (1,0)--(0,1);
	\draw[postaction={decorate}] (-1,0)--(0,-1);
\end{scope}
\begin{scope}[very thick,decoration={
		markings,
		mark=at position 0.45 with {\arrow{>}},
		mark=at position 0.55 with {\arrow{>}}}
	]
	\draw[postaction={decorate}] (0,1)--(-1,0);
	\draw[postaction={decorate}] (0,-1)--(1,0);
\end{scope}
\node at (1.5,1.5) {$\widetilde K$};
\end{tikzpicture}
\caption{The effect of a blow-up at the origin for the toric real locus modelled
by a kaleidoscope, in the case $n=2$.
The image on the left depicts a portion of the moment polytope
of the blow-up near the origin, whereas the image on the right
depicts the corresponding portion of the kaleidoscope (in the
extended sense given by \Cref{rmk:def_kaleidoscope}).}
\label{fig:reallocusafterblowup}
\end{figure}

\begin{proof}
Without loss of generality, assume that the moment polytope
$\Delta := \mu(M)$ has a standard vertex at the origin,
and that the point at which blow-up is performed is $p=\mu^{-1} (0)$;
this may always be achieved by a shift of the moment map by a constant
plus an appropriate choice of integral basis. 
Let $F_1, \ldots, F_d$ be the facets of $\Delta$,
and let $K$ be the $\Delta$-kaleidoscope.

Let $\widetilde \Delta$ be the moment polytope
of  $(\widetilde M,\widetilde \omega,\widetilde \mu)$,
and let $\widetilde F_1, \ldots, \widetilde F_{d+1}$ be its facets,
where the first $d$ are subsets of those of $\Delta$,
and where $\widetilde F_{d+1}$ is the convex hull of the
$\epsilon$-multiples of the standard basis vectors
($\epsilon$ is the blow-up parameter and will become irrelevant).

The polytope $\widetilde \Delta$ no longer has a vertex
at the origin, yet the extended definition
of kaleidoscope described in \Cref{rmk:def_kaleidoscope}
still applies and yields
\[
   \widetilde K := \left. \widetilde \Delta \times \sqrt{\one}
   \, \middle \slash \, \approx \right. ,
\]
where $(\xi,\sigma) \approx (\xi, (-1)^{u_j} \sigma)$
for each $(\xi,\sigma) \in \widetilde F_j \times \sqrt{\one}$,
$j=1,\ldots,d+1$.
The primitive normal vectors are the old ones
$u_1,\ldots ,u_d$ for $\Delta$ and $u_{d+1} = (-1, \ldots ,-1)$.

Therefore, $\widetilde K$ is obtained from $K$ by removing
a cross-polytope (which is homeomorphic to a ball) with vertices given by the
$\pm\epsilon$-multiples of the standard basis vectors
and identifying the facets of that cross-polytope
pairwise in a fashion similar to that for the toric real locus of $\CC\PP^n$,
i.e., collapsing the ensuing boundary sphere by identifying antipodal points.
This is illustrated in \cref{fig:reallocusafterblowup} when $n=2$.
Equivalently, this transformation amounts to removing from $K$ a ball
and attaching the complement of a ball in $\RR\PP^n$
to the ensuing boundary sphere.
\cref{fig:connected_sum_rp2} illustrates the case $n=2$.
\end{proof}


\begin{figure}[ht]
\centering
\begin{tikzpicture}[scale = 0.6,baseline=-50]
  	\draw[thick,fill=blue!15,fill opacity=10] (0,2) -- (2,0) -- (0,-2) -- (-2,0) -- cycle;
\tikzset{snake it/.style={decorate, decoration={snake, amplitude=0.5mm}}}
	\fill (0,0) circle (2pt);
	\filldraw[fill=blue!30,fill opacity=0.25, draw=white] (0,2.71) -- (0,0) -- (1.14 + 1*2.71,0)  decorate[snake it] {to[out=90, in=-10] (0,2.71)};
	\draw (0,2.71) -- (0,0) -- (1.14 + 1*2.71,0);
	\draw[snake it,dotted,thick] (1.14 + 1*2.71,0) to[out=90, in=-10] (0,2.71);
\begin{scope}[xscale=-1, yscale=1]
	\filldraw[fill=blue!30,fill opacity=0.25, draw=white] (0,2.71) -- (0,0) -- (1.14 + 1*2.71,0)  decorate[snake it] {to[out=90, in=-10] (0,2.71)};
	\draw (0,2.71) -- (0,0) -- (1.14 + 1*2.71,0);
	\draw[snake it,dotted,thick] (1.14 + 1*2.71,0) to[out=90, in=-10] (0,2.71);
\end{scope}
\begin{scope}[xscale=1, yscale=-1]
	\filldraw[fill=blue!30,fill opacity=0.25, draw=white] (0,2.71) -- (0,0) -- (1.14 + 1*2.71,0)  decorate[snake it] {to[out=90, in=-10] (0,2.71)};
	\draw (0,2.71) -- (0,0) -- (1.14 + 1*2.71,0);
	\draw[snake it,dotted,thick] (1.14 + 1*2.71,0) to[out=90, in=-10] (0,2.71);
\end{scope}
\begin{scope}[xscale=-1, yscale=-1]
	\filldraw[fill=blue!30,fill opacity=0.25, draw=white] (0,2.71) -- (0,0) -- (1.14 + 1*2.71,0)  decorate[snake it] {to[out=90, in=-10] (0,2.71)};
	\draw (0,2.71) -- (0,0) -- (1.14 + 1*2.71,0);
	\draw[snake it,dotted,thick] (1.14 + 1*2.71,0) to[out=90, in=-10] (0,2.71);	
\end{scope}
\end{tikzpicture}
\hspace{6em}
\begin{tikzpicture}[scale = 0.6]
  	\draw[thick,fill=blue!15,fill opacity=10] (0,2) -- (2,0) -- (0,-2) -- (-2,0) -- cycle;
  	\draw[thick,fill=white,fill opacity=10] (0,1) -- (1,0) -- (0,-1) -- (-1,0) -- cycle;
\tikzset{snake it/.style={decorate, decoration={snake, amplitude=0.5mm}}}
	\fill (0,0) circle (2pt);
	\filldraw[fill=blue!30,fill opacity=0.25, draw=white] (0,2.71) -- (0,1) -- (1,0) -- (1.14 + 1*2.71,0)  decorate[snake it] {to[out=90, in=-10] (0,2.71)};
	\draw (0,2.71) -- (0,1) -- (1,0) -- (1.14 + 1*2.71,0);
	\draw[snake it,dotted,thick] (1.14 + 1*2.71,0) to[out=90, in=-10] (0,2.71);
	\fill (1,0) circle (2pt);
	\fill (0,1) circle (2pt);
	\filldraw[dashed,fill=blue!30,fill opacity=0] (1,0) -- (0,0) -- (0,1);
	\draw[decoration={markings,
	mark=at position 0.5 with {\arrow{>}}}] (0,1) -- (1,0);
\begin{scope}[xscale=-1, yscale=1]
	\fill (0,0) circle (2pt);
	\filldraw[fill=blue!30,fill opacity=0.25, draw=white] (0,2.71) -- (0,1) -- (1,0) -- (1.14 + 1*2.71,0)  decorate[snake it] {to[out=90, in=-10] (0,2.71)};
	\draw (0,2.71) -- (0,1) -- (1,0) -- (1.14 + 1*2.71,0);
	\draw[snake it,dotted,thick] (1.14 + 1*2.71,0) to[out=90, in=-10] (0,2.71);
	\fill (1,0) circle (2pt);
	\fill (0,1) circle (2pt);
	\filldraw[dashed,fill=blue!30,fill opacity=0] (1,0) -- (0,0) -- (0,1);
\end{scope}
\begin{scope}[xscale=1, yscale=-1]
	\fill (0,0) circle (2pt);
	\filldraw[fill=blue!30,fill opacity=0.25, draw=white] (0,2.71) -- (0,1) -- (1,0) -- (1.14 + 1*2.71,0)  decorate[snake it] {to[out=90, in=-10] (0,2.71)};
	\draw (0,2.71) -- (0,1) -- (1,0) -- (1.14 + 1*2.71,0);
	\draw[snake it,dotted,thick] (1.14 + 1*2.71,0) to[out=90, in=-10] (0,2.71);
	\fill (1,0) circle (2pt);
	\fill (0,1) circle (2pt);
	\filldraw[dashed,fill=blue!30,fill opacity=0] (1,0) -- (0,0) -- (0,1);
\end{scope}
\begin{scope}[xscale=-1, yscale=-1]
	\fill (0,0) circle (2pt);
	\filldraw[fill=blue!30,fill opacity=0.25, draw=white] (0,2.71) -- (0,1) -- (1,0) -- (1.14 + 1*2.71,0)  decorate[snake it] {to[out=90, in=-10] (0,2.71)};
	\draw (0,2.71) -- (0,1) -- (1,0) -- (1.14 + 1*2.71,0);
	\draw[snake it,dotted,thick] (1.14 + 1*2.71,0) to[out=90, in=-10] (0,2.71);	
	\fill (1,0) circle (2pt);
	\fill (0,1) circle (2pt);
	\filldraw[dashed,fill=blue!30,fill opacity=0] (1,0) -- (0,0) -- (0,1);
\end{scope}
\begin{scope}[very thick,decoration={
		markings,
		mark=at position 0.5 with {\arrow{>}}}
	]
	\draw[postaction={decorate}] (1,0)--(0,1);
	\draw[postaction={decorate}] (-1,0)--(0,-1);
\end{scope}
\begin{scope}[very thick,decoration={
		markings,
		mark=at position 0.45 with {\arrow{>}},
		mark=at position 0.55 with {\arrow{>}}}
	]
	\draw[postaction={decorate}] (0,1)--(-1,0);
	\draw[postaction={decorate}] (0,-1)--(1,0);
\end{scope}
\end{tikzpicture}
\caption{On the left, the shaded square represents a disk neighborhood
of the origin.
On the right, the shaded strip between the two squares represents a
M\"obius band.
Removing a disk and attaching a M\"obius band amounts to
taking a connected sum with $\RR\PP^2$.}
\label{fig:connected_sum_rp2}
\end{figure}

We close this section with another simple application
of the kaleidoscope perspective for toric real loci.

\begin{proposition}
\label{prop:no_spheres}
            Let \((M,\omega,\mu,\tau)\) be a \(2n\)-dimensional toric symplectic manifold equipped with a toric real structure. Then the toric real locus \(M^\tau\) satisfies \(H_{n-1}(M^\tau; \Z/2\Z) \ne 0\). In particular, \(M^\tau\) is not homeomorphic to a sphere when \(n \ge 2\).
        \end{proposition}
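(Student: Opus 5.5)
The plan is to exhibit an explicit closed $(n-1)$-dimensional submanifold of $M^\tau$ whose $\Z/2\Z$ fundamental class is nonzero. I will show it is nonzero by finding a closed loop that meets it transversally in exactly one point, and then use that the mod $2$ intersection pairing $H_{n-1}(M^\tau;\Z/2\Z)\times H_1(M^\tau;\Z/2\Z)\to\Z/2\Z$ on a closed manifold is well defined.

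\emph{The hypersurface.} Fix a facet $F_j$ of $\Delta$ and set $N_j:=\mu^{-1}(F_j)$. This is a toric symplectic submanifold of real dimension $2n-2$, namely the preimage of a facet. By \cref{rmk:real_locus_of_submfld}, $\tau$ restricts to a toric real structure on $N_j$. Hence $\Sigma_j:=M^\tau\cap N_j=(\mu|_{M^\tau})^{-1}(F_j)$ is a toric real locus. By \cref{prop:duistermaat} it is a compact connected $(n-1)$-manifold, so it has a fundamental class $[\Sigma_j]\in H_{n-1}(M^\tau;\Z/2\Z)$.

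Near a point over the relative interior of $F_j$ (the case $k=n-1$), \cref{lem:delzant_normal_form_real} shows the following. $M^\tau$ locally looks like $\{\pm1\}^{n-1}\times V\times(-\epsilon,\epsilon)$, and $\Sigma_j$ is the slice $\{x_1=0\}$. So $\Sigma_j$ is locally a hyperplane, and a path crossing it along the $x_1$-direction is transverse. In kaleidoscope terms (\cref{coroll:kaleidoscope_vs_real_locus}), such a path runs from the cell $[\Delta_\sigma]$ through the interior of the facet $\sigma(F_j)$ into the cell $[\Delta_{(-1)^{u_j}\sigma}]$.

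\emph{The loop.} Start at an interior point of $[\Delta_\sigma]$ and cross $\sigma(F_j)$ once, as above, to reach $[\Delta_{(-1)^{u_j}\sigma}]$. I then need to return to $[\Delta_\sigma]$ without touching $\Sigma_j$, that is, crossing only facets $F_i$ with $i\neq j$, each through the interior of the facet. Each such crossing multiplies the orthant label by $(-1)^{u_i}$. It therefore suffices that the elements $(-1)^{u_i}$ with $i\ne j$ generate $\sqrt{\one}$.

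To see this, choose a vertex $v$ of $\Delta$ not lying on $F_j$; one exists because $F_j$ is a proper face of the $n$-dimensional polytope. By unimodularity, the $n$ primitive normals of the facets through $v$ form a $\ZZ$-basis of $\ZZ^n$. Their mod $2$ reductions therefore form a basis of $(\Z/2\Z)^n$, so the corresponding $(-1)^{u_i}$ generate $\sqrt{\one}$. None of these facets is $F_j$.

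Concatenating the crossings gives a chain of cells from $[\Delta_{(-1)^{u_j}\sigma}]$ back to $[\Delta_\sigma]$. Inside each cell, the path moves through the interior, which is convex, and it passes between cells only through relative interiors of facets. It therefore avoids all lower-dimensional faces, and in particular avoids $\Sigma_j$.

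\emph{Conclusion.} The resulting closed loop $\gamma$ meets $\Sigma_j$ transversally in exactly one point, so $[\Sigma_j]\cdot[\gamma]=1\in\Z/2\Z$. Hence $[\Sigma_j]\neq0$ and $H_{n-1}(M^\tau;\Z/2\Z)\ne0$. For $n\ge2$ we have $H_{n-1}(S^n;\Z/2\Z)=0$, which excludes spheres.

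The main obstacle I expect is making the transversality and intersection-count argument rigorous, given that $q$ is only a homeomorphism. I plan to handle this by using the smooth local normal form at the single crossing point, and by noting that $\gamma$ is disjoint from $\Sigma_j$ elsewhere by construction. If that proves cumbersome, I will fall back on the cellular mod $2$ chain complex of $K$. In that complex, the sum of the $(n-1)$-cells coming from $F_j$ is a cycle, and the loop constructed above yields a cellular cocycle evaluating to $1$ on it.
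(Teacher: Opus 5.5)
Your argument is correct, but it follows a different route from the paper's.

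The paper coarsens the kaleidoscope CW structure so that the whole star-shaped $\Delta$-cluster is a single $n$-cell. Its $(n-1)$-cells are the $2^{n-1}(d-n)$ glued pairs of outer facets. Each of these appears twice in the boundary of that one $n$-cell, so $\partial_n=0$ mod $2$. There are therefore no nonzero $(n-1)$-boundaries, and it suffices to exhibit any nonzero mod $2$ cycle: the sum of all $(n-1)$-cells, which is the union of the $\Sigma_j$ over the non-coordinate facets.

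You instead fix an arbitrary facet $F_j$ and detect $[\Sigma_j]$ with a dual loop meeting it exactly once. Unimodularity at a vertex off $F_j$ supplies precisely what is needed: the elements $(-1)^{u_i}$ with $i\neq j$ generate $\sqrt{\one}$, so the loop closes up without recrossing $F_j$.

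The paper's version is shorter and purely cellular. Because $B_{n-1}=0$ in the coarse structure, and distinct $\Sigma_j$ share no $(n-1)$-cells, the same approach also gives $d-n$ independent classes with no extra work, matching $b_{2n-2}(M)$. It does rely on the cluster picture, with its standard vertex, to get the single $n$-cell.

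Your version is more intrinsic. It shows $[\Sigma_j]\neq 0$ for every facet, including the coordinate ones, and it pinpoints where unimodularity is used. The cost is that you invoke mod $2$ Poincar\'e duality plus a smoothing step near the crossing, which the local normal form handles as you describe. It also produces one class at a time rather than a basis.

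Your cellular fallback avoids the smoothness issue entirely if you make it precise as follows. For each $(n-1)$-cell $e$, let $\varphi(e)$ be the number of times $\gamma$ crosses $e$, mod $2$. Then $\delta\varphi=0$, because $\gamma$ enters and leaves each $n$-cell equally often. Also $\varphi(c)=1$, so $c$ cannot be a boundary.
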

        
        \begin{proof}
            The kaleidoscope model induces a CW complex structure on \(M^\tau\) with a single \(n\)-cell and \(2^{n-1}(d-n)\) cells of dimension \(n-1\), where \(d\) is the number of facets of the moment polytope \(\Delta = \mu(M)\).
            Each \((n-1)\)-cell corresponds to a pair of facets in the \(\Delta\)-cluster that are glued together.
            Now consider the sum of all \((n-1)\)-cells.
            This cellular chain is actually a cycle with
            $\Z/2\Z$-coefficients because, in its boundary,
            each \((n-2)\)-cell occurs $2^{n-1}$ times, and this
            is even when $n \geq 2$.
            Moreover, this \((n-1)\)-chain is not a boundary since the cellular boundary of the unique \(n\)-cell vanishes in \(\Z/2\Z\)-coefficients (each \((n-1)\)-cell corresponds to two facets in the boundary of the \(\Delta\)-cluster).
        \end{proof}

        Such a result does not hold with coefficients in $\ZZ$;
        for example, \(H_2(\RP{3};\Z) = 0\).

\begin{remark}[Mimic Philosophy]
The fact that \(H_{n-1}(M^\tau; \Z/2\Z) \ne 0\)
aligns with the following \textit{mimic philosophy},
since $H_{2n-2}(M; \RR) \ne 0$.
The credo that toric real loci topologically
behave in a manner similar to their ambient toric symplectic manifolds,
with the $T$-action replaced by the $\sqrt{\one}$-action,
real coefficients replaced by $\ZZ/2\ZZ$, and degrees halved
goes back to Borel and Haefliger~\cite{BorelHaefliger}
in the context of complex analytic varieties and to Davis and
Januszkiewicz~\cite[Theorem 4.14]{DavisJanuszkiewicz91}
in the context of small covers.
Concretely, there is a degree-doubling ring isomorphism between the
homology with $\ZZ/2\ZZ$ coefficients of a toric manifold and that of its toric real locus.
This is proven in Holm's PhD
thesis~\cite[Thm. 5.5.4 and Cor. 5.5.8.]{Holm02}
in the general context of GKM (and mod 2 GKM) spaces 
and reproven in a different manner by Haug~\cite[Prop. 3.4]{Haug13}.
At the level of vector spaces, this isomorphism was given already by
Duistermaat~\cite[Thm 3.1]{Duistermaat83}.
Goldin and Holm~\cite{GoldinHolm04} further showed that the mimic philosophy
holds in the more general context of symplectic reduction.
Such isomorphisms have been arousing significant interest within
real algebraic geometry, more specifically in
the context of \textit{maximal varieties}; see, for instance,~\cite{Fu}.
\end{remark}


\section{Euler Characteristic and Signature}
\label{sec:euler_signature}

A formula for the Euler characteristic of a toric real locus
follows directly from \cref{prop:kaleidoscope_euler,coroll:kaleidoscope_vs_real_locus}.

\begin{theorem}[Euler Characteristic of Toric Real Locus]
\label{thm:euler_charact}
Let $(M, \omega, \mu)$ be a $2n$-dimensional toric symplectic manifold,
let $\tau$ be a toric real structure on it, and let $\Delta$ be the moment polytope.
Let $a_k$ be the number of $k$-faces in $\Delta$.
Then, the Euler characteristic of the toric real locus is
\[
   \sum_{k=0}^n (-1)^k 2^k a_k \ .
\]
\end{theorem}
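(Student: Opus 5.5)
The plan is to combine \cref{coroll:kaleidoscope_vs_real_locus} with \cref{prop:kaleidoscope_euler}, after first reducing to the case of a standard vertex.

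First, I will apply an element of $\AGL(n,\ZZ)$ to the dual of the Lie algebra, as explained at the start of \cref{sec:kaleidoscope_model}. This amounts to translating the moment map by a constant and changing the integral basis of $\ft$. The result is a weakly isomorphic toric symplectic $\TT^n$-manifold, and the moment polytope is replaced by one with a standard vertex at the origin. This affine-integral transformation preserves the face lattice, so every $a_k$ is unchanged. The real locus is also unchanged up to diffeomorphism: the new toric real structure can be taken to be $\tau$ itself, since $\mu\circ\tau=\mu$ still holds after adding a constant and reparametrizing the torus. By \cref{rmk:toric_lagrangian}, any other choice of toric real structure yields a diffeomorphic locus.

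Second, with $\Delta$ now having a standard vertex at the origin, \cref{coroll:kaleidoscope_vs_real_locus} gives a homeomorphism $h\colon K\to M^\tau$ from the $\Delta$-kaleidoscope. The Euler characteristic is a homotopy invariant, hence a homeomorphism invariant, so $\chi(M^\tau)=\chi(K)$. \Cref{prop:kaleidoscope_euler} then gives $\chi(K)=\sum_{k=0}^n(-1)^k2^ka_k$, which is the claimed formula.

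The only point needing care is the cell count inside \cref{prop:kaleidoscope_euler}. There, each $k$-face of $\Delta$ gives $2^n$ copies in $\Delta\times\sqrt{\one}$, and these are identified in orbits of size exactly $2^{n-k}$. The orbit size is exactly $2^{n-k}$ because of unimodularity: the normals $u_1,\dots,u_{n-k}$ of the facets through the face extend to a $\ZZ$-basis, so their mod~2 reductions are linearly independent. Consequently the subgroup they generate, $G\cap\sqrt{\one}$, has order $2^{n-k}$, which matches the isotropy computation in the proof of \cref{coroll:kaleidoscope_vs_real_locus}. I will also check that the induced decomposition is a genuine CW structure, with each open cell embedding in $K$, so that cellular Euler characteristic applies. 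I expect this to be the only mildly delicate step, and I would verify it using the PL charts of \cref{rmk:pl_str}.
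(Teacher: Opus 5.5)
Your proposal is correct and follows the paper's own argument. You reduce to a standard vertex via $\AGL(n,\ZZ)$, as at the start of \cref{sec:kaleidoscope_model}, then combine the homeomorphism of \cref{coroll:kaleidoscope_vs_real_locus} with the cell count of \cref{prop:kaleidoscope_euler}; your extra check that each $k$-face has identification orbits of size exactly $2^{n-k}$ (the facet normals at a vertex form a $\ZZ$-basis, hence are independent mod 2) makes explicit a point the paper leaves implicit.
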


As a reality check, from \cref{ex:euler_simplex},
we recover the Euler characteristic of a real
projective space, i.e., the real
locus of $\CC \PP^n$, whose moment polytope is a standard simplex.

\begin{remark}[Parity of Euler Characteristic]
\Cref{thm:euler_charact} confirms that
the parity of the Euler characteristic of a toric real locus
is the same as that of the number $a_0$ of vertices in $\Delta$, which is
equal to the Euler characteristic of the underlying toric symplectic manifold.
This coincidence of parities may already be deduced from Duistermaat's result~\cite{Duistermaat83} or from Smith theory and
has been observed multiple times;
see, for instance, \cite[Thm III.4.3]{Bredon1972} or \cite{Zagier90}.

Since the Euler characteristic of any odd-dimensional compact manifold vanishes,
we conclude that $\sum_{k=0}^n (-1)^k 2^k a_k = 0$ when $n$ is odd,
and, in particular, that $a_0$ must be even.
(The fact that $a_0$ must be even when $n$ is odd also follows from
Poincar\'e duality for the toric symplectic manifold.)
\end{remark}

The first few cases are easy to work out in detail.
\begin{itemize}
    \item
    When $n=2$, a polygon $\Delta$ has $a_1=a_0$; hence, the 
toric real locus is a surface with Euler characteristic
\[
   \chi (M^\tau) = a_0 - 2 a_0 + 2^2 = 4 - a_0 = 4 - \chi (M) \ .
\]
\item
When $n=3$, a unimodular polytope $\Delta$ has
$a_1 = \frac 32 a_0$ and $a_2 = 2 + \frac 12 a_0$; hence, the 
Euler characteristic of the toric real locus is
\[
   \chi (M^\tau) = a_0 - 3 a_0 + (8+2a_0) - 8 = 0\ ,
\]
as expected for any odd-dimensional compact manifold.
\item
When $n=4$, a unimodular polytope $\Delta$ has
$a_1 = 2 a_0$ and $a_2 = a_0 + a_3$; hence, the 
Euler characteristic of the toric real locus is
\[
   \chi (M^\tau) = a_0 - 4 a_0 + (4a_0+4a_3) - 8a_3 + 16 = a_0 - 4 a_3 + 16 \ ,
\]
and it already depends on the number of facets in addition to the number of vertices.
\end{itemize}

\begin{example}
\label{ex:two_fano}
A pair of unimodular polytopes with $n=4$, with the same number of vertices 
but different numbers of 3-faces may be found within Fano polytopes.
For instance, let $Z_1$ and $K_4$ be the toric Fano manifolds listed with
the numbers 122 and 104 by Batyrev~\cite[\S 4]{Batyrev1999}, respectively.
These manifolds have the following Betti numbers $b_2$ and $b_4$,
while their corresponding moment polytopes, $\Delta_{_{Z_1}}$ and $\Delta_{_{K_4}}$,
have the following numbers $a_0$ and $a_3$ of vertices and facets.
The concrete moment polytopes may be found with the help of~\cite{ObroSite}.
\[
\begin{array}{|c|c|c||c|c|c|}
\hline
  & b_2 & b_4 & & a_0 & a_3  \\
\hline
Z_1 & 4 & 8 & \Delta_{_{Z_1}} & 18 & 8 \\
\hline
K_4 & 5 & 6 & \Delta_{_{K_4}} & 18 & 9 \\
\hline
\end{array}
\]
It follows that the Euler characteristic of these toric symplectic
manifolds is the same $\chi (Z_1) = \chi (K_4) = 18$,
yet it is different for their toric real loci, namely
$\chi (Z_1^\tau) = 2$ and  $\chi (K_4^\tau) = -2$.
For instance, $K_4$ is the product of $\CC\PP^2$ with
$\CC\PP^2\#_3 \overline{\CC\PP^2}$; hence, by \cref{rmk:product,prop:kaleidoscope_blowup}, its toric real locus
$K_4^\tau$ is $\RR\PP^2 \times \#_4 \RR\PP^2$,
which confirms $\chi (K_4^\tau) = 1 \times (2-4) = -2$.
\end{example}


\Cref{thm:euler_charact} allows a simple alternative proof for the
congruence properties of the Euler characteristic with the signature 
in the case where the dimension is a multiple of four.
Whereas the signature $\sigma(M)$ of a compact oriented manifold of real dimension $4m$
always satisfies
\[
   \sigma (M) \equiv \chi (M) \mod 2
\]
(see, for instance, \cite[p.~164]{May}), this congruence is upgraded
to the statement in the following corollary, first pointed out
by Hirzebruch~\cite[p.777]{Hirzebruch_Coll_Works_I_II}
for the more general case where the manifold admits an almost complex structure (see also \cite[Theorem 1.3]{ADG}).

\begin{corollary}[Euler Characteristic vs.\ Signature of Toric Manifolds for Dimensions $4m$]
    \label{coroll:signature}
    Let $(M, \omega, \mu)$ be a $4m$-dimensional toric symplectic manifold.
    Then
\[
   \sigma (M) \equiv (-1)^m \chi (M) \mod 4.
\]
\end{corollary}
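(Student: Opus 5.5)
The plan is to reduce both sides of the congruence to the even Betti numbers of $M$, and to use \cref{thm:euler_charact} to read the right-hand side as the Euler characteristic of the toric real locus.

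\emph{Step 1: the kaleidoscope side.} Put $n=2m$. Every vertex of a unimodular $n$-polytope lies on exactly $n$ edges, so $2a_1 = n a_0$, that is, $a_1 = m a_0$. In the formula of \cref{thm:euler_charact}, every term with $k\ge 2$ is divisible by $4$. Hence
\[
   \chi(M^\tau) \equiv a_0 - 2a_1 = (1-2m)\,a_0 \equiv (-1)^m a_0 = (-1)^m\chi(M) \mod 4,
\]
because $1-2m \equiv 1$ or $3 \pmod 4$ according as $m$ is even or odd. So the corollary is equivalent to $\sigma(M)\equiv \chi(M^\tau) \pmod 4$. It therefore suffices to show $\sigma(M)\equiv(-1)^m\chi(M) \pmod 4$, and I would prove that directly on $M$.

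\emph{Step 2: the signature in terms of Betti numbers.} By Delzant's construction, $M$ is a smooth projective toric variety, hence Kähler. Its cohomology is concentrated in even degrees, with $h^{p,q}=0$ for $p\neq q$, so $h^{p,p}=b_{2p}$. The Hodge index theorem gives $\sigma(M)=\sum_{p,q}(-1)^p h^{p,q}$ for a compact Kähler manifold of real dimension $4m$. This becomes
\[
   \sigma(M)=\sum_{p=0}^{2m}(-1)^p b_{2p}, \qquad \chi(M)=\sum_{p=0}^{2m} b_{2p}.
\]
As a sanity check, this is consistent with $\sigma(M)\equiv\chi(M)\pmod 2$.

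\emph{Step 3: the congruence.} Compute
\[
   (-1)^m\chi(M)-\sigma(M)=\sum_{p}\bigl((-1)^m-(-1)^p\bigr)b_{2p}
   = 2(-1)^m\sum_{p\not\equiv m \ (2)} b_{2p}.
\]
Poincaré duality gives $b_{2p}=b_{4m-2p}$, and the involution $p\mapsto 2m-p$ preserves the parity class $p\not\equiv m \pmod 2$. This involution has no fixed points in that class, since its only fixed point is $p=m$. So the indices pair up, the inner sum is even, and the whole expression is divisible by $4$.

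\emph{Main obstacle.} I expect the delicate point to be justifying the signature formula in Step 2, namely that $M$ carries a Kähler structure with the pure Hodge decomposition $h^{p,q}=0$ for $p\ne q$. I would cite that toric symplectic manifolds are Kähler toric varieties, whose cohomology is generated by algebraic cycles (the classes of the torus-invariant divisors).

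If one prefers to stay within the kaleidoscope framework, Step 1 can be combined with the mimic isomorphism $H_{2*}(M;\ZZ/2\ZZ)\cong H_*(M^\tau;\ZZ/2\ZZ)$. That route expresses $\chi(M^\tau)$ through the $b_{2p}$ with alternating signs $(-1)^p$, matching Step 2, but it only yields equality mod $2$ directly. For that reason I would use the Hodge-theoretic route as the main argument.
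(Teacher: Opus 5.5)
Your argument is correct, but it proves the congruence by a different route from the paper's. Your Step 1 ends up logically idle. After rewriting the goal as $\sigma(M)\equiv\chi(M^\tau) \pmod 4$, you return to the original statement and prove it on $M$ alone. For that you use only $\sigma(M)=\sum_p(-1)^p b_{2p}$ (the Hodge index theorem plus $h^{p,q}=0$ for $p\neq q$) and the Poincaré-duality pairing $b_{2p}=b_{2(2m-p)}$ on the indices $p\not\equiv m \pmod 2$. That pairing argument is sound.

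The paper instead keeps your Step 1 as the source of the mod-$4$ information. It then closes with the exact identity $\sigma(M)=\chi(M^\tau)$ (Ehlers), derived from the same Hodge formula together with three further inputs:
\begin{itemize}
\item torsion-freeness of $H_*(M;\ZZ)$;
\item the isomorphism $H_{2p}(M;\ZZ/2\ZZ)\cong H_p(M^\tau;\ZZ/2\ZZ)$;
\item independence of the Euler characteristic from the coefficient field.
\end{itemize}
What each route buys:
\begin{itemize}
\item Yours is more economical. It needs no real locus and no mimic isomorphism, and it works verbatim for any compact K\"ahler $4m$-manifold whose cohomology is purely of type $(p,p)$.
\item The paper's yields the stronger equality $\sigma(M)=\chi(M^\tau)$. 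It also makes the kaleidoscope count of \cref{thm:euler_charact} do the actual work, which is the reason the corollary is presented there.
\end{itemize}

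Your closing remark is mistaken: the mimic route does not give only a mod-$2$ statement. Since $M$ has no torsion, $b_{2p}(M)=\dim H_{2p}(M;\ZZ/2\ZZ)=\dim H_p(M^\tau;\ZZ/2\ZZ)$. Since $\chi(M^\tau)$ may be computed with $\ZZ/2\ZZ$ coefficients, $\chi(M^\tau)=\sum_p(-1)^p b_{2p}(M)=\sigma(M)$ holds exactly. Combined with your Step 1, this already finishes the proof, and it is precisely the paper's argument.
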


\begin{proof}
    For a simple polytope $\Delta$,
    the number $a_1$ of edges and the number $a_0$ of vertices
    are related by $a_1 = \frac{1}{2} a_0 \cdot \dim \Delta$.
    By \Cref{thm:euler_charact}, we obtain, in the case where $\dim \Delta = 2m$, that
    \[
    \chi (M^\tau) \equiv a_0-2a_1 = a_0 - 2m a_0 \mod 4.
    \]
    By the fact that $\chi(M) = a_0$,
    we conclude when $m$ is even that $\chi (M^\tau) \equiv \chi(M) \mod 4$,
    and when $m$ is odd that $\chi (M^\tau) \equiv -\chi(M) \mod 4$, hence
\[
   \chi (M^\tau) \equiv (-1)^m \chi(M) \mod 4.
\]
On the other hand, in the toric case, we have~\cite[Satz 6]{Ehlers75}
\[
\sigma(M) = \chi (M^\tau) .
\]
For completeness, we next point out the fundamental facts that lead to this last equality.

First, by the Hodge index theorem, the signature of a compact
K\"ahler manifold $M$ of even complex dimension $n$ is
\[
\sigma(M) = \sum_{p,q=0}^{n} (-1)^p\, h^{p,q}(M),
\]
where $h^{p,q}(M) = \dim H^{p,q}(M)$ are the Hodge numbers
(see, for instance, \cite[Coroll.\ 3.3.18]{Huybrechts}).

Second, for a toric $M$, all off-diagonal Hodge numbers,
$h^{p,q}(M)$ with $p \neq q$, vanish
(see, for instance, \cite[Thm.\ 9.4.11]{CoxLittleSchenck}).
Therefore, the Betti numbers are $b_{2p} (M) = h^{p,p}(M)$.

Third, because $M$ has no torsion
(see, for instance,~\cite[\S 5.2]{Fulton}) and
there is an isomorphism
$H_{2p} (M;\ZZ / 2\ZZ) \cong H_{p} (M^\tau ; \ZZ / 2\ZZ)$
(see, \cite[Coroll.\ 5.8]{BGH04}),
we have
\[
b_{2p} (M) = \dim H_{2p} (M;\ZZ / 2\ZZ) = \dim H_{p} (M^\tau ; \ZZ / 2\ZZ).
\]

Finally, the Euler characteristic
of a compact manifold
is independent of the choice of the coefficient field for homology
(see, for instance, \cite[Ex.~41 of \S 2.2]{HatcherBook}),
and thus\footnote{The observation that
$\sigma(M) = \sum_{p=0}^{n} (-1)^p \, b_{2p} (M)$ may be found
also in Metzler's work \cite[p.3519]{Metzler2000}.}
\[
\chi (M^\tau) = \sum_{p=0}^{n} (-1)^p \, \dim H_{p} (M^\tau ; \ZZ / 2\ZZ)
= \sum_{p=0}^{n} (-1)^p \, b_{2p} (M) = \sum_{p=0}^{n} (-1)^p h^{p,p}(M) = \sigma(M).\qedhere
\]
\end{proof}

For reference, we display in \Cref{table:124euler} the Euler characteristic $\chi$ and signature $\sigma$
for each of the 124 smooth toric Fano fourfolds, noting that $\sigma(M) = \chi (M^\tau)$.

\bgroup
\def\arraystretch{1.2}
\begin{table} 
		\begin{minipage}[t]{0.2\textwidth}
			\begin{tabular}{|c|c|c||c|c|}
            \hline
             & $b_2$ & $b_4$ & $\chi$ & $\sigma$ \\
            \hline
            \(\mathbb{P}^4\) & 1 & 1 & 5 & 1 \\
            \hline
            \(B_1\) & 2 & 2 & 8 & 0 \\
            \hline
            \(B_2\) & 2 & 2 & 8 & 0 \\
            \hline
            \(B_3\) & 2 & 2 & 8 & 0 \\
            \hline
            \(B_4\) & 2 & 2 & 8 & 0 \\
            \hline
            \(B_5\) & 2 & 2 & 8 & 0 \\
            \hline
            \(C_1\) & 2 & 3 & 9 & 1 \\
            \hline
            \(C_2\) & 2 & 3 & 9 & 1 \\
            \hline
            \(C_3\) & 2 & 3 & 9 & 1 \\
            \hline
            \(C_4\) & 2 & 3 & 9 & 1 \\
            \hline
            \(E_1\) & 3 & 3 & 11 & -1 \\
            \hline
            \(E_2\) & 3 & 3 & 11 & -1 \\
            \hline
            \(E_3\) & 3 & 3 & 11 & -1 \\
            \hline
            \(D_{1}\) & 3 & 4 & 12 & 0 \\
            \hline
            \(D_{2}\) & 3 & 4 & 12 & 0 \\
            \hline
            \(D_{3}\) & 3 & 4 & 12 & 0 \\
            \hline
            \(D_{4}\) & 3 & 4 & 12 & 0 \\
            \hline
            \(D_{5}\) & 3 & 4 & 12 & 0 \\
            \hline
            \(D_{6}\) & 3 & 4 & 12 & 0 \\
            \hline
            \(D_{7}\) & 3 & 4 & 12 & 0 \\
            \hline
            \(D_{8}\) & 3 & 4 & 12 & 0 \\
            \hline
            \(D_{9}\) & 3 & 4 & 12 & 0 \\
            \hline
            \(D_{10}\) & 3 & 4 & 12 & 0 \\
            \hline
            \(D_{11}\) & 3 & 4 & 12 & 0 \\
            \hline
            \(D_{12}\) & 3 & 4 & 12 & 0 \\
            \hline
            \(D_{13}\) & 3 & 4 & 12 & 0 \\
            \hline
            \(D_{14}\) & 3 & 4 & 12 & 0 \\
            \hline
            \(D_{15}\) & 3 & 4 & 12 & 0 \\
            \hline
            \(D_{16}\) & 3 & 4 & 12 & 0 \\
            \hline
            \(D_{17}\) & 3 & 4 & 12 & 0 \\
            \hline
            \(D_{18}\) & 3 & 4 & 12 & 0 \\
            \hline
            \end{tabular}
		\end{minipage}\hfill
		\begin{minipage}[t]{0.2\textwidth}
			\begin{tabular}{|c|c|c||c|c|}
            \hline
             & $b_2$ & $b_4$ & $\chi$ & $\sigma$ \\
            \hline
            \(D_{19}\) & 3 & 4 & 12 & 0 \\
            \hline
            \(G_1\) & 3 & 5 & 13 & 1 \\
            \hline
            \(G_2\) & 3 & 5 & 13 & 1 \\
            \hline
            \(G_3\) & 3 & 5 & 13 & 1 \\
            \hline
            \(G_4\) & 3 & 5 & 13 & 1 \\
            \hline
            \(G_5\) & 3 & 5 & 13 & 1 \\
            \hline
            \(G_6\) & 3 & 5 & 13 & 1 \\
            \hline
            \(H_{1}\) & 4 & 5 & 15 & -1 \\
            \hline
            \(H_{2}\) & 4 & 5 & 15 & -1 \\
            \hline
            \(H_{3}\) & 4 & 5 & 15 & -1 \\
            \hline
            \(H_{4}\) & 4 & 5 & 15 & -1 \\
            \hline
            \(H_{5}\) & 4 & 5 & 15 & -1 \\
            \hline
            \(H_{6}\) & 4 & 5 & 15 & -1 \\
            \hline
            \(H_{7}\) & 4 & 5 & 15 & -1 \\
            \hline
            \(H_{8}\) & 4 & 5 & 15 & -1 \\
            \hline
            \(H_{9}\) & 4 & 5 & 15 & -1 \\
            \hline
            \(H_{10}\) & 4 & 5 & 15 & -1 \\
            \hline
            \(L_{1}\) & 4 & 6 & 16 & 0 \\
            \hline
            \(L_{2}\) & 4 & 6 & 16 & 0 \\
            \hline
            \(L_{3}\) & 4 & 6 & 16 & 0 \\
            \hline
            \(L_{4}\) & 4 & 6 & 16 & 0 \\
            \hline
            \(L_{5}\) & 4 & 6 & 16 & 0 \\
            \hline
            \(L_{6}\) & 4 & 6 & 16 & 0 \\
            \hline
            \(L_{7}\) & 4 & 6 & 16 & 0 \\
            \hline
            \(L_{8}\) & 4 & 6 & 16 & 0 \\
            \hline
            \(L_{9}\) & 4 & 6 & 16 & 0 \\
            \hline
            \(L_{10}\) & 4 & 6 & 16 & 0 \\
            \hline
            \(L_{11}\) & 4 & 6 & 16 & 0 \\
            \hline
            \(L_{12}\) & 4 & 6 & 16 & 0 \\
            \hline
            \(L_{13}\) & 4 & 6 & 16 & 0 \\
            \hline
            \(I_{1}\) & 4 & 6 & 16 & 0 \\
            \hline
            \end{tabular}
		\end{minipage}\hfill
		\begin{minipage}[t]{0.2\textwidth}
			\begin{tabular}{|c|c|c||c|c|}
            \hline
             & $b_2$ & $b_4$ & $\chi$ & $\sigma$ \\
            \hline
            \(I_{2}\) & 4 & 6 & 16 & 0 \\
            \hline
            \(I_{3}\) & 4 & 6 & 16 & 0 \\
            \hline
            \(I_{4}\) & 4 & 6 & 16 & 0 \\
            \hline
            \(I_{5}\) & 4 & 6 & 16 & 0 \\
            \hline
            \(I_{6}\) & 4 & 6 & 16 & 0 \\
            \hline
            \(I_{7}\) & 4 & 6 & 16 & 0 \\
            \hline
            \(I_{8}\) & 4 & 6 & 16 & 0 \\
            \hline
            \(I_{9}\) & 4 & 6 & 16 & 0 \\
            \hline
            \(I_{10}\) & 4 & 6 & 16 & 0 \\
            \hline
            \(I_{11}\) & 4 & 6 & 16 & 0 \\
            \hline
            \(I_{12}\) & 4 & 6 & 16 & 0 \\
            \hline
            \(I_{13}\) & 4 & 6 & 16 & 0 \\
            \hline
            \(I_{14}\) & 4 & 6 & 16 & 0 \\
            \hline
            \(I_{15}\) & 4 & 6 & 16 & 0 \\
            \hline
            \(M_1\) & 4 & 7 & 17 & 1 \\
            \hline
            \(M_2\) & 4 & 7 & 17 & 1 \\
            \hline
            \(M_3\) & 4 & 7 & 17 & 1 \\
            \hline
            \(M_4\) & 4 & 7 & 17 & 1 \\
            \hline
            \(M_5\) & 4 & 7 & 17 & 1 \\
            \hline
            \(J_1\) & 4 & 7 & 17 & 1 \\
            \hline
            \(J_2\) & 4 & 7 & 17 & 1 \\
            \hline
            \(Q_{1}\) & 5 & 8 & 20 & 0 \\
            \hline
            \(Q_{2}\) & 5 & 8 & 20 & 0 \\
            \hline
            \(Q_{3}\) & 5 & 8 & 20 & 0 \\
            \hline
            \(Q_{4}\) & 5 & 8 & 20 & 0 \\
            \hline
            \(Q_{5}\) & 5 & 8 & 20 & 0 \\
            \hline
            \(Q_{6}\) & 5 & 8 & 20 & 0 \\
            \hline
            \(Q_{7}\) & 5 & 8 & 20 & 0 \\
            \hline
            \(Q_{8}\) & 5 & 8 & 20 & 0 \\
            \hline
            \(Q_{9}\) & 5 & 8 & 20 & 0 \\
            \hline
            \(Q_{10}\) & 5 & 8 & 20 & 0 \\
            \hline
            \end{tabular}
		\end{minipage}\hfill
		\begin{minipage}[t]{0.2\textwidth}
			\begin{tabular}{|c|c|c||c|c|}
            \hline
             & $b_2$ & $b_4$ & $\chi$ & $\sigma$ \\
            \hline
            \(Q_{11}\) & 5 & 8 & 20 & 0 \\
            \hline
            \(Q_{12}\) & 5 & 8 & 20 & 0 \\
            \hline
            \(Q_{13}\) & 5 & 8 & 20 & 0 \\
            \hline
            \(Q_{14}\) & 5 & 8 & 20 & 0 \\
            \hline
            \(Q_{15}\) & 5 & 8 & 20 & 0 \\
            \hline
            \(Q_{16}\) & 5 & 8 & 20 & 0 \\
            \hline
            \(Q_{17}\) & 5 & 8 & 20 & 0 \\
            \hline
            \(K_1\) & 5 & 6 & 18 & -2 \\
            \hline
            \(K_2\) & 5 & 6 & 18 & -2 \\
            \hline
            \(K_3\) & 5 & 6 & 18 & -2 \\
            \hline
            \(K_4\) & 5 & 6 & 18 & -2 \\
            \hline
            \(R_1\) & 5 & 9 & 21 & 1 \\
            \hline
            \(R_2\) & 5 & 9 & 21 & 1 \\
            \hline
            \(R_3\) & 5 & 9 & 21 & 1 \\
            \hline
            \(\) & 5 & 9 & 21 & 1 \\
            \hline
            \(U_1\) & 6 & 10 & 24 & 0 \\
            \hline
            \(U_2\) & 6 & 10 & 24 & 0 \\
            \hline
            \(U_3\) & 6 & 10 & 24 & 0 \\
            \hline
            \(U_4\) & 6 & 10 & 24 & 0 \\
            \hline
            \(U_5\) & 6 & 10 & 24 & 0 \\
            \hline
            \(U_6\) & 6 & 10 & 24 & 0 \\
            \hline
            \(U_7\) & 6 & 10 & 24 & 0 \\
            \hline
            \(U_8\) & 6 & 10 & 24 & 0 \\
            \hline
            \(\tilde{V}^4\) & 5 & 11 & 23 & 3 \\
            \hline
            \(V^4\) & 6 & 16 & 30 & 6 \\
            \hline
            \(S_2 \times S_2\) & 6 & 11 & 25 & 1 \\
            \hline
            \(S_2 \times S_3\) & 7 & 14 & 30 & 2 \\
            \hline
            \(S_3 \times S_3\) & 8 & 18 & 36 & 4 \\
            \hline
            \(Z_1\) & 4 & 8 & 18 & 2 \\
            \hline
            \(Z_2\) & 4 & 8 & 18 & 2 \\
            \hline
            \(W\) & 5 & 12 & 24 & 4 \\
            \hline
            \end{tabular}
		\end{minipage}
        \vspace{2ex}
        \caption{The Euler characteristic $\chi$ and signature $\sigma$ of each of the 124 smooth toric Fano fourfolds, following the notation from \cite{Batyrev1999,Sato}.}
        \label{table:124euler}
    	\end{table}
\egroup

\begin{remark}
The signature of an orientable toric real locus always vanishes:
\[
\sigma (M^\tau) = 0.
\]
This is because these manifolds
always admit an orientation-reversing diffeomorphism,
for instance, that given by the element $(-1,1,1,\ldots,1)$
of $\sqrt{\one}$.
(If the dimension is not a multiple of $4$, the signature vanishes
by definition; hence, this observation is only relevant for those
whose dimension is a multiple of 4.) 
\end{remark}


\section{Orientability of Toric Real Loci}
\label{sec:orientability}

A simple criterion for the orientability of a toric real locus
in terms of the moment polytope follows directly from
the orientability of a kaleidoscope (\cref{prop:kaleidoscope_orientability})
and the fact that the toric real locus and the kaleidoscope are
homeomorphic (\cref{coroll:kaleidoscope_vs_real_locus}):

\begin{theorem}[Orientability of Toric Real Locus]
\label{thm:orientability}
Let $(M, \omega, \mu, \tau)$ be a toric symplectic manifold
equipped with a toric real structure.
Choose an integral lattice basis so that the moment polytope,
$\Delta$, has a standard vertex.
The toric real locus is orientable if and only if
each primitive normal vector to a facet of $\Delta$
has an odd number of odd entries.
\end{theorem}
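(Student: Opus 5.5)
The plan is to combine \cref{coroll:kaleidoscope_vs_real_locus} with \cref{prop:kaleidoscope_orientability}. The only things to check are that orientability is a topological invariant and that the normalization to a standard vertex is harmless.

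First, we reduce to a polytope with a standard vertex at the origin. Choosing an integral lattice basis so that $\Delta$ has a standard vertex, and then translating that vertex to the origin, amounts to acting on $\ft^*$ by an element of $\AGL(n,\ZZ)$. As explained at the start of \cref{sec:kaleidoscope_model}, this replaces $(M,\omega,\mu)$ by a weakly isomorphic toric manifold: we add a constant to $\mu$ and change the splitting $T\cong\TT^n$. The real locus is the fixed set of the same involution, so it is literally unchanged. By \cref{rmk:toric_lagrangian} it also does not depend on the choice of $\tau$. Translations do not alter primitive outward normals. The $\GL(n,\ZZ)$ part is exactly the chosen basis, and the statement is phrased in that basis, so the normals $u_j$ in the theorem are the ones used to build the $\Delta$-kaleidoscope.

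Second, \cref{coroll:kaleidoscope_vs_real_locus} gives a homeomorphism $h\colon K\to M^\tau$. Orientability of a closed topological manifold is a homotopy/homeomorphism invariant, for instance via the existence of a fundamental class in $H_n(\,\cdot\,;\ZZ)$ or via the orientation double cover. Hence $M^\tau$ is orientable if and only if $K$ is. \cref{rmk:pl_str} and the corollary show $K$ is a genuine closed manifold. For a pseudomanifold with a regular CW structure, the combinatorial notion of coherent orientation of $n$-cells used in \cref{prop:kaleidoscope_orientability} agrees with the existence of a fundamental class: the cellular $n$-chain $\sum \pm[\Delta_\sigma]$ is a cycle exactly when the orientations are coherent.

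Finally, applying \cref{prop:kaleidoscope_orientability}, $K$ is orientable if and only if every primitive normal to a facet of $\Delta$ has an odd number of odd entries. The facets in the coordinate hyperplanes have normals $-e_i$, which satisfy the condition automatically, so the condition really only concerns the remaining facets.

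The main obstacle is the identification of the combinatorial orientability of $K$ with topological orientability of $M^\tau$, since the homeomorphism $h$ is not smooth. I would handle it homologically. The cellular boundary of $\sum_\sigma \varepsilon_\sigma[\Delta_\sigma]$ with $\varepsilon_\sigma=\pm1$ vanishes precisely under the coherence condition in the proof of \cref{prop:kaleidoscope_orientability}. So $H_n(K;\ZZ)\cong\ZZ$ exactly in the orientable case and $0$ otherwise, and this homology group is a homeomorphism invariant detecting orientability of closed connected manifolds.
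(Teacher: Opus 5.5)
Your proposal is correct and takes the same route as the paper, which deduces the theorem directly by combining the homeomorphism $K \cong M^\tau$ of \cref{coroll:kaleidoscope_vs_real_locus} with the combinatorial criterion of \cref{prop:kaleidoscope_orientability}. Your homological check, that coherent orientability of the regular cell structure on $K$ is equivalent to $H_n(K;\ZZ)\cong\ZZ$, spells out what the paper leaves to the pseudomanifold footnote in that proposition.
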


Thanks to the assumption of a standard vertex, the orientability
check becomes as simple as counting odd entries in the primitive normal
vectors that are not parallel to the coordinate axes.

As a reality check, we rediscover that $\RR\PP^n$, i.e., the real
locus of $\CC \PP^n$ whose moment polytope is a standard simplex,
is orientable exactly when $n$ is odd, since that is when
the $n$-dimensional vector $(1,1,\ldots,1)$ has an odd number of odd entries.
The case $n=3$ is illustrated on the left-side of \cref{fig:kaleidoscope_cp3}.


The orientability criterion in the next corollary
follows directly from \Cref{prop:kaleidoscope_blowup}
and the orientability of $\RR\PP^n$.
Alternatively, it can be understood from the description of
the kaleidoscope after blow-up.

\begin{corollary}
\label{coroll:orientability_blowup}
When $n$ is even, the toric real locus of a blow-up at a fixed point
of any $2n$-dimensional toric symplectic manifold is never orientable.
When $n$ is odd, a blow-up at a fixed point
does not change the orientability of the toric real locus.
\end{corollary}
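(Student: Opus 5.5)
The plan is to combine \cref{prop:kaleidoscope_blowup} with standard facts about connected sums, and then to cross-check the result against \cref{thm:orientability} applied to the blown-up polytope. By \cref{prop:kaleidoscope_blowup}, the toric real locus $\widetilde M^{\widetilde\tau}$ of the blow-up is homeomorphic to $M^\tau \# \RR\PP^n$. Orientability is a topological invariant, since it can be detected by $H_n(\,\cdot\,;\ZZ)$ of a closed connected $n$-manifold, so it suffices to decide when $M^\tau \# \RR\PP^n$ is orientable. The basic fact I would invoke is that a connected sum $X \# Y$ of closed connected $n$-manifolds ($n\ge 2$) is orientable if and only if both $X$ and $Y$ are orientable.

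I would justify this fact via orientation double covers. Removing an open ball from a connected $n$-manifold with $n \geq 2$ does not change orientability. Moreover, $X\#Y$ is the union of $X\setminus B$ and $Y \setminus B$ glued along a sphere $S^{n-1}$, which is connected and simply connected when $n \ge 3$. If both pieces are orientable, their orientations can be matched along the sphere after possibly reversing one of them. Conversely, an orientation of $X \# Y$ restricts to orientations of both punctured pieces. The case $n=2$ is the classification of surfaces, or the same argument, since matching orientations along a circle is still possible. The case $n=1$ is vacuous: $n$ is odd, and the only toric real locus is a circle, with the blow-up again giving a circle.

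The key steps are then as follows. When $n$ is even, $\RR\PP^n$ is nonorientable (for instance by \cref{thm:orientability}, because $(1,\ldots,1)$ has an even number of odd entries), so $M^\tau\#\RR\PP^n$ is nonorientable regardless of $M^\tau$. When $n$ is odd, $\RR\PP^n$ is orientable, so $M^\tau \# \RR\PP^n$ is orientable if and only if $M^\tau$ is.

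As an independent sanity check, I would also read the statement off \cref{thm:orientability}. Assume $\Delta$ has a standard vertex at the origin and blow up at a different vertex, so that the blown-up polytope still has a standard vertex at the origin; this is possible since $\Delta$ has at least two vertices when $n \geq 1$. If the normals of the facets meeting at the chopped vertex are $u_{i_1},\ldots,u_{i_n}$, the new normal is $u_{i_1}+\cdots+u_{i_n}$. When all $[u_{i_k}]$ have length $1$ mod $2$, their sum has mod-$2$ length congruent to $n$, by unimodularity the $[u_{i_k}]$ form a basis mod 2. For $n$ even this length is even, which forces nonorientability whatever $\Delta$ was. For $n$ odd and $\Delta$ orientable the new normal passes the test; for $n$ odd and $\Delta$ nonorientable, the offending old normal survives, since facets are never removed by a blow-up.

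The main subtlety is the connected-sum orientability fact in low dimensions and the irrelevance of the gluing orientation, which the footnote to \cref{prop:kaleidoscope_blowup} already addresses. I do not expect any serious obstacle.
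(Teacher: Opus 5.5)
Your proposal is correct and follows the paper's route: the paper deduces the corollary directly from \cref{prop:kaleidoscope_blowup} together with the fact that $\RR\PP^n$ is orientable exactly when $n$ is odd, and your cross-check via \cref{thm:orientability} corresponds to the paper's remark that the result can also be read from the kaleidoscope after blow-up. In that cross-check, you should choose the standard vertex away from the given fixed point rather than choosing where to blow up, and note that for $n$ even with $\Delta$ nonorientable it is the surviving old facet, not the new normal, that obstructs orientability.
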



\begin{remark}[Relevance of Orientability]
The orientability of the toric real locus is relevant, for instance, in
K\"ahler geometry, where the restriction of the riemannian metric
to the toric real locus may yield interesting global invariants of the
original toric manifold.
When the \emph{Leitmotif} ``how much does the spectrum of (the Laplacian of) a metric
determine the manifold'' is stretched to the spectrum of the metric restricted
to a toric real locus, the lack of orientability may
require an orientable double cover to be considered in order to recover
certain features of Hodge theory.
Moreover, the conformal class of the induced metric on the toric real locus
is an interesting invariant of the original K\"ahler metric on the toric symplectic manifold.

When $n=2$, the conformal structure produces a Riemann surface
structure on the oriented cover of the toric real locus.
In this context, there was some confusion in the literature regarding when
the toric real locus is actually orientable: for $n=2$,
the torus occurs as the toric real locus not only when the moment polytope is
a rectangle, as claimed in \cite[\S 2.2.4]{Donaldson},
but also when the moment polytope is another even Hirzebruch surface,
as clarified in \Cref{coroll:4dim}.
\end{remark}

\begin{example}
Within smooth Fano polytopes with $n=2$ (i.e., polytopes corresponding
to smooth toric Fano varieties of complex dimension 2, a.k.a.\ smooth toric del Pezzo surfaces), we find only one
orientable kaleidoscope, namely that of the polytope
corresponding to the manifold $\CC\PP^1 \times \CC\PP^1$,
whose toric real locus is a torus.
This is illustrated in \cref{fig:fano2}, where the orientable case
is the second from left (square polytope).
The general case of 4-dimensional toric symplectic manifolds is
tackled in \cref{sec:case_n=2}. 
\end{example}

\begin{figure}[ht]
\begin{tikzpicture}[scale=0.42]
  \tikzstyle{every path}=[thick]
  \tikzstyle{pt}=[circle, fill=black, inner sep=2pt]

  \newcommand{\drawdots}[1]{
    \foreach \p in {#1} \node[pt,scale=0.7] at \p {};
  }

  \begin{scope}[xshift=0cm]
    \draw (0,0) -- (3,0) -- (0,3) -- cycle;
    \drawdots{(0,0),(1,0),(2,0),(3,0),(0,1),(0,2),(0,3),(1,2),(2,1)}
    \drawdots{(1,1)}
  \end{scope}

  \begin{scope}[xshift=0cm,yshift=-4cm]
\begin{scope}[very thin]
\draw[opacity=.5] (-3,0) -- (3,0);
\draw[opacity=.5] (0,-3) -- (0,3);
\end{scope}
\filldraw[fill=blue!20,fill opacity=0.25] (3,0) -- (0,3) -- (-3,0) -- (0,-3) -- cycle;
\begin{scope}[very thick,decoration={markings,
mark=at position 0.5 with {\arrow{>}}}]
\draw[postaction={decorate}] (0,3)--(3,0);
\draw[postaction={decorate}] (0,-3)--(-3,0);
\end{scope}
\begin{scope}[very thick,decoration={markings,
mark=at position 0.45 with {\arrow{<}},
mark=at position 0.55 with {\arrow{<}}}]
\draw[postaction={decorate}] (0,3)--(-3,0);
\draw[postaction={decorate}] (0,-3)--(3,0);
\end{scope}
  \end{scope}

  \begin{scope}[xshift=7cm]
    \draw (0,0) -- (2,0) -- (2,2) -- (0,2) -- cycle;
    \drawdots{(0,0),(1,0),(2,0),(2,1),(2,2),(1,2),(0,2),(0,1)}
    \drawdots{(1,1)}
  \end{scope}

  \begin{scope}[xshift=7cm,yshift=-4cm]
\begin{scope}[very thin]
    \draw[opacity=.5] (0,0) -- (2,0) -- (2,2) -- (0,2) -- cycle;
    \draw[opacity=.5] (0,0) -- (-2,0) -- (-2,2) -- (0,2) -- cycle;
    \draw[opacity=.5] (0,0) -- (2,0) -- (2,-2) -- (0,-2) -- cycle;
    \draw[opacity=.5] (0,0) -- (-2,0) -- (-2,-2) -- (0,-2) -- cycle;
\end{scope}
\filldraw[fill=blue!20,fill opacity=0.25] (2,2) -- (2,-2) -- (-2,-2) -- (-2,2) -- cycle;
\begin{scope}[very thick,decoration={markings,
mark=at position 0.55 with {\arrow{>}}}]
\draw[postaction={decorate}] (0,2)--(2,2);
\draw[postaction={decorate}] (0,-2)--(2,-2);
\end{scope}
\begin{scope}[very thick,decoration={markings,
mark=at position 0.5 with {\arrow{>}},
mark=at position 0.6 with {\arrow{>}}}]
\draw[postaction={decorate}] (-2,2)--(0,2);
\draw[postaction={decorate}] (-2,-2)--(0,-2);
\end{scope}
\begin{scope}[thick,decoration={markings,
mark=at position 0.6 with {\arrow{Triangle[angle=60:2mm]}}}]
\draw[postaction={decorate}] (2,0) -- (2,2);
\draw[postaction={decorate}] (-2,0) -- (-2,2);
\end{scope}
\begin{scope}[thick,decoration={markings,
mark=at position 0.55 with {\arrow{Triangle[angle=60:2mm]}},
mark=at position 0.7 with {\arrow{Triangle[angle=60:2mm]}}}]
\draw[postaction={decorate}] (2,-2) -- (2,0);
\draw[postaction={decorate}] (-2,-2) -- (-2,0);
\end{scope}
  \end{scope}

  \begin{scope}[xshift=14cm]
    \draw (0,0) -- (3,0) -- (1,2) -- (0,2) -- cycle;
    \drawdots{(0,0),(1,0),(2,0),(3,0),(0,1),(0,2),(1,2),(2,1),(1,1)}
    \drawdots{(1,1)}
  \end{scope}

  \begin{scope}[xshift=14cm,yshift=-4cm]
\begin{scope}[very thin]
\draw[opacity=.5] (-3,0) -- (3,0);
\draw[opacity=.5] (0,-2) -- (0,2);
\end{scope}
\filldraw[fill=blue!20,fill opacity=0.25] (-3,0) -- (-1,2) -- (1,2) -- (3,0) -- (1,-2) -- (-1,-2) -- cycle;
\begin{scope}[very thick,decoration={markings,
mark=at position 0.7 with {\arrow{>}}}]
\draw[postaction={decorate}] (0,2)--(1,2);
\draw[postaction={decorate}] (0,-2)--(1,-2);
\end{scope}
\begin{scope}[very thick,decoration={markings,
mark=at position 0.5 with {\arrow{>}},
mark=at position 0.8 with {\arrow{>}}}]
\draw[postaction={decorate}] (-1,2)--(0,2);
\draw[postaction={decorate}] (-1,-2)--(0,-2);
\end{scope}
\begin{scope}[thick,decoration={markings,
mark=at position 0.6 with {\arrow{Triangle[angle=60:2mm]}}}]
\draw[postaction={decorate}] (3,0) -- (1,2);
\draw[postaction={decorate}] (-3,0) -- (-1,-2);
\end{scope}
\begin{scope}[thick,decoration={markings,
mark=at position 0.55 with {\arrow{Triangle[angle=60:2mm]}},
mark=at position 0.7 with {\arrow{Triangle[angle=60:2mm]}}}]
\draw[postaction={decorate}] (-1,2) -- (-3,0);
\draw[postaction={decorate}] (1,-2) -- (3,0);
\end{scope}
  \end{scope}

  \begin{scope}[xshift=21.5cm]
    \draw (0,0) -- (2,0) -- (2,1) -- (1,2) -- (0,2) -- cycle;
    \drawdots{(0,0),(1,0),(2,0),(2,1),(1,2),(0,2),(0,1),(1,1)}
    \drawdots{(1,1)}
  \end{scope}

  \begin{scope}[xshift=21.5cm,yshift=-4cm,scale=1.2]
\begin{scope}[very thin]
\draw[opacity=.5] (-2,0) -- (2,0);
\draw[opacity=.5] (0,-2) -- (0,2);
\end{scope}
\filldraw[fill=blue!20,fill opacity=0.25] (-2,0) -- (-2,1) -- (-1,2) -- (1,2) -- (2,1) -- (2,-1) -- (1,-2) -- (-1,-2) -- (-2,-1) -- cycle;
\begin{scope}[very thick,decoration={markings,
mark=at position 0.7 with {\arrow{>}}}]
\draw[postaction={decorate}] (0,2)--(1,2);
\draw[postaction={decorate}] (0,-2)--(1,-2);
\end{scope}
\begin{scope}[very thick,decoration={markings,
mark=at position 0.5 with {\arrow{>}},
mark=at position 0.8 with {\arrow{>}}}]
\draw[postaction={decorate}] (-1,2)--(0,2);
\draw[postaction={decorate}] (-1,-2)--(0,-2);
\end{scope}
\begin{scope}[very thick,decoration={markings,
mark=at position 0.9 with {\arrow{Stealth[length=3.5mm, open]}}}]
\draw[postaction={decorate}] (2,0)--(2,1);
\draw[postaction={decorate}] (-2,0)--(-2,1);
\end{scope}
\begin{scope}[very thick,decoration={markings,
mark=at position 0.7 with {\arrow{Stealth[length=2.5mm, open]}},
mark=at position 0.9 with {\arrow{Stealth[length=2.5mm, open]}}}]
\draw[postaction={decorate}] (2,-1)--(2,0);
\draw[postaction={decorate}] (-2,-1)--(-2,0);
\end{scope}
\begin{scope}[thick,decoration={markings,
mark=at position 0.7 with {\arrow{Triangle[angle=60:2mm]}}}]
\draw[postaction={decorate}] (2,1)--(1,2);
\draw[postaction={decorate}] (-2,-1)--(-1,-2);
\end{scope}
\begin{scope}[thick,decoration={markings,
mark=at position 0.55 with {\arrow{Triangle[angle=60:2mm]}},
mark=at position 0.7 with {\arrow{Triangle[angle=60:2mm]}}}]
\draw[postaction={decorate}] (1,-2)--(2,-1);
\draw[postaction={decorate}] (-1,2)--(-2,1);
\end{scope}
  \end{scope}

  \begin{scope}[xshift=28.5cm]
    \draw (0,0) -- (0,1) -- (1,2) -- (2,2) -- (2,1) -- (1,0) -- cycle;
    \drawdots{(0,0),(0,1),(1,2),(2,2),(2,1),(1,0)}
    \drawdots{(1,1)}
  \end{scope}

  \begin{scope}[xshift=28.5cm,yshift=-4cm,scale=1.2]
\begin{scope}[very thin]
\draw[opacity=.5] (-1,0) -- (1,0);
\draw[opacity=.5] (0,-1) -- (0,1);
\end{scope}
\filldraw[fill=blue!20,fill opacity=0.25] (-1,0) -- (-2,1) -- (-2,2) -- (-1,2) -- (0,1) -- (1,2) -- (2,2) -- (2,1) -- (1,0) -- (2,-1) -- (2,-2) -- (1,-2) -- (0,-1) -- (-1,-2) -- (-2,-2) -- (-2,-1) -- cycle;
\begin{scope}[very thick,decoration={markings,
mark=at position 0.7 with {\arrow{>}}}]
\draw[postaction={decorate}] (1,2)--(2,2);
\draw[postaction={decorate}] (1,-2)--(2,-2);
\end{scope}
\begin{scope}[very thick,decoration={markings,
mark=at position 0.5 with {\arrow{>}},
mark=at position 0.8 with {\arrow{>}}}]
\draw[postaction={decorate}] (-2,2)--(-1,2);
\draw[postaction={decorate}] (-2,-2)--(-1,-2);
\end{scope}
\begin{scope}[very thick,decoration={markings,
mark=at position 0.5 with {\arrow{>}},
mark=at position 0.65 with {\arrow{>}},
mark=at position 0.8 with {\arrow{>}}}]
\draw[postaction={decorate}] (0,1)--(1,2);
\draw[postaction={decorate}] (0,-1)--(-1,-2);
\end{scope}
\begin{scope}[thick,decoration={markings,
mark=at position 0.6 with {\arrow{Triangle[angle=60:2mm]}}}]
\draw[postaction={decorate}] (2,1) -- (2,2);
\draw[postaction={decorate}] (-2,1) -- (-2,2);
\end{scope}
\begin{scope}[thick,decoration={markings,
mark=at position 0.55 with {\arrow{Triangle[angle=60:2mm]}},
mark=at position 0.9 with {\arrow{Triangle[angle=60:2mm]}}}]
\draw[postaction={decorate}] (2,-2) -- (2,-1);
\draw[postaction={decorate}] (-2,-2) -- (-2,-1);
\end{scope}
\begin{scope}[thick,decoration={markings,
mark=at position 0.5 with {\arrow{Triangle[angle=60:2mm]}},
mark=at position 0.65 with {\arrow{Triangle[angle=60:2mm]}},
mark=at position 0.8 with {\arrow{Triangle[angle=60:2mm]}}}]
\draw[postaction={decorate}] (1,0) -- (2,1);
\draw[postaction={decorate}] (-1,0) -- (-2,-1);
\end{scope}
\begin{scope}[very thick,decoration={markings,
mark=at position 0.8 with {\arrow{Stealth[length=3.5mm, open]}}}]
\draw[postaction={decorate}] (2,-1)--(1,0);
\draw[postaction={decorate}] (-2,1)--(-1,0);
\end{scope}
\begin{scope}[very thick,decoration={markings,
mark=at position 0.6 with {\arrow{Stealth[length=2.5mm, open]}},
mark=at position 0.9 with {\arrow{Stealth[length=2.5mm, open]}}}]
\draw[postaction={decorate}] (-1,2)--(0,1);
\draw[postaction={decorate}] (1,-2)--(0,-1);
\end{scope}
  \end{scope}

\end{tikzpicture}
\caption{The moment polytopes for the smooth toric Fano varieties with $n=2$,
and the corresponding kaleidoscopes.  The columns from left to right represent:
$\CC\PP^2$ with toric real locus $\RR \PP^2$,
$\CC\PP^1 \times \CC\PP^1$ with toric real locus the torus $S^1 \times S^1$,
$\CC\PP^2 \# \overline{\CC\PP^2}$
with toric real locus the Klein bottle $\RR \PP^2 \# \RR \PP^2$,
$\CC\PP^2 \#_2 \overline{\CC\PP^2}$
with toric real locus $\#_3 \RR \PP^2$, and
$\CC\PP^2 \#_3 \overline{\CC\PP^2}$ with toric real locus $\#_4 \RR \PP^2$.}
\label{fig:fano2}
\end{figure}

We easily find examples among higher-dimensional smooth Fano polytopes 
thanks to the data obtained by \O bro \cite{Obro2007} and hosted by the Graded Ring Database \cite{GradedRingDatabase,ObroSite}. 
This database provides a list of isomorphism classes of smooth Fano
$n$-polytopes for each dimension $n \leq 6$.
However, the given polytopes on this list are the duals of our
moment polytopes \cite[Def. 2.1]{Obro2007}, so the numbers of
vertices and facets are swapped: the listed vertices give
the primitive inward-pointing normal vectors to our facets,
and the points listed under ``Dual'' are the vertices of our moment polytopes.
The representatives of the isomorphism classes in \O bro's algorithm
correspond to what he calls \emph{special embeddings}, which
imply our \emph{standard vertex} in the dual polytope
\cite[Def. 3.4 and \S 5.3]{Obro2007}.
All in all, we are left with the task of going through Øbro's list
looking for the polytopes all of whose listed vertices
(i.e.\ primitive normal vectors for us)
have an odd number of odd entries.
Already other databases handle some higher dimensions.

\begin{remark}[Proportion of Orientable Toric Real Loci in Smooth Fano Case]
The Fano examples illustrate the expectation that,
with growing dimension, it should become rare to find orientable
toric real loci, since it is enough to have one primitive normal
vector with an even number of odd entries for orientability to fail.
With the help of \emph{Sage}, we analyzed data files provided by
\O bro~\cite{ObroArchived} and Paffenholz~\cite{PaffenholzSite} in order to count, for each \(n \le 9\), 
the number of smooth toric Fano polytopes that give rise to orientable kaleidoscopes.
\cref{table:fano_orientable_real_loci} lists the percentage of smooth toric Fano
$n$-folds having orientable toric real loci for $n$ up to $9$.
\end{remark}

\begin{table}
\centering
\begin{tabular}{|c|c|c|c|}
\hline
\begin{minipage}{1cm}
\centering $n$
\end{minipage} &
\begin{minipage}{2.5cm}\begin{center}
$\phantom{x}$\\ Smooth toric\\ Fano\\ $n$-polytopes\\ $\phantom{x}$
\end{center}\end{minipage} &
\begin{minipage}{2.5cm}\begin{center}
Those giving\\ orientable\\ toric real loci
\end{center}\end{minipage} &
\begin{minipage}{2.5cm}\begin{center}
Proportion\\ of orientable\\ toric real loci
\end{center}\end{minipage}
\\
\hline
2 & 5 & 1 & 20\% \\
\hline
3 & 18 & 3 & 16.67\% \\
\hline
4 & 124 & 4 & 3.23\% \\
\hline
5 & 866 & 12 & 1.39\% \\
\hline
6 & 7622 & 28 & 0.37\% \\
\hline
7 & 72256 & 85 & 0.12\% \\
\hline
8 & 749892 & 258 & 0.034\% \\
\hline
9 & 8229721 & 896 & 0.011\% \\
\hline
\end{tabular}

\vspace{1ex}

\caption{Proportion of orientable toric real loci in smooth toric
Fano $n$-folds for $n$ up to 9.}
\label{table:fano_orientable_real_loci}
\end{table}

\begin{remark}[Proportion vs.\ Statistical Estimate of Orientable Toric Real Loci]
We compare the found proportion from \Cref{table:fano_orientable_real_loci}
with a statistical expectation, based on the following reasoning.

Let $\Delta$ be the moment polytope for a smooth toric Fano variety
of complex dimension $n$ and let $d$ be its number of facets.
Without loss of generality, we assume that $\Delta$ has a standard vertex. 
Only the primitive normal vectors to the
$d-n$ facets away from the coordinate hyperplanes need to be checked
for the orientability criterion of the toric real locus (cf.\ \Cref{thm:orientability}).
\textit{A priori}, each of those $d-n$ relevant vectors has a 50\% chance of
boycotting orientability, i.e., of having an even number of odd entries.
Hence, we estimate at $(\frac 12)^{d-n}$ the chance that the
toric real locus is orientable, simply in terms of the
Picard number\footnote{The \emph{Picard number} of the toric variety
is the rank of its \emph{Picard group}, coincides with the second Betti number
$b_2$ of the toric symplectic manifold, and is equal to $d-n$,
where $d$ is the number of facets and $n$ the dimension of the
torus~\cite[p.331]{Delzant88}.}
of the variety, namely $d-n$, without looking at specific normal vectors.

For instance, when $n=2$, knowing that there is
one polygon with $d=3$, two with $d=4$, one with $d=5$, and one with $d=6$
(cf.\ \cref{fig:fano2}), we find the expected proportion
of orientable toric real loci
for smooth toric Fano varieties in this dimension as
\[
   \frac{\left( \tfrac 12 \right)^{1}
   + 2 \cdot \left( \tfrac 12 \right)^{2}
   + \left( \tfrac 12 \right)^{3}
   + \left( \tfrac 12 \right)^{4}}{5}
   = \frac{19}{80} = 0.2375.
\]

Similarly, when $n=3$, the expected proportion is
\[
   \frac{\left( \tfrac 12 \right)^{1}
   + 4 \cdot \left( \tfrac 12 \right)^{2}
   + 7 \cdot \left( \tfrac 12 \right)^{3}
   + 4 \cdot \left( \tfrac 12 \right)^{4}
   + 2 \cdot \left( \tfrac 12 \right)^{5}}{18}
   = \frac{43}{288} = 0.14930556.
\]

\Cref{table:fano_orientable_statistics} compares, for $n$ up to $9$,
the actual percentage $\mathcal{P}_n$ 
of smooth toric Fano $n$-folds having orientable toric real loci 
with the expected estimate $\mathcal{E}_n$ by the above procedure.
It is notable how the actual proportion deviates from this estimate,
possibly because the unimodularity condition on the normal vectors
makes them more prone to violate the condition for orientability of the toric real locus.
\end{remark}

\begin{table}
\centering
\begin{tabular}{|c|c|c|c|}
\hline
\begin{minipage}{1cm}
\centering $n$
\end{minipage} &
\begin{minipage}{2.7cm}\begin{center}
$\phantom{x}$\\ Actual\\ proportion, $\mathcal{P}_n$,\\ of orientable\\ toric real loci\\ $\phantom{x}$ 
\end{center}\end{minipage} &
\begin{minipage}{2.7cm}\begin{center}
Estimated\\ proportion, $\mathcal{E}_n$,\\ based on\\ Picard nos.
\end{center}\end{minipage} &
\begin{minipage}{2.7cm}\begin{center}
$\phantom{x}$\\ Relative\\ error in this\\ estimation,\\
$|\mathcal{E}_n-\mathcal{P}_n|/\mathcal{P}_n$\\ $\phantom{x}$ 
\end{center}\end{minipage}
\\
\hline
2 & 20.0000\% & 23.7500\% & 18.7500\% \\
\hline
3 & 16.6667\% & 14.9306\% & 10.4167\% \\
\hline
4 & 3.22581\% & 8.22518\% & 154.980\% \\
\hline
5 & 1.38568\% & 5.14579\% & 271.354\% \\
\hline
6 & 0.367358\% & 3.28646\% & 794.621\%\\
\hline
7 & 0.117637\% & 2.18751\% & 1759.54\%\\
\hline
8 & 0.0344050\% & 1.49221\% & 4237.19\%\\
\hline
9 & 0.0108874\% & 1.03563\% & 9412.21\%\\
\hline
\end{tabular}

\vspace{1ex}

\caption{Proportion of orientable toric real loci in smooth toric
Fano $n$-folds for $n$ up to 9 vs.\ estimate of proportion
by considering only the Picard numbers.}
\label{table:fano_orientable_statistics}
\end{table}

Using earlier classifications of low-dimensional smooth toric Fano varieties, which explicitly describe the variety associated to each polytope, we can be more concrete for $n=3$ (\Cref{exs_fano_3}) and $n=4$ (\Cref{exs_fano_4}).


\begin{example}
\label{exs_fano_3}
Smooth toric Fano threefolds were originally classified independently by
Batyrev \cite{Batyrev81} and by Watanabe and Watanabe \cite{WatanabeWatanabe}.
Within a total of eighteen examples, we find only three
orientable kaleidoscopes, namely those of the polytopes
corresponding to the manifolds
\begin{itemize}
    \item $\CC\PP^3$,
    \item $\CC\PP^3 \# \overline{\CC\PP^3}$ and
    \item $\CC\PP^1 \times \CC\PP^1 \times \CC\PP^1$.
\end{itemize}
These are the manifolds with ID numbers
23, 20, and 21 in Øbro's database, 
and numbers 1, 3, and 9 in Batyrev's list \cite{Batyrev81}.
Their respective (orientable) toric real loci are
\begin{itemize}
    \item $\RR\PP^3$,
    \item $\RR\PP^3 \# \RR\PP^3$ and
    \item $S^1 \times S^1 \times S^1$.
\end{itemize}    
The corresponding moment polytopes and kaleidoscopes appear in \cref{fig:kaleidoscope_cp3,fig:kaleidoscope_cp3_blownup_at_point,fig:kaleidoscope_cp1_cp1_cp1}.

The real loci of smooth toric Fano threefolds were also studied by Delaunay~\cite{DelaunayPhD}, although she worked in a more general setting than ours.
\end{example}


\begin{figure}
\begin{tikzpicture}[tdplot_main_coords, scale=1.8]

\begin{scope}[xshift=0cm]

  \coordinate (O) at (0,0,0);
  \coordinate (A) at (1,0,0); 
  \coordinate (B) at (0,1,0); 
  \coordinate (C) at (0,0,1); 

\fill (0,0,0) circle (1pt);
\fill[color=red] (1,0,0) circle (1pt);
\fill[color=blue] (0,1,0) circle (1pt);
\fill[color=green] (0,0,1) circle (1pt);

\draw[-stealth,opacity=.5] (O) -- (2.5,0,0);
\draw[-stealth,opacity=.5] (O) -- (0,1.7,0);
\draw[-stealth,opacity=.5] (O) -- (0,0,1.7);

  \draw[thick] (O) -- (A);
  \draw[thick] (O) -- (B);
  \draw[thick] (O) -- (C);
  \draw[thick] (A) -- (B);
  \draw[thick] (B) -- (C);
  \draw[thick] (C) -- (A);

  \filldraw[fill=blue!20, opacity=0.5] (A) -- (B) -- (C) -- cycle;

\draw[thick, -stealth] (0.33,0.33,0.33) -- (1.33,1.33,1.33) node[scale=0.7,right] {$u=\begin{pmatrix}1\\1\\1 \end{pmatrix}$};

\end{scope}

\begin{scope}[xshift=4cm]

  \coordinate (O) at (0,0,0);
  \coordinate (A) at (1,0,0); 
  \coordinate (B) at (0,1,0); 
  \coordinate (C) at (0,0,1); 
  \coordinate (-A) at (-1,0,0); 
  \coordinate (-B) at (0,-1,0); 
  \coordinate (-C) at (0,0,-1); 

\draw[-stealth,opacity=.5] (-2.2,0,0) -- (1,0,0);
\draw[-stealth,opacity=.5] (0,-1.7,0) -- (0,1.7,0);
\draw[-stealth,opacity=.5] (0,0,-1.6) -- (0,0,1.7);

  \draw[thick] (A) -- (B);
  \draw[thick] (B) -- (C);
  \draw[thick] (C) -- (A);

  \draw[dashed] (-A) -- (-B);
  \draw[dashed] (-A) -- (-C);
  \draw[dashed] (-A) -- (B);
  \draw[dashed] (-A) -- (C);

\fill[color=red] (-1,0,0) circle (1pt);
\fill[color=blue] (0,1,0) circle (1pt);
\fill[color=blue] (0,-1,0) circle (1pt);
\fill[color=green] (0,0,1) circle (1pt);
\fill[color=green] (0,0,-1) circle (1pt);

\draw[thick, -{triangle 60}] (-0.233,-0.233,-0.533) -- (-1.033,-1.033,-1.333);
\begin{scope}[thick, decoration={markings,
mark=at position 0.8 with {\arrow{triangle 60}},
mark=at position 1 with {\arrow{triangle 60}}}]
\draw[postaction={decorate}] (0.233,0.233,-0.533) -- (0.633,0.633,-0.933);
\end{scope}
\draw[thick, dashed, ->] (0.233,-0.233,0.533) -- (0.633,-0.633,0.933);
\begin{scope}[thick, dashed, decoration={markings,
mark=at position 0.85 with {\arrow{>}},
mark=at position 1 with {\arrow{>}}}]
\draw[postaction={decorate}] (-0.433,0.433,0.133) -- (-0.733,0.733,0.433);
\end{scope}

  \filldraw[fill=blue!10, opacity=0.7] (A) -- (-B) -- (C) -- cycle;
  \filldraw[fill=blue!10, opacity=0.7] (A) -- (B) -- (-C) -- cycle;
  \filldraw[fill=blue!10, opacity=0.7] (A) -- (-B) -- (-C) -- cycle;

  \filldraw[fill=blue!20, opacity=0.8] (A) -- (B) -- (C) -- cycle;

\fill[color=red] (1,0,0) circle (1pt);
\draw[-stealth,opacity=.5] (1,0,0) -- (2.5,0,0);

\draw[very thick, -{triangle 60}] (0.233,0.233,0.533) -- (1.033,1.033,1.333);
\begin{scope}[very thick, decoration={markings,
mark=at position 0.8 with {\arrow{triangle 60}},
mark=at position 1 with {\arrow{triangle 60}}}]
\draw[postaction={decorate}] (-0.233,-0.233,0.533) -- (-0.633,-0.633,0.933);
\end{scope}
\draw[very thick, dashed, ->] (-0.233,0.233,-0.533) -- (-0.633,0.633,-0.933);
\begin{scope}[very thick, dashed, decoration={markings,
mark=at position 0.85 with {\arrow{>}},
mark=at position 1 with {\arrow{>}}}]
\draw[postaction={decorate}] (0.433,-0.433,-0.133) -- (0.733,-0.733,-0.433);
\end{scope}

\end{scope}

\end{tikzpicture}
\caption{The moment polytope (a tetrahedron) and its kaleidoscope
for a standard toric $\CC\PP^3$, where the facets
are pairwise identified according to the depicted four arrow types.}
\label{fig:kaleidoscope_cp3}
\end{figure}


\begin{figure}[ht]
\begin{tikzpicture}[tdplot_main_coords, scale=0.8]

\begin{scope}[xshift=0cm]

  \coordinate (O) at (0,0,0);
  \coordinate (A) at (3,0,0); 
  \coordinate (B) at (0,3,0); 
  \coordinate (C) at (0,0,2); 
  \coordinate (D) at (1,0,2); 
  \coordinate (E) at (0,1,2); 

\draw[-stealth,opacity=.5] (O) -- (5,0,0);
\draw[-stealth,opacity=.5] (O) -- (0,4,0);
\draw[-stealth,opacity=.5] (O) -- (0,0,3);

  \draw[thick] (O) -- (A);
  \draw[thick] (O) -- (B);
  \draw[thick] (O) -- (C);
  \draw[thick] (A) -- (B);
  \draw[thick] (B) -- (E);
  \draw[thick] (D) -- (A);
  \draw[thick] (C) -- (E);
  \draw[thick] (C) -- (D);
  \draw[thick] (D) -- (E);

  \filldraw[fill=blue!20, opacity=0.5] (A) -- (B) -- (E) -- (D) -- cycle;
  \filldraw[fill=blue!20, opacity=0.5] (C) -- (D) -- (E) -- cycle;

\fill (O) circle (2pt);
\fill[color=red] (A) circle (2pt);
\fill[color=blue] (B) circle (2pt);
\fill (C) circle (2pt);
\fill[color=green] (D) circle (2pt);
\fill (E) circle (2pt);

\end{scope}

\begin{scope}[xshift=10cm]

  \coordinate (O) at (0,0,0);
  \coordinate (A) at (3,0,0); 
  \coordinate (B) at (0,3,0); 
  \coordinate (C) at (0,0,2); 
  \coordinate (D) at (1,0,2); 
  \coordinate (E) at (0,1,2); 

  \coordinate (-A) at (-3,0,0); 
  \coordinate (-B) at (0,-3,0); 
  \coordinate (-C) at (0,0,-2); 
  \coordinate (-D) at (-1,0,-2); %
  \coordinate (-E) at (0,-1,-2); %

  \coordinate (F) at (1,0,-2); %
  \coordinate (G) at (0,1,-2); %
  \coordinate (-F) at (-1,0,2); %
  \coordinate (-G) at (0,-1,2); %
  

  \draw[thick] (A) -- (D) -- (E) -- (B) -- cycle;
  \draw[thick] (A) -- (F) -- (G) -- (B) -- cycle;
  \draw[thick] (C) -- (D) -- (E) -- cycle;
  \draw[thick] (C) -- (D) -- (-G) -- cycle;
  \draw[thick] (C) -- (-F) -- (-G) -- cycle;
  \draw[thick] (C) -- (-F) -- (E) -- cycle;

  \draw[dashed] (B) -- (-A) -- (-B);
  \draw[dashed] (-E) -- (-D) -- (G);
  \draw[dashed] (-F) -- (-A) -- (-D);
  \draw[dashed] (-A) -- (A);
  \draw[dashed] (-B) -- (B);
  \draw[dashed] (-C) -- (C);
  \draw[dashed] (-D) -- (F);
  \draw[dashed] (-E) -- (G);

\draw[thick, -{triangle 60}] (-1,-1,-1) -- (-3,-3,-3);
\begin{scope}[thick, decoration={markings,
mark=at position 0.8 with {\arrow{triangle 60}},
mark=at position 1 with {\arrow{triangle 60}}}]
\draw[postaction={decorate}] (-1,1,-1) -- (-2,2,-2);
\end{scope}
\draw[thick, dashed, ->] (-1,-1,1) -- (-2,-2,2);
\begin{scope}[thick, dashed, decoration={markings,
mark=at position 0.85 with {\arrow{>}},
mark=at position 1 with {\arrow{>}}}]
\draw[postaction={decorate}] (-1.25,1.25,0.5) -- (-2.25,2.25,1.5);
\end{scope}

\fill (O) circle (2pt);
\fill[color=red] (-A) circle (2pt);
\fill (-C) circle (2pt);
\fill (-E) circle (2pt);
\fill (-F) circle (2pt);
\fill (-G) circle (2pt);

  \filldraw[fill=blue!10, opacity=0.7] (A) -- (-B) -- (-E) -- (F) -- cycle;
  \filldraw[fill=blue!10, opacity=0.7] (A) -- (B) -- (G) -- (F) -- cycle;
  \filldraw[fill=blue!10, opacity=0.7] (A) -- (-B) -- (-G) -- (D) -- cycle;
  \filldraw[fill=blue!10, opacity=0.7] (C) -- (-G) -- (D) -- cycle;
  \filldraw[fill=blue!10, opacity=0.7] (C) -- (-G) -- (-F) -- cycle;
  \filldraw[fill=blue!10, opacity=0.7] (C) -- (-F) -- (E) -- cycle;

  \filldraw[fill=blue!20, opacity=0.8] (A) -- (B) -- (E) -- (D) -- cycle;
  \filldraw[fill=blue!20, opacity=0.8] (C) -- (E) -- (D) -- cycle;

\fill[color=red] (A) circle (2pt);
\fill[color=blue] (B) circle (2pt);
\fill (C) circle (2pt);
\fill[color=green] (D) circle (2pt);
\fill (E) circle (2pt);
\fill (F) circle (2pt);
\fill (G) circle (2pt);
\fill[color=blue] (-B) circle (2pt);
\fill (-E) circle (2pt);
\fill (-F) circle (2pt);
\fill (-G) circle (2pt);

\draw[very thick, -{triangle 60}] (1,1,1) -- (3,3,3);
\begin{scope}[very thick, decoration={markings,
mark=at position 0.8 with {\arrow{triangle 60}},
mark=at position 1 with {\arrow{triangle 60}}}]
\draw[postaction={decorate}] (1,-1,1) -- (2,-2,2);
\end{scope}
\draw[very thick, dashed, ->] (1,1,-1) -- (2,2,-2);
\begin{scope}[very thick, dashed, decoration={markings,
mark=at position 0.85 with {\arrow{>}},
mark=at position 1 with {\arrow{>}}}]
\draw[postaction={decorate}] (1.25,-1.25,-0.5) -- (2.25,-2.25,-1.5);
\end{scope}

\end{scope}

\end{tikzpicture}
\caption{The moment polytope (a truncated tetrahedron) and its kaleidoscope
for a $\CC\PP^3 \# \overline{\CC\PP^3}$.
The horizontal facets of the kaleidoscope on the right are
translationally identified and the other facets are pairwise identified
according to the depicted four arrow types.}
\label{fig:kaleidoscope_cp3_blownup_at_point}
\end{figure}


\begin{figure}[ht]
\begin{tikzpicture}[tdplot_main_coords, scale=1.8]

\begin{scope}[xshift=0cm]

  \coordinate (O) at (0,0,0);
  \coordinate (A) at (1,0,0); 
  \coordinate (B) at (0,1,0); 
  \coordinate (C) at (0,0,1); 
  \coordinate (AB) at (1,1,0);
  \coordinate (BC) at (0,1,1);
  \coordinate (AC) at (1,0,1);
  \coordinate (ABC) at (1,1,1);

\fill (0,0,0) circle (1pt);
\fill (1,0,0) circle (1pt);
\fill (0,1,0) circle (1pt);
\fill (0,0,1) circle (1pt);
\fill (1,1,0) circle (1pt);
\fill (0,1,1) circle (1pt);
\fill (1,0,1) circle (1pt);
\fill (1,1,1) circle (1pt);

\draw[-stealth,opacity=.5] (O) -- (2.5,0,0);
\draw[-stealth,opacity=.5] (O) -- (0,1.7,0);
\draw[-stealth,opacity=.5] (O) -- (0,0,1.7);

  \draw[thick] (O) -- (A);
  \draw[thick] (O) -- (B);
  \draw[thick] (O) -- (C);
  \draw[thick] (A) -- (AC);
  \draw[thick] (C) -- (AC);
  \draw[thick] (A) -- (AB);
  \draw[thick] (B) -- (AB);
  \draw[thick] (B) -- (BC);
  \draw[thick] (C) -- (BC);
  \draw[thick] (ABC) -- (AB);
  \draw[thick] (ABC) -- (AC);
  \draw[thick] (ABC) -- (BC);

  \filldraw[fill=blue!20, opacity=0.5] (A) -- (AC) -- (ABC)  -- (AB) -- cycle;
  \filldraw[fill=blue!20, opacity=0.5] (B) -- (BC) -- (ABC)  -- (AB) -- cycle;
  \filldraw[fill=blue!20, opacity=0.5] (C) -- (AC) -- (ABC)  -- (BC) -- cycle;

\end{scope}

\begin{scope}[xshift=4cm]

  \coordinate (O) at (0,0,0);
  \coordinate (A) at (1,0,0); 
  \coordinate (B) at (0,1,0); 
  \coordinate (C) at (0,0,1); 
  \coordinate (AB) at (1,1,0);
  \coordinate (BC) at (0,1,1);
  \coordinate (AC) at (1,0,1);
  \coordinate (ABC) at (1,1,1);
  \coordinate (-A) at (-1,0,0); 
  \coordinate (-B) at (0,-1,0); 
  \coordinate (-C) at (0,0,-1); 
  \coordinate (-A-B) at (-1,-1,0);
  \coordinate (-B-C) at (0,-1,-1);
  \coordinate (-A-C) at (-1,0,-1);
  \coordinate (-A-B) at (-1,-1,0);
  \coordinate (-B-C) at (0,-1,-1);
  \coordinate (-A-C) at (-1,0,-1);
  \coordinate (A-B) at (1,-1,0);
  \coordinate (B-C) at (0,1,-1);
  \coordinate (A-C) at (1,0,-1);
  \coordinate (-AB) at (-1,1,0);
  \coordinate (-BC) at (0,-1,1);
  \coordinate (-AC) at (-1,0,1);
  \coordinate (-ABC) at (-1,1,1);
  \coordinate (A-BC) at (1,-1,1);
  \coordinate (AB-C) at (1,1,-1);
  \coordinate (-A-BC) at (-1,-1,1);
  \coordinate (-AB-C) at (-1,1,-1);
  \coordinate (A-B-C) at (1,-1,-1);
  \coordinate (-A-B-C) at (-1,-1,-1);


  \draw[thick] (A) -- (AC);
  \draw[thick] (A) -- (AB);
  \draw[thick] (B) -- (AB);
  \draw[thick] (B) -- (BC);
  \draw[thick] (C) -- (AC);
  \draw[thick] (C) -- (BC);
  \draw[thick] (ABC) -- (AB);
  \draw[thick] (ABC) -- (AC);
  \draw[thick] (ABC) -- (BC);

  \draw[dashed] (A-B-C) -- (-A-B-C);
  \draw[dashed] (A-B) -- (-A-B);
  \draw[dashed] (A-C) -- (-A-C);
  \draw[dashed] (A) -- (-A);
  \draw[dashed] (-A-BC) -- (-A-B-C);
  \draw[dashed] (-AC) -- (-A-C);
  \draw[dashed] (-BC) -- (-B-C);
  \draw[dashed] (C) -- (-C);
  \draw[dashed] (-AB-C) -- (-A-B-C);
  \draw[dashed] (B-C) -- (-B-C);
  \draw[dashed] (-AB) -- (-A-B);
  \draw[dashed] (B) -- (-B);

\fill (0,0,0) circle (1pt);
\fill (1,0,0) circle (1pt);
\fill (0,1,0) circle (1pt);
\fill (0,0,1) circle (1pt);
\fill (-1,0,0) circle (1pt);
\fill (0,-1,0) circle (1pt);
\fill (0,0,-1) circle (1pt);
\fill (1,1,0) circle (1pt);
\fill (0,1,1) circle (1pt);
\fill (1,0,1) circle (1pt);
\fill (1,-1,0) circle (1pt);
\fill (0,1,-1) circle (1pt);
\fill (1,0,-1) circle (1pt);
\fill (-1,1,0) circle (1pt);
\fill (0,-1,1) circle (1pt);
\fill (-1,0,1) circle (1pt);
\fill (-1,-1,0) circle (1pt);
\fill (0,-1,-1) circle (1pt);
\fill (-1,0,-1) circle (1pt);
\fill (1,1,1) circle (1pt);
\fill (-1,1,1) circle (1pt);
\fill (1,-1,1) circle (1pt);
\fill (1,1,-1) circle (1pt);
\fill (-1,-1,1) circle (1pt);
\fill (-1,1,-1) circle (1pt);
\fill (1,-1,-1) circle (1pt);
\fill (-1,-1,-1) circle (1pt);

  \filldraw[fill=blue!10, opacity=0.7] (A) -- (AC) -- (A-BC)  -- (A-B) -- cycle;
  \filldraw[fill=blue!10, opacity=0.7] (A) -- (A-C) -- (A-B-C)  -- (A-B) -- cycle;
  \filldraw[fill=blue!10, opacity=0.7] (A) -- (A-C) -- (AB-C)  -- (AB) -- cycle;
  \filldraw[fill=blue!10, opacity=0.7] (C) -- (AC) -- (A-BC)  -- (-BC) -- cycle;
  \filldraw[fill=blue!10, opacity=0.7] (C) -- (-AC) -- (-A-BC)  -- (-BC) -- cycle;
  \filldraw[fill=blue!10, opacity=0.7] (C) -- (-AC) -- (-ABC)  -- (BC) -- cycle;
  \filldraw[fill=blue!10, opacity=0.7] (B) -- (AB) -- (AB-C)  -- (B-C) -- cycle;
  \filldraw[fill=blue!10, opacity=0.7] (B) -- (-AB) -- (-AB-C)  -- (B-C) -- cycle;
  \filldraw[fill=blue!10, opacity=0.7] (B) -- (-AB) -- (-ABC)  -- (BC) -- cycle;

  \filldraw[fill=blue!20, opacity=0.8] (A) -- (AC) -- (ABC)  -- (AB) -- cycle;
  \filldraw[fill=blue!20, opacity=0.8] (B) -- (BC) -- (ABC)  -- (AB) -- cycle;
  \filldraw[fill=blue!20, opacity=0.8] (C) -- (AC) -- (ABC)  -- (BC) -- cycle;

\end{scope}

\end{tikzpicture}
\caption{On the left, the moment polytope (a cube)
for a standard toric $\CC\PP^1 \times \CC\PP^1 \times \CC\PP^1$.
On the right, its kaleidoscope, where
parallel (outer) facets of the larger cube are pairwise identified
to produce a 3-torus.}
\label{fig:kaleidoscope_cp1_cp1_cp1}
\end{figure}


\begin{example}
\label{exs_fano_4}
Smooth toric Fano fourfolds were originally classified by Batyrev \cite{Batyrev1999}, with one missing example provided by Sato \cite{Sato}.
Within a total of 124 corresponding polytopes, 
we find only four orientable kaleidoscopes, namely those of the polytopes
corresponding to the manifolds
\begin{itemize}
    \item $\CC\PP^3 \times \CC\PP^1$,
    \item $\left( \CC\PP^1 \right)^4$,
    \item $\left( \CC\PP^3 \# \overline{\CC\PP^3} \right) \times \CC\PP^1$ and
    \item $\PP_{_{\CC\PP^3}} (\cO \oplus \cO (2))$
    (a $\CC\PP^1$-bundle over $\CC\PP^3$).
    
\end{itemize}
with respective (orientable) toric real loci
\begin{itemize}
    \item $\RR\PP^3 \times S^1$,
    \item $\left( S^1 \right)^4$,
    \item $\left( \RR\PP^3 \# \RR\PP^3 \right) \times S^1$, and
    \item again $\RR\PP^3 \times S^1$\footnote{The compatibility
    of complex conjugation with the toric fibration gives that
    this toric real locus is a \(S^1\)-bundle over \(\RP{3}\).
    Since both the base and the total space of this bundle are orientable, it follows that it is orientable \emph{as a bundle}. Orientable \(S^1\)-bundles over \(\RP{3}\) are classified by their Euler class in \(H^2(\RP{3};\Z) = \Z/2\Z\). By a Gysin sequence computation, we see that the total spaces of these two bundles are distinguished by their second integral cohomology group, which is \(\Z/2\Z\) for the trivial bundle and \(0\) for the non-trivial one. With the help of \emph{Sage} and \emph{Regina}, as explained in \cref{sec:sage_regina},
    we computed this cohomology group for this example, identifying it as the trivial \(S^1\)-bundle over \(\RP{3}\).}.
\end{itemize}
These are the polytopes listed with ID numbers
145, 142, 140 and 139 in \O bro's database~\cite{ObroSite}, with the associated toric varieties listed as numbers 5, 56, 25 and 3 in Batyrev's list~\cite{Batyrev1999}.
\end{example}


\begin{remark}[Small Cover Viewpoint]\label{rmk:small_cover}
We check here that the criterion given in \Cref{thm:orientability}
is equivalent to the one derived via the theory of \emph{small covers}.

Let $(M, \omega, \mu, \tau)$ be a toric symplectic manifold
equipped with a toric real structure,
let $M^\tau$ be the toric real locus, and let $\Delta$ be the moment polytope.
The restriction $\mu : M^\tau \to \Delta$ may be viewed as a
\emph{small cover}~\cite[\S 1]{DavisJanuszkiewicz91}.
In this perspective,
$M^\tau$ is determined, up to equivalence (see below), by the
\emph{characteristic function}~\cite[Prop.\ 1.8]{DavisJanuszkiewicz91},
$\lambda : (\ZZ/2\ZZ)^d \to (\ZZ/2\ZZ)^n$
taking the generator of the $j$-th factor $\ZZ/2\ZZ$ to the primitive
normal vector to the facet $F_j$ modulo 2, $[u_j] \in (\ZZ/2\ZZ)^n$:
\[
   \lambda (a_1, \ldots , a_d) = a_1 [u_1] + \dots + a_d [u_d].
\]
The \emph{equivalence} between two small covers $\mu_k : M_k^\tau \to \Delta$,
$k=1,2$, is an automorphism $\theta : (\ZZ/2\ZZ)^n \to (\ZZ/2\ZZ)^n$
together with a $\theta$-equivariant homeomorphism
$h : M_1^\tau \to M_2^\tau$ that covers the identity on $\Delta$.

Davis and Januszkiewicz~\cite[Coroll.\ 6.7]{DavisJanuszkiewicz91}
provided the formula $w = \Pi_{j=1}^{d} (1 + x_j)$ for the
total Stiefel-Whitney class of a small cover, where the $x_j \in H^1(M^\tau, \ZZ/2\ZZ)$, 
$j=1,\ldots,d$ are the classes dual to the preimages of each facet $F_j$ in $M^\tau$.
From this, an expression for the first Stiefel-Whitney
class of $M^\tau$ follows, namely $w_1 = x_1 + \ldots + x_d$.
Since the orientability is equivalent to the vanishing of $w_1$, we obtain
the orientability criterion for toric real loci
\[
   x_1 + \ldots + x_d=0 .
\]

On the other hand, they generalized the Danilov-Jurkiewicz
theorem~\cite[Thm.\ 4.14]{DavisJanuszkiewicz91}
to give the cohomology ring of $M^\tau$ with coefficients
in $\ZZ / 2\ZZ$ as
\[
   H^*(M^\tau, \ZZ/2\ZZ) \cong (\ZZ / 2\ZZ)[x_1,\ldots,x_d] / ( \cI + \cJ ),
\]
where $\cI$ is the ideal generated by $\Pi_{j\in I} x_j$
for each index subset $I \subseteq \{ 1,\ldots, d\}$ for which
the corresponding facets $F_j$, $j \in I$ have no common intersection point,
and $\cJ$ is the ideal generated by the linear combinations of the form
\begin{equation}\label{eq:lin_relns}
   \sum_{j=1}^d \alpha_j x_j \text{ with }
   \alpha \in \lambda^* (\ZZ / 2\ZZ)^n .
\end{equation}
Representing $\lambda$ by a matrix $\Lambda$
with columns $[u_1], \ldots , [u_d]$,
its dual $\lambda^*$ is represented by the transpose matrix $\Lambda^T$,
and the coefficient vectors $\alpha$ are the vectors
in the row space of $\Lambda$.

Since the only linear relations are those in $\cJ$,
the vanishing of $w_1$ is equivalent to $w_1 \in \cJ$,
i.e., to $w_1$ being expressible as a linear combination
of the relations \eqref{eq:lin_relns},
which boils down to the condition that
\begin{equation}\label{eq:condition}
\text{the vector } (1,1,\ldots,1) \text{ from } (\ZZ / 2\ZZ)^d
\text{ lie in the row space of }\Lambda .
\end{equation}

When we pick a lattice basis so that $\Delta$ has a standard vertex
(unimodularity allows this),
by reordering the facets, we may assume that the first
$n$ normal vectors are the standard basis vectors.
Then the $n \times d$ matrix $\Lambda$ contains
the $n \times n$ identity matrix in its first $n$ columns.
Hence, the condition \eqref{eq:condition} is equivalent in matrix terms to
\[
   \underbrace{(1\; 1\; \ldots \; 1)}_{n \text{ entries}} \, \Lambda \, = \,
   \underbrace{(1\; 1\; \ldots \; 1\; 1)}_{d \text{ entries}} .
\]
This is equivalent to the sum of the entries of each vector $[u_j]$,
$j=1,\ldots , d$ giving 1 (mod 2), which in turn is 
equivalent to each normal vector $u_j \in \ZZ^n$, $j=1,\ldots , d$
having an odd number of odd entries.

The criteria of Nakayama and Nishimura~\cite[Thm 1.7]{NakayamaNishimura},
and of Soprunova and Sottile~\cite[Thm 3.1]{SoprunovaSottile}
are also equivalent and arise from the same small-cover viewpoint.
\end{remark}


\section{The Case $n=2$}
\label{sec:case_n=2}

Let $(M, \omega, \mu)$ be a toric symplectic $T$-manifold of dimension $4$
with moment polygon $\Delta$.
Up to weak isomorphism, we may assume that $\Delta$ has one standard vertex at the origin.
The corresponding $\Delta$-kaleidoscope is a topological surface (actually a PL-surface by \cref{rmk:pl_str}).
Since, in this dimension, any topological manifold admits a unique
smooth structure up to diffeomorphism, we obtain from
\cref{coroll:kaleidoscope_vs_real_locus} the following special case:

\begin{corollary}
\label{coroll:reallocusaspolygonquotient}
Let $\tau$ be a toric real structure on $(M, \omega, \mu)$.
Under the assumptions of the previous paragraph,
the toric real locus $M^\tau$ is diffeomorphic to the
smooth, compact, connected surface corresponding to the
$\Delta$-kaleidoscope.
\end{corollary}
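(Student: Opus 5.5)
The argument combines the homeomorphism of \cref{coroll:kaleidoscope_vs_real_locus} with the uniqueness of smooth structures on surfaces. First, \cref{prop:duistermaat} says that $M^\tau$ is a compact, connected, smooth lagrangian submanifold of $M$, hence a smooth closed connected surface. Second, by \cref{coroll:kaleidoscope_vs_real_locus}, the map $q$ descends to a homeomorphism $h\colon K \to M^\tau$, where $K$ is the $\Delta$-kaleidoscope. In particular, $K$ is a compact connected topological surface. By \cref{rmk:pl_str} it even carries a PL-structure, so it has a well-defined smooth structure up to diffeomorphism; this is what we call the smooth surface corresponding to $K$.

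The remaining step is to upgrade the homeomorphism $h$ to a diffeomorphism. In dimension $2$, any two smooth structures on the same topological surface are diffeomorphic. Equivalently, by the classification of compact surfaces, two smooth closed surfaces that are homeomorphic are diffeomorphic. Here homeomorphic surfaces share orientability and Euler characteristic, and the smooth classification is determined by these two invariants. Applying this to the smooth surface $M^\tau$ and to $K$ with its smoothed PL-structure yields a diffeomorphism. It need not be $h$ itself, which fails to be smooth along $\partial\Delta$ (see the remark following \cref{prop:branched_covering}).

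I expect no serious obstacle. The only point needing care is that the smooth surface corresponding to $K$ is well defined. I would settle this by citing the uniqueness of smoothings of PL- (or topological) $2$-manifolds, or more concretely by reading off the orientability of $K$ from \cref{prop:kaleidoscope_orientability} and its Euler characteristic $4-a_0$ from \cref{prop:kaleidoscope_euler}, and matching these with the standard model surface.
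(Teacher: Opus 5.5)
Your proposal is correct and follows essentially the same route as the paper. The paper likewise combines the homeomorphism $h$ of \cref{coroll:kaleidoscope_vs_real_locus} with the uniqueness of smooth structures on $2$-manifolds, and notes the PL-structure of \cref{rmk:pl_str} along the way.
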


A full overview of the $2$-dimensional toric real loci
is thus possible, thanks to Miyake and Oda's classification of
compact complex toric surfaces~\cite[Theorem 11]{OdaMiyake}
(or see~\cite[Theorem~8.2]{Oda78}, \cite[Theorem 1.28]{Oda88}) in the context of toric varieties.
Later accounts of this classification
include~\cite[Section 2.5 and Notes to Chapter 2]{Fulton}
and~\cite[Theorem VII.4.1]{Audin}.

From that classification follows that, up to the action of $\AGL(2,\Z)$,
the moment polygon of a toric symplectic $4$-manifold with $d$ edges is:

\qquad \quad \begin{minipage}{14cm}
\begin{itemize}
    \item[-- when $d=3$] an isosceles triangle as in \cref{fig:triangle_and_trapezoid};
    \item[-- when $d=4$] a trapezoid, called a
\emph{Hirzebruch trapezoid}, as in \cref{fig:triangle_and_trapezoid};
    \item[-- when $d\geq 5$] a polygon obtained from one of the above trapezoids
    by a sequence of $d-4$ blow-ups at vertices (see \Cref{def:blow_up_at_vertex}).
\end{itemize}
\end{minipage}


\begin{figure}[ht]
\centering
\begin{tikzpicture}[scale=1.7]


\begin{scope}[xshift=0cm]

\draw[-stealth,opacity=.3] (-0.5,0) -- (1.5,0);
\draw[-stealth,opacity=.3] (0,-0.5) -- (0,1.5);
\fill (0,0) circle (1pt) node[scale=0.7,below left] {$(0,0)$};
\fill (1,0) circle (1pt) node[scale=0.7,below right] {$\paren*{\ell,0}$};
\fill (0,1) circle (1pt) node[scale=0.7,above left] {$\paren*{0,\ell}$};
\filldraw[fill=blue!30,fill opacity=0.25] (0,0) -- (1,0) -- (0,1) -- cycle;

\end{scope}


\begin{scope}[xshift=3cm,scale=0.6]

\draw[-stealth,opacity=.3] (-0.9,0) -- (9,0);
\draw[-stealth,opacity=.3] (0,-0.9) -- (0,2.7);
\node[scale=0.7,below right] at (7,0) {$\paren*{k + a \ell ,0}$};
\fill (2.14 + 3*1.71,0) circle (2pt);
\node[scale=0.7,below right] at (7,0) {$\paren*{k + a \ell ,0}$};
\fill (2.14, 1.71) circle (2pt) node[scale=0.7,above right] {$\paren*{k,\ell}$};
\fill (0,1.71) circle (2pt) node[scale=0.7,above left] {$\paren*{0,\ell}$};
\filldraw[fill=blue!30,fill opacity=0.25] (0,0) -- (2.14 + 3*1.71,0) -- (2.14, 1.71) -- (0,1.71) -- cycle;
\draw[dashed] (2.14,1.71)--(2.14,0);
\end{scope}

\end{tikzpicture}
\caption{Moment polytopes with three and four edges up to $\AGL(2,\Z)$.
Here, $k,\ell >0$ and $a$ is a nonnegative integer.}
\label{fig:triangle_and_trapezoid}
\end{figure}

For instance, an $\epsilon$-blow-up at the top vertex of the isosceles triangle
on the left of \cref{fig:triangle_and_trapezoid} produces a trapezoid as on
the right of that figure with $a=1$, $k=\varepsilon$ and $\ell=a-\varepsilon$.
An $\epsilon$-blow-up of the trapezoid in \cref{fig:triangle_and_trapezoid}
is illustrated in \cref{fig:blow_up_at_vertex_other}.


\begin{figure}[ht]
\centering
\begin{tikzpicture}[scale=0.9]
\draw[-stealth,opacity=.3] (-0.9,0) -- (9,0);
\draw[-stealth,opacity=.3] (0,-0.9) -- (0,2.7);
\fill (0,0) circle (2pt) node[scale=0.7,below left] {$(0,0)$};
\fill[fill opacity=0.5] (2.14 + 3*1.71,0) circle (2pt) node[scale=0.7,below right] at (7,0) {$\paren*{k + a \ell,0}$};
\fill (2.14 + 3*1.71 - .57,0) circle (2pt);
\fill (2.14 + 2*1.71,.57) circle (2pt);
\fill (2.14, 1.71) circle (2pt) node[scale=0.7,above right] {$\paren*{k,\ell}$};
\fill (0,1.71) circle (2pt) node[scale=0.7,above left] {$\paren*{0,\ell}$};
\filldraw[fill=blue!30,fill opacity=0.25] (0,0) -- (2.14 + 3*1.71 - .57,0) -- (2.14 + 2*1.71,.57) -- (2.14, 1.71) -- (0,1.71) -- cycle;
\filldraw[dashed,fill=blue!30,fill opacity=0.11] (2.14 + 2*1.71,.57) -- (2.14 + 3*1.71,0) -- (2.14 + 3*1.71 - .57,0) -- cycle;
\draw[dashed] (2.14,1.71)--(2.14,0);
\end{tikzpicture}
\caption{An $\epsilon$-blow-up of the trapezoid from
\cref{fig:triangle_and_trapezoid} at the right-most vertex.}
\label{fig:blow_up_at_vertex_other}
\end{figure}


\cref{coroll:reallocusaspolygonquotient}
gives a practical description of a toric real locus as a smooth surface.
For instance, the unimodular triangle $\Delta$ in \cref{fig:triangle_and_trapezoid}
is the moment polygon of the toric symplectic 4-manifold
$(\CP{2}, 2\ell \omega_{_\text{FS}}, \mu)$.
For any toric real structure, we recover the toric real locus $\RR\PP^2$
from the  $\Delta$-kaleidoscope as in \cref{fig:square}.

\begin{figure}[ht]
\centering
\begin{tikzpicture}[scale=1.8]
\draw[-stealth,opacity=.5] (-1.3,0) -- (1.5,0);
\draw[-stealth,opacity=.5] (0,-1.3) -- (0,1.5);
\fill (0,0) circle (1pt) node[scale=0.7,below left] {$(0,0)$};
\fill (1,0) circle (1pt) node[scale=0.7,below right] {$\paren*{\ell,0}$};
\fill (0,1) circle (1pt) node[scale=0.7,above left] {$\paren*{0,\ell}$};
\filldraw[fill=blue!50,fill opacity=0.25] (0,0) -- (1,0) -- (0,1) -- cycle;
\filldraw[fill=blue!20,fill opacity=0.25] (0,0) -- (-1,0) -- (0,1) -- cycle;
\filldraw[fill=blue!20,fill opacity=0.25] (0,0) -- (1,0) -- (0,-1) -- cycle;
\filldraw[fill=blue!20,fill opacity=0.25] (0,0) -- (-1,0) -- (0,-1) -- cycle;
\node at (0.2,0.33) {$\Delta$};
\node at (0.33,-0.3) {$\Delta_{_{(+,-)}}$};
\node at (-0.3,0.3) {$\Delta_{_{(-,+)}}$};
\node at (-0.3,-0.3) {$\Delta_{_{(-,-)}}$};
\draw[thick, -stealth] (0.5,0.5) -- (0.8,0.8) node[scale=0.7,below right] {$u=\begin{pmatrix}1 \\1 \end{pmatrix}$};
\end{tikzpicture}
\hspace{20mm}
\begin{tikzpicture}[scale = 1.8, baseline = -66pt]
\begin{scope}[very thin]
\draw[opacity=.5] (-1,0) -- (1,0);
\draw[opacity=.5] (0,-1) -- (0,1);
\end{scope}
\filldraw[fill=blue!20,fill opacity=0.25] (1,0) -- (0,1) -- (-1,0) -- (0,-1) -- cycle;
\begin{scope}[very thick,decoration={markings,
mark=at position 0.5 with {\arrow{>}}}]
\draw[postaction={decorate}] (0,1)--(1,0);
\draw[postaction={decorate}] (0,-1)--(-1,0);
\end{scope}
\begin{scope}[very thick,decoration={markings,
mark=at position 0.45 with {\arrow{<}},
mark=at position 0.55 with {\arrow{<}}}]
\draw[postaction={decorate}] (0,1)--(-1,0);
\draw[postaction={decorate}] (0,-1)--(1,0);
\end{scope}
\end{tikzpicture}
\caption{The $\Delta$-kaleidoscope for $\CC\PP^2$ with its boundary
identifications (given by different arrows) yielding $\RR\PP^2$ as toric real locus.}
\label{fig:square}
\end{figure}

\begin{example}\label{ex:hirzebruchlocus}
The family of trapezoids from \cref{fig:triangle_and_trapezoid},
corresponds to toric symplectic Hirzebruch surfaces,
shortly denoted $\cH_a$ (see, for instance, \cite[Exercise IV.4]{Audin}).
Here, we choose to omit the (symplectically-significant) side lengths
$k$ and $\ell$, since these do not affect the normal vectors,
hence do not affect the diffeomorphism type of a toric real locus.
The relevant outward-pointing primitive normal vectors to the edges are
$u=(0,1)$ and $v=(1,a)$.
According to \cref{coroll:reallocusaspolygonquotient},
for any toric real structure on $\cH_a$,
the diffeomorphism type of the toric real locus
depends only on the parity of $a$:
whereas the horizontal edge in the first quadrant with normal
$u=(0,1)$ always gets identified with the horizontal edge in
$\Delta_{(-1)^u}=\Delta_{(+,-)}$ in the fourth quadrant,
the slanted edge in the first quadrant with normal
$v=(1,a)$ gets identified with the slanted edge in
$\Delta_{(-1)^{v}}=\Delta_{(-,(-)^a)}$,
in the second or third quadrant, depending on whether $a$
is even or odd, respectively.
When $a$ is even, we see that
the toric real locus of $\cH_a$ is diffeomorphic to a 2-torus,
as illustrated in~\cref{fig:evenhirzebruch};
when $a$ is odd, the toric real locus of
$\cH_a$ is diffeomorphic to a Klein bottle, as illustrated
in \cref{fig:oddhirzebruch}.
\qedhere

\begin{figure}[ht]
\centering

\begin{tikzpicture}[scale = 1.8]
\draw[-stealth,opacity=.5,gray] (-1.3,0) -- (1.5,0);
\draw[-stealth,opacity=.5,gray] (0,-0.8) -- (0,1);
\fill (0,0) circle (1pt) node[scale=0.5,below left] {$(0,0)$};
\fill (1,0) circle (1pt) node[scale=0.5,below right] {$(k,0)$};
\fill (1,0.6) circle (1pt) node[scale=0.5,above right] {$(k,\ell)$};
\fill (0,0.6) circle (1pt) node[scale=0.5,above left] {$(0, \ell)$};
\filldraw[fill=blue!50,fill opacity=0.25] (0,0) -- (1,0) -- (1,0.6) -- (0,0.6) -- cycle;
\filldraw[fill=blue!20,fill opacity=0.25] (0,0) -- (1,0) -- (1,-0.6) -- (0,-0.6) -- cycle;
\filldraw[fill=blue!20,fill opacity=0.25] (0,0) -- (-1,0) -- (-1,-0.6) -- (0,-0.6) -- cycle;
\filldraw[fill=blue!20,fill opacity=0.25] (0,0) -- (-1,0) -- (-1,0.6) -- (0,0.6) -- cycle;
\node at (0.4,0.3) {$\Delta$};
\node at (0.53,-0.3) {$\Delta_{_{(+,-)}}$};
\node at (-0.4,0.27) {$\Delta_{_{(-,+)}}$};
\node at (-0.4,-0.3) {$\Delta_{_{(-,-)}}$};
\draw[-latex, thick] (0.5,0.6) -- node[right, scale=0.5] {$u=\begin{pmatrix}0 \\1 \end{pmatrix}$} (0.5,0.9);
\draw[-latex, thick] (1,0.4) -- (1.3,0.4) node[below, scale=0.5] {$v=\begin{pmatrix}1 \\0 \end{pmatrix}$};
\end{tikzpicture}
\hspace{27mm}
%
\begin{tikzpicture}[scale = 1.8, baseline = -40pt]
\begin{scope}[very thin]
\draw[opacity=.5] (-1,0) -- (1,0);
\draw[opacity=.5] (0,-0.6) -- (0,0.6);
\end{scope}
\filldraw[fill=blue!20,fill opacity=0.25] (-1,0.6) -- (1,0.6) -- (1,-0.6) -- (-1,-0.6) -- cycle;
\begin{scope}[very thick,decoration={markings,
mark=at position 0.55 with {\arrow{>}}}]
\draw[postaction={decorate}] (0,0.6)--(1,0.6);
\draw[postaction={decorate}] (0,-0.6)--(1,-0.6);
\end{scope}
\begin{scope}[very thick,decoration={markings,
mark=at position 0.5 with {\arrow{>}},
mark=at position 0.6 with {\arrow{>}}}]
\draw[postaction={decorate}] (-1,0.6)--(0,0.6);
\draw[postaction={decorate}] (-1,-0.6)--(0,-0.6);
\end{scope}
\begin{scope}[thick,decoration={markings,
mark=at position 0.6 with {\arrow{triangle 60}}}]
\draw[postaction={decorate}] (1,0) -- (1,0.6);
\draw[postaction={decorate}] (-1,0) -- (-1,0.6);
\end{scope}
\begin{scope}[thick,decoration={markings,
mark=at position 0.55 with {\arrow{triangle 60}},
mark=at position 0.7 with {\arrow{triangle 60}}}]
\draw[postaction={decorate}] (1,-0.6) -- (1,0);
\draw[postaction={decorate}] (-1,-0.6) -- (-1,0);
\end{scope}
\end{tikzpicture}
\\[3ex]
%
\begin{tikzpicture}[scale = 2]
\draw[-stealth,opacity=.5,gray] (-1.5,0) -- (1.8,0);
\draw[-stealth,opacity=.5,gray] (0,-0.7) -- (0,0.9);
\fill (0,0) circle (1pt) node[scale=0.5,below left] {$(0,0)$};
\fill (1.3,0) circle (1pt) node[scale=0.5,below right] {$(k+2\ell,0)$};
\fill (0.3,0.5) circle (1pt) node[scale=0.5,above right] {$(k,\ell)$};
\fill (0,0.5) circle (1pt) node[scale=0.5,above left] {$(0, \ell)$};
\filldraw[fill=blue!50,fill opacity=0.25] (0,0) -- (1.3,0) -- (0.3,0.5) -- (0,0.5) -- cycle;
\filldraw[fill=blue!20,fill opacity=0.25] (0,0) -- (1.3,0) -- (0.3,-0.5) -- (0,-0.5) -- cycle;
\filldraw[fill=blue!20,fill opacity=0.25] (0,0) -- (-1.3,0) -- (-0.3,-0.5) -- (0,-0.5) -- cycle;
\filldraw[fill=blue!20,fill opacity=0.25] (0,0) -- (-1.3,0) -- (-0.3,0.5) -- (0,0.5) -- cycle;
\draw[-latex, thick] (0.12,0.5) -- node[above right, scale=0.5] {$u=\begin{pmatrix}0 \\1 \end{pmatrix}$} (0.12,0.8);
\draw[-latex, thick] (0.8,0.25) -- (1.05,0.75) node[right, scale=0.5] {$v=\begin{pmatrix}1 \\2 \end{pmatrix}$};
\end{tikzpicture}
\hspace{10mm}
%
\begin{tikzpicture}[scale = 2, baseline = -40pt]
\begin{scope}[very thin]
\draw[opacity=.5] (-1.3,0) -- (1.3,0);
\draw[opacity=.5] (0,-0.5) -- (0,0.5);
\end{scope}
\filldraw[fill=blue!20,fill opacity=0.25] (-1.3,0) -- (-0.3,0.5) -- (0.3,0.5) -- (1.3,0) -- (0.3,-0.5) -- (-0.3,-0.5) -- cycle;
\begin{scope}[very thick,decoration={markings,
mark=at position 0.7 with {\arrow{>}}}]
\draw[postaction={decorate}] (0,0.5)--(0.3,0.5);
\draw[postaction={decorate}] (0,-0.5)--(0.3,-0.5);
\end{scope}
\begin{scope}[very thick,decoration={markings,
mark=at position 0.5 with {\arrow{>}},
mark=at position 0.8 with {\arrow{>}}}]
\draw[postaction={decorate}] (-0.3,0.5)--(0,0.5);
\draw[postaction={decorate}] (-0.3,-0.5)--(0,-0.5);
\end{scope}
\begin{scope}[thick,decoration={markings,
mark=at position 0.6 with {\arrow{triangle 60}}}]
\draw[postaction={decorate}] (1.3,0) -- (0.3,0.5);
\draw[postaction={decorate}] (-1.3,0) -- (-0.3,0.5);
\end{scope}
\begin{scope}[thick,decoration={markings,
mark=at position 0.55 with {\arrow{triangle 60}},
mark=at position 0.7 with {\arrow{triangle 60}}}]
\draw[postaction={decorate}] (-0.3,-0.5) -- (-1.3,0);
\draw[postaction={decorate}] (0.3,-0.5) -- (1.3,0);
\end{scope}
\end{tikzpicture}
\caption{$\Delta$-kaleidoscopes for Hirzebruch surfaces
$\cH_a$ with $a = 0$ and $a = 2$,
and their boundary identifications yielding 2-tori as toric real loci.}
\label{fig:evenhirzebruch}
\end{figure}

\begin{figure}[ht]
\centering
\begin{tikzpicture}[scale = 2]
\draw[-stealth,opacity=.5,gray] (-1.5,0) -- (1.8,0);
\draw[-stealth,opacity=.5,gray] (0,-0.7) -- (0,0.9);
\fill (0,0) circle (1pt) node[scale=0.5,below left] {$(0,0)$};
\fill (1.3,0) circle (1pt) node[scale=0.5,below right] {$(k+\ell,0)$};
\fill (0.8,0.5) circle (1pt) node[scale=0.5,above right] {$(k,\ell)$};
\fill (0,0.5) circle (1pt) node[scale=0.5,above left] {$(0, \ell)$};
\filldraw[fill=blue!50,fill opacity=0.25] (0,0) -- (1.3,0) -- (0.8,0.5) -- (0,0.5) -- cycle;
\filldraw[fill=blue!20,fill opacity=0.25] (0,0) -- (1.3,0) -- (0.8,-0.5) -- (0,-0.5) -- cycle;
\filldraw[fill=blue!20,fill opacity=0.25] (0,0) -- (-1.3,0) -- (-0.8,-0.5) -- (0,-0.5) -- cycle;
\filldraw[fill=blue!20,fill opacity=0.25] (0,0) -- (-1.3,0) -- (-0.8,0.5) -- (0,0.5) -- cycle;
\draw[-latex, thick] (0.32,0.5) -- node[above right, scale=0.5] {$u=\begin{pmatrix}0 \\1 \end{pmatrix}$} (0.32,0.8);
\draw[-latex, thick] (1.05,0.25) -- (1.35,0.55) node[right, scale=0.5] {$v=\begin{pmatrix}1 \\1 \end{pmatrix}$};
\end{tikzpicture}
\hspace{10mm}
%
\begin{tikzpicture}[scale = 2, baseline = -40pt]
\begin{scope}[very thin]
\draw[opacity=.5] (-1.3,0) -- (1.3,0);
\draw[opacity=.5] (0,-0.5) -- (0,0.5);
\end{scope}
\filldraw[fill=blue!20,fill opacity=0.25] (-1.3,0) -- (-0.8,0.5) -- (0.8,0.5) -- (1.3,0) -- (0.8,-0.5) -- (-0.8,-0.5) -- cycle;
\begin{scope}[very thick,decoration={markings,
mark=at position 0.6 with {\arrow{>}}}]
\draw[postaction={decorate}] (0,0.5)--(0.8,0.5);
\draw[postaction={decorate}] (0,-0.5)--(0.8,-0.5);
\end{scope}
\begin{scope}[very thick,decoration={markings,
mark=at position 0.6 with {\arrow{>}},
mark=at position 0.7 with {\arrow{>}}}]
\draw[postaction={decorate}] (-0.8,0.5)--(0,0.5);
\draw[postaction={decorate}] (-0.8,-0.5)--(0,-0.5);
\end{scope}
\begin{scope}[thick,decoration={markings,
mark=at position 0.7 with {\arrow{triangle 60}}}]
\draw[postaction={decorate}] (1.3,0) -- (0.8,0.5);
\draw[postaction={decorate}] (-1.3,0) -- (-0.8,-0.5);
\end{scope}
\begin{scope}[thick,decoration={markings,
mark=at position 0.5 with {\arrow{triangle 60}},
mark=at position 0.7 with {\arrow{triangle 60}}}]
\draw[postaction={decorate}] (-0.8,0.5) -- (-1.3,0);
\draw[postaction={decorate}] (0.8,-0.5) -- (1.3,0);
\end{scope}
\end{tikzpicture}
\caption{The $\Delta$-kaleidoscope for a Hirzebruch surface $\cH_a$ with $a = 1$,
and its boundary identifications yielding a Klein bottle as toric real locus.}
\label{fig:oddhirzebruch}
\end{figure}
\end{example}


So we know the toric real loci in the basic cases of a moment polygon
with three edges, namely $\RP{2}$, and four edges, namely a torus or a Klein bottle.
Miyake and Oda's theorem states that the other toric real loci, i.e., for number $d \geq 5$ of edges,
are obtained from the basic cases for four edges by $d-4$ blow-ups at fixed points.
Since such a blow-up amounts to a connected sum
with $\RR \PP^2$ (cf. the proof of \Cref{prop:kaleidoscope_blowup}),
the set of all toric real loci (up to diffeomorphism)
of $4$-dimensional toric symplectic manifolds is clear.
In particular, with the exception of the even Hirzebruch surfaces,
such toric real loci are never orientable, and all nonorientable surfaces
occur as such toric real loci.
Altogether, we have shown the following upgrade
of \cref{coroll:reallocusaspolygonquotient}.
A similar result was obtained by Delaunay~\cite[Proposition 4.1.2]{Delaunay2} in the algebraic geometry setting.

\begin{corollary}[List of 2-Dimensional Toric Real Loci]
\label{coroll:4dim}
Let $(M,\omega,\mu,\tau)$ be a $4$-dimensional toric symplectic manifold
equipped with a toric real structure.
Then its toric real locus, $M^\tau$, is a compact connected surface which,
up to diffeomorphism, is independent of the particular choice of $\tau$.
		
If $M^\tau$ is orientable, then $(M, \omega, \mu)$ is weakly isomorphic
to a Hirzebruch surface $\cH_a$ for some even $a$, and $M^\tau$ is a $2$-torus.
In particular, $M^\tau$ is never diffeomorphic to a sphere,
or to a compact connected orientable surface of genus higher than $1$.
		
By contrast, any compact connected nonorientable surface can be realized
as such a $M^\tau$:
\begin{itemize}
\item if $M^\tau \cong \RP{2}$,
then $(M,\omega,\mu)$ is weakly isomorphic to a standard $\CP{2}$\footnote{By a
\textit{standard} $\CP{2}$, we mean that
$\Delta = \{ (x,y) \in \RR^2 \mid x,y \geq 0, x+y \leq k \}$ for some $k>0$.};
\item if $M^\tau \cong \RP{2} \,\#\, \RP{2} = \cK$ is a Klein bottle,
then $(M,\omega,\mu)$ is weakly isomorphic to a Hirzebruch surface $\cH_a$,
for some odd $a$;
\item if $M^\tau \cong \#_{m}\,\RP{2}$ is the $m$-fold
connected sum of $\RP{2}$'s for some $m \ge 3$,
then $(M,\omega,\mu)$ is weakly isomorphic to a $(m-2)$-fold blow-up of a Hirzebruch surface $\cH_a$, for some positive integer $a$.
\end{itemize}
\end{corollary}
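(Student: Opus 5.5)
The plan is to combine the kaleidoscope model with the Miyake--Oda classification recalled just before the statement. Connectedness and compactness of $M^\tau$ come from \cref{prop:duistermaat}. Being a surface, and being smooth up to diffeomorphism, comes from \cref{coroll:reallocusaspolygonquotient}. Independence from $\tau$ comes from \cref{rmk:toric_lagrangian}, since any two toric real loci differ by an ambient isomorphism. Moreover, weakly isomorphic toric manifolds have $\AGL(2,\ZZ)$-equivalent polygons and hence diffeomorphic real loci. So it suffices to identify the real locus for each polygon in the classification list.

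Next I compute the basic cases with kaleidoscopes. For $d=3$, \cref{fig:square} gives $\RP{2}$. For $d=4$, \cref{ex:hirzebruchlocus} gives a torus when $a$ is even and a Klein bottle when $a$ is odd. For $d\ge5$ the polygon is a $(d-4)$-fold blow-up of some Hirzebruch trapezoid, so \cref{prop:kaleidoscope_blowup} gives $M^\tau\cong \cH_a^\tau\#_{d-4}\RP{2}$. Since $T^2\#\RP{2}\cong \cK\#\RP{2}\cong\#_3\RP{2}$, this equals $\#_{d-2}\RP{2}$ in all cases, which is non-orientable. As a consistency check, $\chi=4-d$ by \cref{thm:euler_charact}, matching $2-(d-2)$.

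Now I read off the statement. An orientable $M^\tau$ forces $d=4$ with $a$ even, so $M^\tau$ is a torus. This excludes the sphere (as also follows from \cref{prop:no_spheres}) and any orientable surface of higher genus. For the non-orientable cases I use the Euler characteristic $\chi=4-d$, which determines $d$ from $M^\tau$. If $M^\tau\cong\RP{2}$, then $d=3$, and the triangle is standard up to $\AGL(2,\ZZ)$. If $M^\tau\cong\cK$, then $d=4$ and $a$ is odd. If $M^\tau\cong\#_m\RP{2}$ with $m\ge3$, then $d=m+2$, and the polygon is an $(m-2)$-fold blow-up of some $\cH_a$. Realizability of every $m\ge1$ follows by taking those polygons.

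The one delicate point, which I expect to be the main obstacle, is the claim that $a$ can be taken positive when $m\ge3$. I would handle it by noting that a blow-up of $\cH_0$ at a vertex is $\AGL(2,\ZZ)$-equivalent to a blow-up of $\cH_1$. Concretely, chopping a corner of the square produces a pentagon, and dropping a different edge of that pentagon reveals a trapezoid with $a=1$. Since later blow-ups commute with this re-reading of the polygon, any iterated blow-up of $\cH_0$ can be rewritten as one of $\cH_1$. Checking this via the normal vectors $(0,1),(1,0),(1,1)$ is routine.
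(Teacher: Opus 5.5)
Your proposal is correct and follows essentially the same route as the paper. Both arguments compute the kaleidoscopes for $d=3,4$, invoke the Miyake--Oda description of polygons with $d\ge 5$ edges as iterated vertex blow-ups of Hirzebruch trapezoids, use that each such blow-up adds an $\RP{2}$ summand, and finish with the identities $\TT^2\#\RP{2}\cong\cK\#\RP{2}\cong\#_3\RP{2}$. Your use of $\chi=4-d$ to recover $d$ from $M^\tau$, and your re-reading of a once-blown-up $\cH_0$ as a blown-up $\cH_1$ to justify taking $a$ positive, make explicit two steps that the paper leaves implicit.
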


The last bullet point follows from the identities
$\TT^2 \,\#\, \RP{2} \; \cong \; \cK \,\#\, \RP{2}
   \; \cong \; \RP{2} \,\#\, \RP{2} \,\#\, \RP{2}$.


Moreover, thanks to \cite[Corollary 1.29]{Oda88}, it becomes possible to
understand the $\AGL(2,\Z)$-equivalence classes of
unimodular polygons in $\RR^2$ with a specific number $d$
of edges, although this becomes more involved when $d$ is high.
Next, we cite the lists for pentagons and hexagons,
as these will be useful in \Cref{sec:case_n=3}.

When the unimodular polygon $\Delta$ has five edges,
then, up to $\AGL(2,\Z)$,
it is a blow-up of one of the previous trapezoids in
\cref{fig:triangle_and_trapezoid} at the upper right vertex;
thus, it is as in \cref{fig:pentagons}; cf.~\cite[Fig.1.17(iv)]{Oda88}.
The outward-pointing primitive normal vectors are then
$(-1,0),(0,-1),(1,a), (1,{a+1}) \allowbreak \text{ and } \allowbreak (0,1)$.


	\begin{figure}[ht]
		\centering
			\begin{tikzpicture}[scale=1.7]
			\fill (0,0) circle (1pt) node[scale=0.7,below left] {};
			\fill (2.75,0) circle (1pt) node[scale=0.7,below right] {};
			\fill (0,1) circle (1pt) node[scale=0.7,above left] {};
			\fill (0.5,1) circle (1pt) node[scale=0.7] {};
			\fill (1.25,0.75) circle (1pt) node[scale=0.7] {};
			\filldraw[fill=blue!30,fill opacity=0.25] (0,0) -- (2.75,0) -- (1.25, 0.75) -- (0.5,1) -- (0,1) -- cycle;
		\end{tikzpicture}
		\caption{Moment polygon with five edges up to $\AGL(2,\Z)$.}
		\label{fig:pentagons}
	\end{figure}

When the unimodular polygon $\Delta$ has six edges,
then, up to $\AGL(2,\Z)$, it is:
\begin{itemize}
				\item a blow-up of one of the previous pentagons at the lower left vertex. Equivalently, this is also a blow-up of a standard Hirzebruch trapezoid at the upper left and lower right vertices, with normal vectors \((-1,0),(0,-1),(1,{a-1}),(1,a),\allowbreak (0,1),\allowbreak (-1,1)\);
				\item a blow-up of one of the previous pentagons at the middle right vertex, with normal vectors \((-1,0),(0,-1),(1,a),(2,2a+1),(1,a+1),(0,1)\);
				\item a blow-up of one of the previous pentagons at the rightmost vertex, with normal vectors \((-1,0),(0,-1),(1,a-1),(1,a),(1,a+1),(0,1)\).
\end{itemize}
Thus $\Delta$ is as one of the three cases in \cref{fig:hexagons};
cf.~\cite[Fig.1.17(v)]{Oda88}.


	\begin{figure}[ht]
		\centering
			\begin{tikzpicture}[scale=1.7]
			\fill (0,0) circle (1pt) node[scale=0.7,below left] {};
			\fill (2.25,0) circle (1pt) node[scale=0.7,below right] {};
			\fill (1.75,0.5) circle (1pt) node[scale=0.7,above left] {};
			\fill (0.75,1) circle (1pt) node[scale=0.7] {};
			\fill (0.25,1) circle (1pt) node[scale=0.7] {};
			\fill (0,0.75) circle (1pt) node[scale=0.7] {};
			\filldraw[fill=blue!30,fill opacity=0.25] (0,0) -- (2.25,0) -- (1.75, 0.5) -- (0.75,1) -- (0.25,1) -- (0,0.75) -- cycle;
		\end{tikzpicture}
		\begin{tikzpicture}[scale=1.7]
			\fill (0,0) circle (1pt) node[scale=0.7,below left] {};
			\fill (2.75,0) circle (1pt) node[scale=0.7,below right] {};
			\fill (0,1) circle (1pt) node[scale=0.7,above left] {};
			\fill (0.5,1) circle (1pt) node[scale=0.7] {};
			\fill (1.5,0.625) circle (1pt) node[scale=0.7] {};
			\fill (0.875,0.875) circle (1pt) node[scale=0.7] {};
			\filldraw[fill=blue!30,fill opacity=0.25] (0,0) -- (2.75,0) -- (1.5,0.625) -- (0.875, 0.875) -- (0.5,1) -- (0,1) -- cycle;
		\end{tikzpicture}
		\begin{tikzpicture}[scale=1.7]
			\fill (0,0) circle (1pt) node[scale=0.7,below left] {};
			\fill (2.25,0) circle (1pt) node[scale=0.7,below right] {};
			\fill (0,1) circle (1pt) node[scale=0.7,above left] {};
			\fill (0.5,1) circle (1pt) node[scale=0.7] {};
			\fill (1.25,0.75) circle (1pt) node[scale=0.7] {};
			\fill (1.75,0.5) circle (1pt) node[scale=0.7] {};
			\filldraw[fill=blue!30,fill opacity=0.25] (0,0) -- (2.25,0) -- (1.75,0.5) -- (1.25, 0.75) -- (0.5,1) -- (0,1) -- cycle;
		\end{tikzpicture}
		\caption{Moment polygons with six edges up to $\AGL(2,\Z)$.}
		\label{fig:hexagons}
	\end{figure}
    

\section{The Case $n=3$}
	\label{sec:case_n=3}
	
	We have seen in \cref{sec:case_n=2} how, when \(n = 2\),
    the kaleidoscope model allows for a complete determination
    of all compact connected surfaces
    that arise as toric real loci of toric symplectic \(4\)-manifolds.
    This relied crucially on the existence of an explicit finite family
    of \textit{minimal models} for such manifolds;
    by \textit{minimal} we mean a manifold that is not the toric blow-up of another.

    In higher dimensions, the situation is more involved.
	Besides blow-ups at (fixed) points,
    blow-ups along toric symplectic submanifolds of codimension \(2k\)
    for \(k \ge 2\) should also be considered.\footnote{Symplectic blow-up
    along a symplectic submanifold, like that at a point, goes back to
    Gromov~\cite[3.4.4(D),p.342]{GromovBook}, McDuff \cite{McDuff}, Guillemin and Sternberg \cite{GuilleminSternberg89}, and, as a particular case of cutting
    suitable for the toric setting,
    Lerman \cite[Remark 1.5]{LermanSymplecticCuts}.}
    Such a blow-up corresponds to a blow-up of the moment polytope along a face of codimension \(k\) (see e.g.\ \cite[Definition 2.29]{McDuffTolman2}).
    \cref{def:blow_up_along_edge} covers the case $k=1$.
	
	Moreover, even with this increased range of allowed blow-up types,
    it follows from a result of Pelayo and Santos \cite[Theorems 2.26 and 2.27]{PelayoSantos23} that, for any \(n \ge 3\), there exist unimodular \(n\)-dimensional polytopes with an arbitrarily large number of facets that are not obtained from any other unimodular polytope by a blow-up along a face.
    In particular, this implies that there cannot be a finite list of minimal models for toric symplectic \(2n\)-manifolds.
	
	Therefore, the methods of \cref{sec:case_n=2} cannot provide a classification of higher-dimensional toric real loci.
    This does not rule out the possibility of such a classification through other methods.
    Moreover, by \cref{thm:orientability}, the real loci
    of the toric symplectic manifolds obtained through the construction of
    Pelayo and Santos
    are always nonorientable.
    We are thus led to the question:
	\begin{question}\label{question:minimaltoricrealloci}
		Are there finitely many minimal models for toric real loci when \(n \ge 3\)? What if we restrict to  orientable toric real loci?
	\end{question}
    
	Nevertheless, we consider below the toric real loci of toric symplectic $6$-manifolds,
    where the practical applicability of the
    kaleidoscope framework is demonstrated with examples from a
    partial classification result of Miyake, Oda, and Nagaya \cite{Oda78}.
	
	\subsection{Blow-ups of 3-Dimensional Toric Real Loci}
	
	A toric symplectic \(6\)-manifold can be equivariantly blown-up either at a fixed point or along a toric symplectic submanifold of dimension \(2\).
    This corresponds to a blow-up of the 3-dimensional moment polytope either at a vertex or \emph{along an edge}.
	The case of blow-up at a vertex was covered in \cref{prop:kaleidoscope_blowup} for all $n$.
    We now turn to the case of a blow-up along an edge.

    \begin{definition}
\label{def:blow_up_along_edge}
    Let $E$ be the edge of a unimodular polytope
    $\Delta$ in $\RR^n$ ($n \ge 3$) connecting vertices $\xi_1$ and $\xi_2$.
    Let $v_{k,1}, \ldots, v_{k,n-1} \in \Z^n$ be the primitive vectors
    along the other edges (i.e., besides $E$)
    emanating from $\xi_k$, $k=1,2$.
Let $\epsilon$ be a positive number small enough
for each of $\xi_k+\varepsilon v_{k,j}$, $k=1,2$, $j=1,\ldots,n-1$,
to lie in the relative interior of an edge incident to the $\xi_k$'s.
    The \textbf{$\boldsymbol{\epsilon}$-blow-up of $\Delta$
along $E$} is the polytope
$\Bl_E \! \Delta$,
whose vertices are those of $\Delta$ removing $\xi_1$ and $\xi_2$
and adding the $2(n-1)$ new vertices $\xi_k+\varepsilon v_{k,j}$, $k=1,2$,
$j=1,\ldots,n-1$.
\end{definition}

The polytope $\Bl_E \! \Delta$ is again unimodular:
the primitive vectors along the edges
emanating from the new vertex $\xi_k+\varepsilon v_{k,j}$
are $v_{k,j}$, $v_{k,j'}-v_{k,j}$ with $j' \neq j$,
and the primitive vector of $\pm(\xi_1-\xi_2)$.
The normal to the new facet (shared by the new vertices) is the sum of
the $n-1$ normals to the old facets meeting along the edge $E$.
This is illustrated in \cref{fig:polytope_blowup_along_edge} for $n=3$.
The polytope $\Bl_E \! \Delta$ corresponds to the toric symplectic manifold
that
(up to isomorphism) is the $\epsilon$-blow-up of the one
corresponding to $\Delta$ along the toric symplectic submanifold
$\mu^{-1}(E)$.


        \begin{figure}[ht]
		\begin{tikzpicture}[tdplot_main_coords, scale=1.8]

            
			\begin{scope}[xshift=0cm]
				
				\coordinate (O) at (0,0,0);
				\coordinate (A) at (1.5,0,0); 
				\coordinate (B) at (0,1,0); 
				\coordinate (C) at (0,0,1); 
				\coordinate (AB) at (1.5,1,0);
				\coordinate (BC) at (0,1,1);
				
				\fill[red] (0,0,0) circle (1pt);
				\fill (1.5,0,0) circle (1pt);
				\fill (0,1,0) circle (1pt);
				\fill[red] (0,0,1) circle (1pt);
				\fill (1.5,1,0) circle (1pt);
				\fill (0,1,1) circle (1pt);
				
				\draw[-stealth,opacity=.5] (O) -- (2.5,0,0);
				\draw[-stealth,opacity=.5] (O) -- (0,1.7,0);
				\draw[-stealth,opacity=.5] (O) -- (0,0,1.7);
				
				\draw[thick,dashed] (O) -- (A);
				\draw[thick,dashed] (O) -- (B);
				\draw[red,very thick,dashed] (O) -- (C);
				\draw[thick] (B) -- (BC);
				\draw[thick] (C) -- (BC);
				\draw[thick] (A) -- (AB);
				\draw[thick] (B) -- (AB);
				\draw[thick] (AB) -- (BC);
				\draw[thick] (A) -- (C);
				
				\filldraw[fill=blue!20, opacity=0.5] (B) -- (BC) -- (AB) -- cycle;
				\filldraw[fill=blue!20, opacity=0.5] (C) -- (BC) -- (AB) -- (A) -- cycle;

                \node at (0,1.5,1.2) {$\Delta$};

			\end{scope}
	
           			
			\begin{scope}[xshift=4cm]
				
				\coordinate (O) at (0,0,0);
				\coordinate (A) at (1.5,0,0); 
				\coordinate (B) at (0,1,0); 
				\coordinate (C) at (0,0,1); 
				\coordinate (AB) at (1.5,1,0);
				\coordinate (BC) at (0,1,1);
						
	\coordinate (AC1) at (0.5,0,0.666666);
	\coordinate (BCC1) at (0,0.333333,1);
	\coordinate (A1) at (0.5,0,0);
	\coordinate (B1) at (0,0.333333,0);
					
	\fill[red] (A1) circle (1pt);
	\fill[red] (B1) circle (1pt);
			
	\fill[red] (BCC1) circle (1pt);
	\fill[red] (AC1) circle (1pt);				
				
	\fill[fill=red!40, opacity=0.8] (AC1) -- (BCC1) -- (B1) -- (A1) -- cycle;
				
				\draw[-stealth,opacity=.5] (O) -- (2.5,0,0);
				\draw[-stealth,opacity=.5] (O) -- (0,1.7,0);
				\draw[-stealth,opacity=.5] (O) -- (0,0,1.7);

				\draw[dashed] (B) -- (B1);
				\draw[dashed] (A) -- (A1);
				\draw[dashed,red,thick] (BCC1) -- (B1);
				\draw[dashed,red,thick] (A1) -- (B1);
				\draw[dashed,red,thick] (AC1) -- (A1);

	\filldraw[fill=blue!20, opacity=0.5] (B) -- (BC) -- (AB) -- cycle;
	\filldraw[fill=blue!20, opacity=0.5] (AC1) -- (BCC1) -- (BC) -- (AB) -- (A) -- cycle;			

				\draw[thick] (B) -- (BC);
				\draw[thick] (BCC1) -- (BC);
				\draw[thick] (A) -- (AB);
				\draw[thick] (B) -- (AB);
				\draw[thick] (AB) -- (BC);
				\draw[thick] (A) -- (AC1);
				\draw[red,thick] (BCC1) -- (AC1);

	\fill (1.5,0,0) circle (1pt);
	\fill (0,1,0) circle (1pt);
	\fill (1.5,1,0) circle (1pt);
	\fill (0,1,1) circle (1pt);

                \node at (0,1.5,1.2) {$\widetilde \Delta$};

			\end{scope}
			
		\end{tikzpicture}
		\caption{An $\epsilon$-blow-up of the polytope $\Delta$ on the
        left along the edge on the $z$-axis produces a polytope
        $\widetilde \Delta$ as on the right side.
        The new facet, shaded in red, is a Hirzebruch trapezoid.}
		\label{fig:polytope_blowup_along_edge}
	\end{figure}

    Recall that, analogously to the classical complex blow-up,
    the \emph{(projective) real blow-up of a smooth
    manifold $X$ along a closed submanifold \(Y\)} of codimension \(k\), denoted \(\Bl_Y \! X\),
    is obtained by removing from \(X\) a tubular neighborhood of \(Y\), which can be seen as a normal open \(k\)-disk bundle over \(Y\), and identifying the boundary points of each disk fiber in antipodal pairs according to the quotient map \(S^{k-1} \to \RP{k-1}\); see, for example, \cite{AkbulutKing81, AkbulutKing85,AroneKankaanrinta,Zinger24}.

    We next provide a proof of the following proposition
    using kaleidoscopes.
	
\begin{proposition}[Toric Real Locus of Blow-Up is Blow-Up of Toric Real Locus, Instance 2]
\label{prop:blow_up_along_edge}
		Let \((M, \omega, \mu)\) be a \(6\)-dimensional toric symplectic manifold, let $\tau$ be a toric real structure on $M$,
        let $N \subset M$ be a 2-dimensional toric symplectic submanifold,
        and let \((\Bl_N \! M, \widetilde{\omega}, \widetilde{\mu})\) be an \(\epsilon\)-blow-up of \((M,\omega,\mu)\) along \( N \).
		Then the toric real locus of any toric real structure on \(\Bl_N \! M\) is diffeomorphic to a real blow-up of the toric real locus \(M^\tau\) along the one-dimensional submanifold \(N^\tau \coloneqq N \cap M^\tau\), i.e.,
		\[
        (\Bl_N \! M)^{\tau} \cong \Bl_{N^\tau} \! M^\tau.
        \]
\end{proposition}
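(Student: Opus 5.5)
We argue as in the proof of \cref{prop:kaleidoscope_blowup}: we compare the two kaleidoscopes near the blown-up face. Applying an element of $\AGL(3,\ZZ)$, we arrange that $\Delta=\mu(M)$ has a standard vertex at the origin and that $E=\mu(N)$ is the edge of $\Delta$ lying on the $x_3$-axis, as in \cref{fig:polytope_blowup_along_edge}. The two facets containing $E$ then lie in the coordinate planes $x_1=0$ and $x_2=0$, with outward normals $-e_1$ and $-e_2$. The new facet $\widetilde F$ of $\widetilde\Delta=\Bl_E\!\Delta$ has normal $u=(-1,-1,0)$ and identification element $(-1)^u=(-,-,+)$. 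By \cref{rmk:real_locus_of_submfld}, $N^\tau$ is the real locus of the toric surface $N$, hence a circle. In the kaleidoscope $K$ of $\Delta$ it is the image of $E\times\sqrt{\one}$, namely the union of the two copies $E$ and $(+,+,-)E$ in the cluster: a segment of the $x_3$-axis whose two ends are glued through the facets at the endpoints of $E$. By \cref{coroll:kaleidoscope_vs_real_locus}, it suffices to prove $\widetilde K\cong \Bl_{N^\tau}K$, where $\widetilde K$ is defined by the extended relation $\approx$ of \cref{rmk:def_kaleidoscope}.

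The main step is a local comparison. Away from a neighborhood of $E$, the polytopes $\Delta$ and $\widetilde\Delta$ coincide, and so do their facet normals and identifications. Near $E$, the cluster of $\Delta$ contains, around the axis segment, the union over $\sigma$ of the corner wedges $\sigma\{x_1,x_2\ge0,\ x_1+x_2\le\epsilon\}$. This union is a prism $\{|x_1|+|x_2|\le\epsilon\}$ over the relevant stretch of the $x_3$-axis, i.e.\ a square-disk bundle over that segment. After gluing at the ends of $E$, these prisms form a closed $D^2$-bundle $U$ over $N^\tau$, which is a tubular neighborhood of $N^\tau$ in $K$. In $\widetilde K$, exactly this neighborhood is removed. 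The new facet $\widetilde F$ in each orthant $\sigma$ is a slanted quadrilateral, and it is glued to its copy in orthant $(-,-,+)\sigma$. On each square fiber $\{x_3=c\}$, this gluing sends $(x_1,x_2)\mapsto(-x_1,-x_2)$ on the boundary square. That is the antipodal map on the boundary circle of the fiber disk, so it realizes exactly the quotient $S^1\to\RP{1}$ defining the real blow-up.

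The point I expect to require care is near the endpoints $\xi_1,\xi_2$ of $E$. There the third facet through $\xi_k$ (normal $w_k$) also enters the identifications, and the new facet is a Hirzebruch trapezoid rather than a product. I would check two things. First, the fiberwise antipodal gluing on the boundary of $U$ is compatible with the gluing via $(-1)^{w_k}$ that closes $U$ into a bundle over the circle $N^\tau$. Second, the truncated end pieces of the prisms match the twisted bundle structure. Both reduce to the observation that $(-1)^{u}=(-,-,+)$ commutes with every $(-1)^{w_k}$ (everything lives in the abelian group $\sqrt{\one}$), and that the edge vector of $E$ is preserved at the new vertices. Hence the antipodal identification descends fiberwise to the normal circle bundle of $N^\tau$, independently of whether that bundle is trivial.

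Finally, we upgrade the homeomorphism to a diffeomorphism. The constructions are PL (see \cref{rmk:pl_str}), and in dimension $3$ PL and smooth structures are unique up to diffeomorphism. Hence $(\Bl_N\! M)^{\widetilde\tau}\cong\widetilde K\cong\Bl_{N^\tau}K\cong\Bl_{N^\tau}M^\tau$. By \cref{rmk:toric_lagrangian}, this does not depend on the choice of $\widetilde\tau$.
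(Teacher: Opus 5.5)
Your proposal is correct and takes essentially the same route as the paper. You normalize so that $E$ lies on the $x_3$-axis at a standard vertex, observe that the $\widetilde\Delta$-cluster is the $\Delta$-cluster minus the open prism $\{|x_1|+|x_2|<\epsilon\}$, and note that the new facets are glued by $(-,-,+)$, which is the antipodal map on the boundary of each normal disk to the circle $N^\tau$. You are more explicit than the paper, which leaves both of the following implicit, about two points. The first is the ends of $E$: there the horizontal slices should strictly be replaced by slices rescaled by the height of the slanted end cap. That height is an even function of $(x_1,x_2)$, so $(-,-,+)$ still preserves these slices. The second is upgrading the homeomorphism to a diffeomorphism via uniqueness of PL/smooth structures in dimension $3$.
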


	\begin{proof}
		Let \(\Delta\) be the moment polytope of \(M\)
        and $E$ its edge corresponding to $N$, i.e., $N= \mu^{-1} (E)$.
        Without loss of generality, we assume $\Delta$ to have
        a standard vertex at the origin and $E$ to lie
        along the positive \(z\)-axis with the origin as one of its vertices.
		
		Let $\widetilde{\Delta} := \Bl_E \! \Delta$ be the moment polytope of \(\Bl_N \! M\),
        let \(U \subset \R^3\) be the open vertical square prism
        defined by $|x|+|y| < \epsilon$, and let \([U]\) be its image in the \(\Delta\)-kaleidoscope.
        The \(\widetilde{\Delta}\)-cluster is obtained from the \(\Delta\)-cluster by removing from the latter its intersection with \(U\). Therefore, the \(\widetilde{\Delta}\)-kaleidoscope is obtained from the \(\Delta\)-kaleidoscope by removing \([U]\) and identifying the resulting vertical quadrilateral facets in opposite pairs. This is illustrated in \cref{fig:reallocusafterblowupatedge}.

        \begin{figure}[ht]
		\begin{tikzpicture}[tdplot_main_coords, scale=1.8]
			

        \begin{scope}[xshift=0cm]
				
				\coordinate (O) at (0,0,0);
				\coordinate (A) at (1.5,0,0); 
				\coordinate (B) at (0,1,0); 
				\coordinate (C) at (0,0,1); 
				\coordinate (AB) at (1.5,1,0);
				\coordinate (BC) at (0,1,1);
				\coordinate (-A) at (-1.5,0,0); 
				\coordinate (-B) at (0,-1,0); 
				\coordinate (-C) at (0,0,-1); 
				\coordinate (-A-B) at (-1.5,-1,0);
				\coordinate (-B-C) at (0,-1,-1);
				\coordinate (A-B) at (1.5,-1,0);
				\coordinate (B-C) at (0,1,-1);
				\coordinate (-AB) at (-1.5,1,0);
				\coordinate (-BC) at (0,-1,1);
								
				\draw (B) -- (BC);
				\draw (C) -- (BC);
				\draw (A) -- (AB);
				\draw (B) -- (AB);
				\draw (AB) -- (BC);
				\draw (A) -- (C);
				
				\draw[dashed] (A-B) -- (-A-B);
				\draw[dashed] (B-C) -- (-B-C);
				\draw[dashed] (A) -- (-A);
				\draw[dashed] (-BC) -- (-B-C);
				\draw[dashed,red,very thick] (C) -- (-C);
				\draw[dashed] (-AB) -- (-A-B);
				\draw[dashed] (B) -- (-B);
                \draw[dashed] (-A) -- (C);
				\draw[dashed] (-A-B) -- (-BC);
				\draw[dashed] (-A) -- (-C);
				\draw[dashed] (-A-B) -- (-B-C);
				
				\fill[red] (0,0,0) circle (1pt);
				\fill (1.5,0,0) circle (1pt);
				\fill (0,1,0) circle (1pt);
				\fill[red] (0,0,1) circle (1pt);
				\fill (-1.5,0,0) circle (1pt);
				\fill (0,-1,0) circle (1pt);
				\fill[red] (0,0,-1) circle (1pt);
				\fill (1.5,1,0) circle (1pt);
				\fill (0,1,1) circle (1pt);
				\fill (1.5,-1,0) circle (1pt);
				\fill (0,1,-1) circle (1pt);
				\fill (-1.5,1,0) circle (1pt);
				\fill (0,-1,1) circle (1pt);
				\fill (-1.5,-1,0) circle (1pt);
				\fill (0,-1,-1) circle (1pt);
				
				\filldraw[fill=blue!10, opacity=0.7] (B) -- (BC) -- (-AB) -- cycle;
				\filldraw[fill=blue!10, opacity=0.7] (B) -- (B-C) -- (AB) -- cycle;
				\filldraw[fill=blue!10, opacity=0.7] (B) -- (B-C) -- (-AB) -- cycle;
				\filldraw[fill=blue!10, opacity=0.7] (C) -- (-BC) -- (A-B) -- (A) -- cycle;
				\filldraw[fill=blue!10, opacity=0.7] (-C) -- (B-C) -- (AB) -- (A) -- cycle;
				\filldraw[fill=blue!10, opacity=0.7] (-C) -- (-B-C) -- (A-B) -- (A) -- cycle;
				
				\filldraw[fill=blue!20, opacity=0.8] (B) -- (BC) -- (AB) -- cycle;
				\filldraw[fill=blue!20, opacity=0.8] (C) -- (BC) -- (AB) -- (A) -- cycle;
			\end{scope}

            
			\begin{scope}[xshift=4cm]
				
				\coordinate (O) at (0,0,0);
				\coordinate (A) at (1.5,0,0); 
				\coordinate (B) at (0,1,0); 
				\coordinate (C) at (0,0,1); 
				\coordinate (AB) at (1.5,1,0);
				\coordinate (BC) at (0,1,1);
				\coordinate (-A) at (-1.5,0,0); 
				\coordinate (-B) at (0,-1,0); 
				\coordinate (-C) at (0,0,-1); 
				\coordinate (-A-B) at (-1.5,-1,0);
				\coordinate (-B-C) at (0,-1,-1);
				\coordinate (A-B) at (1.5,-1,0);
				\coordinate (B-C) at (0,1,-1);
				\coordinate (-AB) at (-1.5,1,0);
				\coordinate (-BC) at (0,-1,1);
				
				\coordinate (AC1) at (0.5,0,0.666666);
				\coordinate (BCC1) at (0,0.5,1);
				\coordinate (-AC1) at (-0.5,0,0.666666);
				\coordinate (-BCC1) at (0,-0.5,1);
				\coordinate (A-C1) at (0.5,0,-0.666666);
				\coordinate (BC-C1) at (0,0.5,-1);
				\coordinate (-A-C1) at (-0.5,0,-0.666666);
				\coordinate (-BC-C1) at (0,-0.5,-1);
				\coordinate (A1) at (0.5,0,0);
				\coordinate (B1) at (0,0.5,0);
				\coordinate (-A1) at (-0.5,0,0);
				\coordinate (-B1) at (0,-0.5,0);
				
				\fill (-1.5,0,0) circle (1pt);
				\fill (0,-1,0) circle (1pt);
				\fill (-1.5,-1,0) circle (1pt);
				
				\draw[dashed] (-AB) -- (-A-B);
				\draw[dashed] (A-B) -- (-A-B);
				\draw[dashed] (-BC) -- (-B-C);
				\draw[dashed] (-A) -- (-AC1);
				\draw[dashed] (-A-B) -- (-BC);
				\draw[dashed] (-A) -- (-A-C1);
				\draw[dashed] (-A-B) -- (-B-C);
				\fill[fill=red!40, opacity=0.8] (-AC1) -- (-BCC1) -- (-BC-C1) -- (-A-C1) -- cycle;
				\fill[fill=red!40, opacity=0.8] (-AC1) -- (BCC1) -- (BC-C1) -- (-A-C1) -- cycle;
				\fill[fill=red!40, opacity=0.8] (AC1) -- (-BCC1) -- (-BC-C1) -- (A-C1) -- cycle;
				\fill[fill=red!40, opacity=0.8] (AC1) -- (BCC1) -- (BC-C1) -- (A-C1) -- cycle;
				
				\fill[red] (-0.5,0,-0.666666) circle (1pt);
				\fill[red] (0.5,0,0) circle (1pt);
				\fill[red] (0,0.5,0) circle (1pt);
				\fill[red] (-0.5,0,0) circle (1pt);
				\fill[red] (0,-0.5,0) circle (1pt);
				
				\draw[dashed,red,thick] (BCC1) -- (BC-C1);
				\draw[dashed,red,thick] (-BCC1) -- (-BC-C1);
				\draw[dashed,red,thick] (AC1) -- (A-C1);
				\draw[dashed,red,thick] (-AC1) -- (-A-C1);
				\draw[dashed,red,thick] (BC-C1) -- (-A-C1);
				\draw[dashed,red,thick] (-BC-C1) -- (-A-C1);
				\draw[red] (-AC1) -- (-0.5,0,0.55);
				
				\draw[dashed] (B) -- (B1);
				\draw[dashed] (-B) -- (-B1);
				\draw[dashed] (A) -- (A1);
				\draw[dashed] (-A) -- (-A1);
				\draw[dashed,red,thick] (A1) -- (B1) -- (-A1) -- (-B1) -- cycle;
				
				\filldraw[fill=blue!10, opacity=0.7] (B) -- (BC) -- (-AB) -- cycle;
				\filldraw[fill=blue!10, opacity=0.7] (B) -- (B-C) -- (AB) -- cycle;
				\filldraw[fill=blue!10, opacity=0.7] (B) -- (B-C) -- (-AB) -- cycle;
				\filldraw[fill=blue!10, opacity=0.7] (AC1) -- (-BCC1) -- (-BC) -- (A-B) -- (A) -- cycle;
				\filldraw[fill=blue!10, opacity=0.7] (A-C1) -- (BC-C1) -- (B-C) -- (AB) -- (A) -- cycle;
				\filldraw[fill=blue!10, opacity=0.7] (A-C1) -- (-BC-C1) -- (-B-C) -- (A-B) -- (A) -- cycle;
				
				\filldraw[fill=blue!10, opacity=0.7] (B) -- (BC) -- (AB) -- cycle;
				\filldraw[fill=blue!10, opacity=0.7] (AC1) -- (BCC1) -- (BC) -- (AB) -- (A) -- cycle;
				
				\fill (1.5,0,0) circle (1pt);
				\fill (0,1,0) circle (1pt);
				\fill (1.5,1,0) circle (1pt);
				\fill (0,1,1) circle (1pt);
				\fill (0,1,-1) circle (1pt);
				\fill (0,-1,1) circle (1pt);
				\fill (0,-1,-1) circle (1pt);
				\fill (1.5,-1,0) circle (1pt);
				\fill (-1.5,1,0) circle (1pt);
				
				\fill[red] (0,0.5,1) circle (1pt);
				\fill[red] (0,-0.5,1) circle (1pt);
				\fill[red] (0,0.5,-1) circle (1pt);
				\fill[red] (0,-0.5,-1) circle (1pt);
				\fill[red] (0.5,0,0.666666) circle (1pt);
				\fill[red] (-0.5,0,0.666666) circle (1pt);
				\fill[red] (0.5,0,-0.666666) circle (1pt);
				
				\draw[red] (-AC1) -- (BCC1) -- (AC1);
				\draw[red] (-AC1) -- (-BCC1) -- (AC1);
                \draw[red] (BC-C1) -- (A-C1) -- (-BC-C1);
				
				
			\end{scope}
			
		\end{tikzpicture}
		\caption{The effect on the kaleidoscope of a blow-up along an edge.
        The left image depicts the $\Delta$-cluster of the
        polytope $\Delta$ from \cref{fig:polytope_blowup_along_edge}.
        The right image depicts the $\widetilde \Delta$-cluster after the
        blow-up, with the new faces shaded red.}
		\label{fig:reallocusafterblowupatedge}
	\end{figure}
		
		The tubular neighborhood $\mu^{-1}(U) \cap M^\tau$
        of the circle \(N^\tau\) in $M^\tau$ corresponds, by
        the homeomorphism $h$ of \cref{coroll:kaleidoscope_vs_real_locus},
        to the tubular neighborhood $[U]$ of the image of the
        $z$-axis in the
        $\Delta$-kaleidoscope.
        Since a tubular neighborhood identification (i.e., an isomorphism with an open disk bundle) can be chosen
        to be symmetric under the reflections given by the coordinate planes, gluing the new vertical facets in opposite pairs corresponds to identifying antipodal points in the boundary of the disk fibers of the normal bundle of $N^\tau$ in
        $M^{\tau}$.
	\end{proof}

    	
	\subsection{The Examples of Miyake--Oda}
    \label{subsec:MiyakeOdaNagaya}
	
	Although there is no hope of classifying the toric symplectic \(6\)-manifolds that are minimal with respect to blow-ups, it is possible to obtain partial classification results if one restricts to a class of examples with limited complexity.
	
	Recall that each toric symplectic manifold has an underlying smooth projective toric variety associated with the normal fan of the moment polytope.
    Conversely, a smooth projective toric variety determines, up to isomorphism, a family of toric symplectic manifolds corresponding to each choice of unimodular polytope\footnote{These
    polytopes are not required to be lattice polytopes.} with normal fan coinciding with that of the given toric variety. These polytopes are all \emph{analogous} in the sense of \cite{McDuffTolman1}, so, in particular, they are combinatorially equivalent and give rise to homeomorphic kaleidoscopes. As such, the toric real locus, up to homeomorphism, does not depend on this choice of polytope.
    
    Indeed, Miyake and Oda obtained a classification of all 3-dimensional smooth compact toric varieties with Picard number up to \(5\) that are minimal with respect to toric blow-ups, with a missing non-projective example and a simplified proof supplied by Nagaya (\cite[Theorem 9.6]{Oda78}; see also \cite[Theorem 1.34]{Oda88}). Among these varieties, we restrict our attention to those that are projective, as these are the ones relevant to the toric symplectic setting. For convenience, these examples are listed in \cref{appendix}.
    We thus gain access to plenty of examples of minimal toric symplectic \(6\)-manifolds with $b_2 \le 5$, i.e., whose moment polytopes have up to 8 facets.
    
    \begin{remark}\label{rmk:Miyake_Oda_not_symplectic}
        This result of Miyake and Oda does not yield a classification
        of all toric symplectic \(6\)-manifolds with \(b_2 \le 5\)
        that are minimal with respect to (toric symplectic) blow-ups.
        Indeed, there exist unimodular 3-dimensional polytopes whose 
        normal fans can be blown down to a smooth projective fan, yet the polytopes themselves cannot be blown down.
        Such an example was given by McDuff and Tolman in
        \cite[Remark 2.42]{McDuffTolman2}. In this case, the symplectic blow-down cannot be performed, despite the possibility of a complex-algebraic blow-down. In general, deciding whether a symplectic blow-down can be performed in dimension \(6\) or higher is a subtle question, see e.g.\ \cite{LiRuanZhang}.
        These complications do not arise when blowing up at a point,
        but only when blowing up along a submanifold of positive dimension.
    \end{remark}

    We now turn our attention to the examples of Miyake–Oda, with the goal of understanding their toric real loci. One feature that is immediately clear, by combining the list of examples in \cref{thm:Oda3d} with \cref{thm:orientability}, is their orientability (or, more commonly, the lack thereof), which can be directly read from the lists of facet normals. Accordingly, we split our analysis of these examples into two cases.

    However, the kaleidoscope construction contains further information
    beyond orientability, yielding a very explicit topological model of the toric real locus.
    In many cases, we can use this to easily recognize the underlying \(3\)-manifold, especially when combined with observations such as \cref{ex:locuscpn,rmk:product,prop:kaleidoscope_blowup}. The following remark plays a role in our analysis below.

    \begin{remark}\label{rmk:dependsonlymod2}
        The examples in \cref{thm:Oda3d} come in families that may depend on integer parameters \(a,b,c,d,e\). However, among each family, the arrangement of the rays of the fan into three-dimensional cones is independent of these parameters. In fact, the only dependence on these integer parameters is in specifying the rays themselves. This means that all moment polytopes for toric varieties in each individual family are \emph{combinatorially equivalent}; i.e., there is an inclusion-preserving bijection between their sets of faces. Of course, this bijection, in general, does not preserve the associated primitive normal vectors, as these do depend on the parameters \(a,b,c,d,e\) (i.e., these polytopes may not be \emph{analogous} in the sense of \cite{McDuffTolman1}).

        From the point of view of the methods of this paper, two combinatorially equivalent unimodular polytopes \(\Delta\) and \(\Delta'\) give rise to homeomorphic clusters with equivalent polyhedral decompositions. Still, since corresponding facets in \(\Delta\) and \(\Delta'\) might have different normal directions, the \(\Delta\)-kaleidoscope and the \(\Delta'\)-kaleidoscope might not be homeomorphic, as they are obtained using different gluing data. Yet, the identifications defining the \(\Delta\)-kaleidoscope depend only on the \emph{parity} of the components of the primitive normal vectors to the facets.
        
        In light of this, we conclude that, up to homeomorphism (and thus, in dimension \(3\), also up to diffeomorphism), the toric real loci of the examples in each family in \cref{thm:Oda3d} only depend on the \emph{parity} of the integer parameters \(a,b,c,d,e\).
        This is similar to the \(2\)-dimensional case in \cref{ex:hirzebruchlocus}, where the toric real locus of the Hirzebruch surface \(\cH_a\) depends only on the parity of \(a\).
    \end{remark}

    On the other hand, it can often be hard to immediately recognize the kaleidoscope as a familiar \(3\)-manifold, since in dimensions \(3\) and higher, there is no direct analog to the classification of surfaces. To deal with this, we made use of the \emph{Regina} software package \cite{Regina}. This program is able to process (generalized) triangulations of \(3\)-manifolds and compute many of their properties, namely various algebraic invariants. It also includes algorithms to simplify triangulations, perform connected sum decompositions, and, in many cases, identify within its database the \(3\)-manifold corresponding to a given triangulation.
    We elaborate on how we used \emph{Regina} in \cref{sec:sage_regina}.
    For now, we illustrate the results by looking at the orientable examples.

    \begin{remark}[Circle Bundles Over a Closed Surface]\label{rmk:sfs}
        Many of the toric symplectic 6-manifolds we consider are toric \(\CP{1}\)-bundles over toric symplectic 4-manifolds.
        For such a case, by an argument mentioned in the footnote of \cref{exs_fano_4}, the toric real locus is (the total space of) an \(S^1\)-bundle over a closed surface.
        These are examples of \emph{Seifert fibered spaces} (see e.g.\ \cite{Scott83}). From the theory of such spaces, one can extract in particular a classification of circle bundles over closed surfaces up to (unoriented) homeomorphism of their total spaces \(X\)
        (cf.\ \cite[Theorems 5, 6, 7, and 8]{Seifert33}, or see \cite{SeifertThrelfall} for a translation in English, \cite[Section 5.3 (Theorem 6), 5.4, and 6.2 (Theorem 2)]{Orlik}, and \cite{GeigesLange18}).

        Indeed, if \(X\) is orientable, then it can be described as
        \[
        (\mathrm{Oo}, g, b) \quad \text{or} \quad (\mathrm{On}, g, b).
        \]
        Here, \(\mathrm{O}\) indicates that the total space \(X\) is orientable, while \(\mathrm{o}\) or \(\mathrm{n}\) indicate that the base surface is, respectively, orientable or non-orientable. Moreover, \(g\) is the genus of the base surface (which must be at least \(1\) in the non-orientable case), and \(b\) is an obstruction constant taking values in \(\Z_{\ge 0}\).

        If \(X\) is non-orientable, then it can be described as\footnote{Many sources, including \cite{Regina,Orlik}, use different notation for the classes of bundles: \(\mathrm{Oo}\), \(\mathrm{On}\), \(\mathrm{No}\), \(\mathrm{Nn \, I}\), \(\mathrm{Nn \, II}\), \(\mathrm{Nn \, III}\) correspond, respectively, to \(\mathrm{o_1}\), \(\mathrm{n_2}\), \(\mathrm{o_2}\), \(\mathrm{n_1}\), \(\mathrm{n_3}\), \(\mathrm{n_4}\).} 
        \[
        (\mathrm{No}, g, b), \quad (\mathrm{Nn \, I}, g, b), \quad (\mathrm{Nn \, II}, g, b), \quad \text{or} \quad (\mathrm{Nn \, III}, g, b).
        \]
        
        Again, \(\mathrm{N}\) indicates that \(X\) is non-orientable, while \(\mathrm{o}\) or \(\mathrm{n}\) indicate whether the base surface is orientable, except that now the combination \(\mathrm{Nn}\) comprises three distinct classes of bundles. As before, \(g \ge 1\) denotes the genus of the base surface (we exclude the \(2\)-sphere, which cannot be the base of a non-orientable \(S^1\)-bundle), and \(b\) is an obstruction constant, which this time takes values in \(\Z/2\Z\). Finally, the class \(\mathrm{Nn \, II}\) only exists when \(g \ge 2\), and the class \(\mathrm{Nn \, III}\) only exists when \(g \ge 3\).

        Apart from the restrictions mentioned, every combination of values is realized by a bundle, and they are all non-homeomorphic, except that:
        \begin{itemize}
            \item the \(S^1\)-bundle over the torus of type \((\mathrm{No}, 1, b)\) is homeomorphic to the \(S^1\)-bundle over the Klein bottle of type \((\mathrm{Nn \, I}, 2, b)\);
            \item the \(S^1\)-bundle over the \(2\)-sphere of type \((\mathrm{Oo}, 0, 4)\) is homeomorphic to the \(S^1\)-bundle over the projective plane of type \((\mathrm{On}, 1, 1)\) (this is the lens space \(L(4,1)\), which is homeomorphic to the total space of the unit tangent bundle of \(\RP{2}\)).
        \end{itemize}

        Some notable examples are: \(X = \RP{3} \# \RP{3}\), which fibers as an \(S^1\)-bundle over \(\RP{2}\) of type \((\mathrm{On},1,0)\); \(X = S^3\), which through the Hopf fibration is an \(S^1\)-bundle over \(S^2\) of type \((\mathrm{Oo},0,1)\); and \(X = \RP{3}\), which fibers as an \(S^1\)-bundle over \(S^2\) of type \((\mathrm{Oo},0,2)\). The trivial bundle \(\Sigma \times S^1\) is of type \(\mathrm{Oo}\) when \(\Sigma\) is orientable, and of type \(\mathrm{Nn \, I}\) when \(\Sigma\) is non-orientable, with \(b = 0\) in both cases.
    \end{remark}

    \begin{example}
    \label{exs:orientable_miyake_oda_nagaya}
		Among the examples of Miyake–Oda (\cref{thm:Oda3d}), we find the following ones with an orientable toric real locus:
        
		\begin{itemize}
        
			\item Complex projective space \(\CP{3}\), whose toric real locus is the real projective space \(\RP{3}\).
            
			\item The \(\CP{1}\)-bundle \(\PP(\cO_{\CP{2}} \oplus \cO_{\CP{2}}(a)) \to \CP{2}\), for \(a\) odd. By \cref{rmk:dependsonlymod2}, it is enough to consider \(a = 1\). However, \(\PP(\cO_{\CP{2}} \oplus \cO_{\CP{2}}(1))\) is a blow-up of \(\CP{3}\) at a point (compare \cref{thm:Oda3d} and \cref{fig:kaleidoscope_cp3_blownup_at_point}). Therefore, we conclude that in this case the toric real locus is diffeomorphic to \(\RP{3} \# \RP{3}\).
            
			\item The \(\CP{1}\)-bundle \(X_{a,b,c}\) over the Hirzebruch surface \(\cH_a\), for \(a,b\) of the same parity and \(c\) even. Using \emph{Regina}, we determined that:
            \begin{itemize}
                \item if \(a,b,c\) are all even, then the toric real locus is the trivial \(S^1\)-bundle over the \(2\)-torus, i.e.\ the \(3\)-torus \(\T^3\). This can also be seen more directly using \cref{rmk:dependsonlymod2}, as when \(a=b=c=0\) this toric symplectic manifold is \((\CP{1})^3\).
                \item if \(a,b\) are odd and \(c\) is even, then the toric real locus is the (non-trivial) \(S^1\)-bundle over the Klein bottle of type \((\mathrm{On}, 2, 0)\).
            \end{itemize}
            
			\item The \(\CP{1}\)-bundle \(X_{a,b,c,d}\) over the \(\cH_a \# \overline{\CP{2}}\)
            (the base corresponds to a pentagon as in \cref{fig:pentagons}),
            when \(a,b\) have the same parity, \(c\) has the opposite parity, and \(d\) is even. Using \emph{Regina}, we determined that, in both cases, the toric real locus is the (non-trivial) \(S^1\)\nobreakdash-bundle over \(\#_3 \RP{2}\) of type \((\mathrm{On}, 3, 0)\).
            
			\item A \(\CP{1}\)-bundle \(X_{a,b,c,d,e}\) over one of the 
            \(\cH_a \# 2\overline{\CP{2}}\) (the base corresponds to one of the three types of hexagons in \cref{fig:hexagons}), for the following combinations of parameters:
            \begin{itemize}
                \item if the base is a hexagon of the first type, we must have \((a,b,c,d,e) \equiv (0,1,0,0,1)\) or \((1,0,1,0,1)\) mod \(2\);
                \item if the base is a hexagon of the second type, we must have \((a,b,c,d,e) \equiv (0,0,0,1,0)\) or \((1,1,0,0,0)\) mod \(2\);
                \item if the base is a hexagon of the third type, we must have \((a,b,c,d,e) \equiv (0,1,0,1,0)\) or \((1,0,1,0,0)\) mod \(2\).
            \end{itemize}
            In all these cases, using \emph{Regina}, we determined that the toric real locus is the (non-trivial) \(S^1\)-bundle over \(\#_4 \RP{2}\) of type \((\mathrm{On},4,0)\).
		\end{itemize}
		All of these orientable toric real loci live in toric \(\CP{1}\)-bundles
        or in \(\CP{3}\). In particular, none of the
        exceptional examples in \cref{thm:Oda3d} admit orientable toric real loci.
	\end{example}

    The examples with non-orientable toric real loci were harder to analyze since \emph{Regina}'s algorithm for performing connected sum decompositions can fail when applied to closed non-orientable \(3\)-manifolds that contain embedded two-sided projective planes, and the non-orientable summands are not always successfully recognized even when this decomposition succeeds. Nevertheless, we were still able to identify the toric real loci in some cases. Alternatively, we also considered the orientable double cover of each non-orientable toric real locus, which we were able to identify in all cases.

    \begin{example}\label{exs:non-orientable_miyake_oda_nagaya}
        Among the examples of Miyake--Oda (\cref{thm:Oda3d}), we find a non-orientable toric real locus in all cases not covered in \cref{exs:orientable_miyake_oda_nagaya}.
        \begin{itemize}
            \item The \(\CP{2}\)-bundle \(\PP(\cO_{\CP{1}} \oplus \cO_{\CP{1}}(b) \oplus \cO_{\CP{1}}(c)) \to \CP{1}\), for any choice of parities for \(b,c\). In all four cases, the toric real locus is the trivial bundle \(\RP{2} \times S^1\). Indeed, this is the unique \(\RP{2}\)-bundle over \(S^1\). The orientable double cover is \(S^2 \times S^1\).
            \item The \(\CP{1}\)-bundle \(\PP(\cO_{\CP{2}} \oplus \cO_{\CP{2}}(a)) \to \CP{2}\), for \(a\) even. By \cref{rmk:dependsonlymod2}, it is enough to consider the case \(a = 0\), which is the trivial bundle \(\CP{1} \times \CP{2}\). As such, the toric real locus is again \(\RP{2} \times S^1\), with orientable double cover \(S^2 \times S^1\).
            \item The \(\CP{1}\)-bundle \(X_{a,b,c}\) over the Hirzebruch surface \(\cH_a\), for \((a,b,c) \not\equiv (0,0,0)\) or \((1,1,0)\) mod \(2\).
            \begin{itemize}
                \item If \((a,b,c)=(1,0,0)\), then \(X_{a,b,c}\) is the trivial bundle \(\CP{1} \times \cH_1\). Therefore in this case the toric real locus is the trivial bundle \(K \times S^1\) over the Klein bottle.
                \item If \((a,b,c) \equiv (0,0,1)\), \((0,1,0)\) or \((0,1,1)\) mod \(2\), the toric real locus is also the trivial bundle \(K \times S^1\), which is homeomorphic to the \(S^1\)-bundle over the torus of type \((\mathrm{No},1,0)\). The orientable double cover is the \(3\)-torus \(\T^3 = \T^2 \times S^1\).
                \item In the remaining cases, when \((a,b,c) \equiv (1,0,1)\) or \((1,1,1)\) mod \(2\), the toric real locus is the non-trivial \(S^1\)-bundle over the Klein bottle of type \((\mathrm{Nn \, II}, 2, 0)\). The orientable double cover is the \(S^1\)-bundle over the Klein bottle of type \((\mathrm{On}, 2, 0)\).
            \end{itemize}
            \item The \(\CP{1}\)-bundle \(X_{a,b,c,d}\) over the \(\cH_a \# \overline{\CP{2}}\)
            (the base corresponds to a pentagon as in \cref{fig:pentagons}),
            when \((a,b,c,d) \not\equiv (0,0,1,0)\) or \((1,1,0,0)\) mod \(2\).
            \begin{itemize}
                \item If \((a,b,c,d) = (0,0,0,0)\) or \((1,0,0,0)\), \(X_{a,b,c,d}\) is the trivial product bundle \({\CP{1} \times (\cH_a \# \overline{\CP{2}})}\). Therefore, the toric real locus is the trivial bundle \(\#_3 \RP{2} \times S^1\), with orientable double cover \((\T^2 \# \T^2) \times S^1\).
                \item In the remaining cases, \emph{Regina} could not completely identify the toric real locus. Still, we were able to conclude that it is a non-trivial \(S^1\)-bundle over \(\#_3 \RP{2}\) of type \((\mathrm{Nn \, II},3,0)\) or \((\mathrm{Nn \, III},3,0)\), with orientable double cover the \(S^1\)-bundle over \(\#_4 \RP{2}\) of type \((\mathrm{On},4,0)\).
            \end{itemize}
            \item A \(\CP{1}\)-bundle \(X_{a,b,c,d,e}\) over one of the 
            \(\cH_a \# 2\overline{\CP{2}}\) (the base corresponds to one of the three types of hexagons in \cref{fig:hexagons}), for the combinations of parameters not covered in \cref{exs:orientable_miyake_oda_nagaya}. The situation is similar to the previous bullet point:
            \begin{itemize}
                \item If \((b,c,d,e) \cong (0,0,0,0)\) mod \(2\), then \(X_{a,b,c,d,e} = {\CP{1} \times (\cH_a \# 2\overline{\CP{2}})}\) is a trivial product bundle. This means that the real locus is the trivial bundle \(\#_4 \RP{2} \times S^1\), with orientable double cover \(\#_3 \T^2 \times S^1\).
                \item In all remaining cases, \emph{Regina} could not completely identify the toric real locus. Still, we were able to conclude that it is a non-trivial \(S^1\)-bundle over \(\#_4 \RP{2}\) of type \((\mathrm{Nn \, II},4,0)\) or \((\mathrm{Nn \, III},4,0)\), with orientable double cover the \(S^1\)-bundle over \(\#_6 \RP{2}\) of type \((\mathrm{On},6,0)\).
            \end{itemize}
            \item The exceptional example \(X_b\) labeled as [7-2] in \cite{Oda78}.
            \begin{itemize}
                \item When \(b\) is even, the toric real locus of \(X_b\) is diffeomorphic to the connected sum \((\RP{2} \times S^1) \# (\RP{2} \times S^1)\). The orientable double cover is \(\#_3(S^2 \times S^1)\).
                \item \emph{Regina}'s connected sum decomposition algorithm did not succeed on the toric real locus of \(X_b\) with \(b\) odd. Nevertheless, it has the same homology groups and fundamental group as in the even case, and the orientable double cover is again \(\#_3(S^2 \times S^1)\).
            \end{itemize}
            \item The exceptional example \(X_b\) labeled as [8-2] in \cite{Oda78}.
            \begin{itemize}
                \item When \(b\) is odd, the toric real locus of \(X_b\) is diffeomorphic to \((\RP{2} \times S^1) \# 3 \RP{3}\). The orientable double cover is \((S^2 \times S^1) \# 6 \RP{3}\).
                \item \emph{Regina}'s connected sum decomposition algorithm did not succeed on the toric real locus of \(X_b\) with \(b\) even. This closed non-orientable \(3\)-manifold has fundamental group
                \[\pi_1(X_b^\tau) = (\Z \times \Z/2\Z) * (\Z \times \Z/2\Z) * \Z/2\Z\]
                and the orientable double cover is \(3(S^2 \times S^1) \# 2 \RP{3}\), so it is not diffeomorphic to the toric real locus of the odd case.
            \end{itemize}
            \item The exceptional example \(X\) labeled as [8-10] in \cite{Oda78}. \emph{Regina}'s connected sum decomposition algorithm did not succeed either on this toric real locus. This closed non-orientable \(3\)-manifold has fundamental group
            \[\pi_1(X^\tau) = (\Z \times \Z/2\Z) * (\Z \times \Z/2\Z) * \Z/2\Z\]
            and the orientable double cover is \(3(S^2 \times S^1) \# 2 \RP{3}\), just as the even case in the previous bullet point.
            \item The exceptional example \(X_{b,c}\) labeled as [8-11] in \cite{Oda78}. In all four cases, the toric real locus is prime (i.e. it does not decompose as a non-trivial connected sum), but it contains a two-sided embedded projective plane. \emph{Regina} was not able to further identify the underlying closed non-orientable \(3\)-manifold. They all have the same homology groups, a fundamental group presented by five generators and five relations, and the orientable double cover \(Y \# 2(S^2 \times S^1)\), where \(Y\) is the \(S^1\)-bundle over the Klein bottle of type \((\mathrm{On},2,0)\).
        \end{itemize}
        \end{example}
        All the \(S^1\)-bundles appearing as toric real loci in
        these examples have \(b = 0\) (in the sense of \cref{rmk:sfs}).
        Moreover, we know that the real locus cannot be a sphere (which would have \(b = 1\)); cf.~\ref{prop:no_spheres}.
        By taking trivial products, the list of toric symplectic
        4-manifolds in \ref{coroll:4dim} gives rise to
        toric real loci of the form $\Sigma \times S^1$, where
        $\Sigma$ is any compact connected nonorientable surface
        or a torus, hence still with $b=0$.
        Now, $b=2$ occurs in the case of the toric real locus
        $\RR\PP^3$, which is a circle bundle over the sphere with
        Euler class $2$.
        Besides this, we are not aware of any other example
        with nonvanishing $b$.
        These observations lead to the following question:

        \begin{question}
        \label{question:circlebundles}
            What is the full list of circle bundles over closed
            surfaces that can occur as toric real loci
            for toric symplectic 6-manifolds?
            In particular, besides the $\RR \PP^3$ case,
            does any other circle bundle with nontrivial
            $b$ invariant occur?
        \end{question}

        Another interesting direction would be to investigate the geometries
        in the sense of Thurston that can occur for 
        \(3\)-dimensional toric real loci.
        Delaunay \cite{Delaunay3} proved that a \(3\)-dimensional toric real locus can never be hyperbolic. Above, we exhibited examples with the geometries \(S^3\) (spherical), \(S^2 \times \R\), \(E^3\) (euclidean), and \(H^2 \times \R\); cf.\ \cite[Table 4.1]{Scott83}.

        \begin{question}
        \label{question:3_geometries}
            If a $3$-dimensional toric real locus \(M^\tau\) admits one of Thurston's
            homogeneous geometries globally, must that geometry be one of
            the four
            \(S^3\), \(S^2 \times \R\), \(E^3\), and \(H^2 \times \R\)? Otherwise, if \(M^\tau\) is not geometric, must every piece in a geometric decomposition of \(M^\tau\) admit one of these geometries?
        \end{question}
        \vspace{-2.5mm}
        Since toric real loci are examples of small covers, an affirmative answer to the first part of this question in the orientable case follows by combining Delaunay's theorem and the work of Erokhovets \cite{Erokhovets22}. 


    \subsection{Our Use of \emph{Sage} and \emph{Regina}}
    \label{sec:sage_regina}
    In order to prepare an example to be analyzed in \emph{Regina}, we made use of \emph{Sage}, particularly the modules on fans, polyhedra, and triangulations. The process consists of the following steps:
    \begin{enumerate}
        \item Given the data of the fan of a smooth projective toric variety (as in \cref{thm:Oda3d}), construct a polytope \(\Delta\) with that normal fan. This was done by reusing the source code of the \emph{Sage} function \(\href{https://github.com/sagemath/sage/blob/develop/src/sage/geometry/fan.py#L2442}{\texttt{RationalPolyhedralFan.is\_polytopal}}\) (due essentially to Jean-Philippe Labbé).
        \item Triangulate the polytope \(\Delta\) into tetrahedra, since \emph{Regina} works only with manifold triangulations. This is natively supported in \emph{Sage}.
        \item Recreate this triangulation of \(\Delta\) as a \emph{Regina} triangulation.
        \item Create \(2^3 = 8\) copies of the triangulated \(\Delta\) and glue them together as in \cref{rmk:def_kaleidoscope} to obtain a \emph{Regina} triangulation of the \(\Delta\)-kaleidoscope.
    \end{enumerate}
    This generalizes in a straightforward way to dimensions \(n > 3\), although in these cases, there are fewer methods available in \emph{Regina} to analyze the results. Indeed, our code also addresses the case \(n = 4\), as this was necessary in \cref{exs_fano_4}.

    Given a \emph{Regina} triangulation of a three-dimensional \(\Delta\)-kaleidoscope, we analyze it as follows:
    \begin{enumerate}
        \item Simplify the triangulation to reduce the number of tetrahedra.
        \item Attempt to perform a connected sum decomposition. This always succeeds in the orientable case, but may fail for a non-orientable \(3\)-manifold in the presence of two-sided embedded projective planes.
        \item In many cases, \emph{Regina} now successfully recognizes the summands in the connected sum decomposition. Otherwise, we still have access to data such as the fundamental group and the homology groups.
        \item In the non-orientable case, we apply the same steps to the orientable double cover.
    \end{enumerate}
    Note that the simplification in step (1) is not fully deterministic. As such, in some cases (namely for the toric real locus of a manifold labeled as [8-11] in \cite{Oda78}), the connected sum decomposition algorithm will sometimes succeed and sometimes fail, depending on the simplified triangulation obtained in step (1). In this case, we may conclude both that the manifold in question is prime (from the successful runs of the algorithm), and that it contains embedded two-sided projective planes (as this is the only case in which the algorithm can fail).
	

\appendix

\section{The Smooth Projective Toric Threefolds of Miyake--Oda}\label{appendix}

    By Oda's classification of smooth compact (complex) toric surfaces~\cite{Oda78}
    (cf.\ \cref{sec:case_n=2}), these manifolds are always projective.
    Even though such a classification is impossible in higher dimensions
    (cf.\ \cref{sec:case_n=3}), one can still obtain a finite list of minimal models by suitably restricting their complexity. For instance, Miyake, Oda, and Nagaya \cite[Theorem~9.6]{Oda78} described all minimal smooth compact toric threefolds of Picard number up to \(5\). Among these, they obtained both projective and non-projective examples\footnote{\cite[p. 80]{Oda78} points out for each example whether or not it is projective, with the exception of the example labeled [8-11]. This remaining example is also projective, as can be checked by finding an appropriate polytope as in \cref{subsec:MiyakeOdaNagaya}, but see also \cite[Section 4]{FujinoSato25}.}.

Since a smooth compact toric variety is projective if and only if
its fan is the normal fan\footnote{We use the convention that the normal fan of a polytope is obtained using the \emph{outward}-pointing primitive normal vectors to the facets, which is opposite to the most common usage in algebraic geometry. Of course, though, multiplying every ray of the fan by \(-1\) does not change the isomorphism type of the toric variety.} of a unimodular polytope,
    we are interested in the projective case.
    A smooth projective toric variety gives rise to a family of toric symplectic manifolds, with the symplectic structure being determined by a choice of polytope with the given normal fan. In the toric symplectic perspective, the Picard number corresponds to the second Betti number \(b_2\), which coincides with the number of facets of the moment polytope
    minus the polytope dimension. Therefore, the examples of \cite{Oda78} have \(3\)-dimensional polytopes with up to \(8\) facets.

We now list those examples of smooth projective toric threefolds,
giving their defining data in a way particularly suitable to the applications described in \cref{subsec:MiyakeOdaNagaya}. The reader interested in the toric symplectic setting should keep in mind the discussion in \cref{rmk:Miyake_Oda_not_symplectic}.
    
	\begin{theorem}[{\cite[Theorem 9.6]{Oda78}, \cite[Theorem 1.34]{Oda88}}]
    \label{thm:Oda3d}
		Let \(X\) be a \(3\)-dimensional smooth projective toric variety with Picard number \(\rho \leq 5\) that is not a toric blow-up of any other smooth compact toric threefold. Throughout, let \(a,b,c,d,e\) denote arbitrary integers, with \(a \geq 0\).
		\begin{enumerate}
			\item If \(\rho = 1\), then \(X\) is isomorphic to the complex projective space \(\CP{3}\). This is the toric variety associated to the fan with rays \[n_0 = (-1,0,0), \, n_1 = (0,-1,0), \, n_2 = (0,0,-1), \, n_3 = (1,1,1)\] and three-dimensional cones \[\pair{n_0,n_1,n_2}, \, \pair{n_0,n_1,n_3}, \, \pair{n_0,n_2,n_3}, \, \pair{n_1,n_2,n_3}.\]
			\item If \(\rho = 2\), then \(X\) is isomorphic to one of the following two alternatives:
			\begin{enumerate}
				\item a \(\CP{2}\)-bundle over \(\CP{1}\) of the form \(\PP(\cO_{\CP{1}} \oplus \cO_{\CP{1}}(b) \oplus \cO_{\CP{1}}(c))\). This is the toric variety associated to the fan with rays \[n_0 = (-1,0,0), \, n_1 = (0,-1,0), \, n_2 = (0,0,-1), \, n_3 = (0,1,1), \, n_4 = (1,-b,-c)\] and three-dimensional cones \[\pair{n_0,n_1,n_2}, \, \pair{n_0,n_1,n_3}, \, \pair{n_0,n_2,n_3}, \, \pair{n_1,n_2,n_4}, \, \pair{n_1,n_3,n_4}, \, \pair{n_2,n_3,n_4}.\] 
				\item a \(\CP{1}\)-bundle over \(\CP{2}\) of the form \(\PP(\cO_{\CP{2}} \oplus \cO_{\CP{2}}(a))\). This is the toric variety associated to the fan with rays \[n_0 = (-1,0,0), \, n_1 = (0,-1,0), \, n_2 = (0,0,-1), \, n_3 = (0,0,1), \, n_4 = (1,1,a)\] and three-dimensional cones \[\pair{n_0,n_1,n_2}, \, \pair{n_0,n_1,n_3}, \, \pair{n_0,n_2,n_4}, \, \pair{n_0,n_3,n_4}, \, \pair{n_1,n_2,n_4}, \, \pair{n_1,n_3,n_4}.\] 
			\end{enumerate}
			\item If \(\rho = 3\), then \(X\) is isomorphic to a \(\CP{1}\)-bundle over a Hirzebruch surface \(\cH_a\). This is a toric variety associated to a fan with rays
			\begin{gather*}
				n_0 = (0,0,-1), \, n_1 = (0,0,1), \, n_2 = (-1,0,0),\\ n_3 = (0,-1,0), \, n_4 = (1,a,b), \, n_5 = (0,1,c)
			\end{gather*} and three-dimensional cones 
			\begin{gather*}
				\pair{n_0,n_2,n_3}, \, \pair{n_0,n_3,n_4}, \, \pair{n_0,n_4,n_5}, \, \pair{n_0,n_5,n_2},\\ \pair{n_1,n_2,n_3}, \, \pair{n_1,n_3,n_4}, \, \pair{n_1,n_4,n_5}, \, \pair{n_1,n_5,n_2}.
			\end{gather*}
			\item If \(\rho = 4\), then \(X\) is isomorphic to one of the following two alternatives:
			\begin{enumerate}
				\item a \(\CP{1}\)-bundle over a smooth compact toric surface of Picard number \(3\) (cf.\ end of \cref{sec:case_n=2} and \cref{fig:pentagons}). This is a toric variety associated to a fan with rays
				\begin{gather*}
					n_0 = (0,0,-1), \, n_1 = (0,0,1), \, n_2 = (-1,0,0), \, n_3 = (0,-1,0), \\ 
					n_4 = (1,a,b), \, n_5 = (1,a+1,c), \, n_6 = (0,1,d)
				\end{gather*} and three-dimensional cones 
				\begin{gather*}
					\pair{n_0,n_2,n_3}, \, \pair{n_0,n_3,n_4}, \, \pair{n_0,n_4,n_5}, \, \pair{n_0,n_5,n_6}, \, \pair{n_0,n_6,n_2},\\ 
					\pair{n_1,n_2,n_3}, \, \pair{n_1,n_3,n_4}, \, \pair{n_1,n_4,n_5}, \, \pair{n_1,n_5,n_6}, \, \pair{n_1,n_6,n_2}.
				\end{gather*}
				\item the toric variety associated to the fan with rays
				\begin{gather*}
					n_0 = (-1,0,0), \, n_1 = (0,-1,0), \, n_2 = (0,0,-1), \, n_3 = (0,1,b), \\
					n_4 = (0,0,1), \, n_5 = (0,-1,1), \, n_6 = (1,-2,1)
				\end{gather*} and three-dimensional cones 
				\begin{gather*}
					\pair{n_0,n_1,n_2}, \, \pair{n_0,n_2,n_3}, \, \pair{n_0,n_3,n_4}, \, \pair{n_0,n_4,n_5}, \, \pair{n_0,n_5,n_6},\\
					\pair{n_0,n_6,n_1}, \, \pair{n_1,n_2,n_6}, \, \pair{n_2,n_3,n_6}, \, \pair{n_3,n_4,n_6}, \, \pair{n_4,n_5,n_6}.
				\end{gather*}
				This is the variety labeled as [7-2] in \cite{Oda78} and as \(3^2 4^3 6^2\) in \cite{Oda88}.
			\end{enumerate}
			\item If \(\rho = 5\), then \(X\) is isomorphic to one of the following alternatives:
			\begin{enumerate}
				\item a \(\CP{1}\)-bundle over a smooth compact toric surface of Picard number \(4\). Up to isomorphism, we have seen that these surfaces come in three families (cf.\ end of \cref{sec:case_n=2}). Let \[m_0 = (-1,0), \, m_1 = (0,-1), \, m_2, m_3, m_4, m_5\]
				be the normal vectors to the edges of the associated moment hexagon (cf.\ \cref{fig:hexagons}). Then, \(X\) is a toric variety associated to a fan with rays
				\begin{gather*}
					n_0 = (0,0,-1), \, n_1 = (0,0,1), \, n_2 = (-1,0,0), \, n_3 = (0,-1,0),\\ 
					n_4 = (m_2,b), \, n_5 = (m_3,c), \, n_6 = (m_4,d), \, n_7 = (m_5,e)
				\end{gather*} and three-dimensional cones 
				\begin{gather*}
					\pair{n_0,n_2,n_3}, \, \pair{n_0,n_3,n_4}, \, \pair{n_0,n_4,n_5}, \, \pair{n_0,n_5,n_6}, \, \pair{n_0,n_6,n_7}, \, \pair{n_0,n_7,n_2}\\ 
					\pair{n_1,n_2,n_3}, \, \pair{n_1,n_3,n_4}, \, \pair{n_1,n_4,n_5}, \, \pair{n_1,n_5,n_6}, \, \pair{n_1,n_6,n_7}, \, \pair{n_1,n_7,n_2}.
				\end{gather*}
				
				\item the toric variety associated to the fan with rays
				\begin{gather*}
					n_0 = (-1,0,0), \, n_1 = (0,2,1), \, n_2 = (0,1,0), \, n_3 = (0,0,-1), \\
					n_4 = (0,-1,-b), \, n_5 = (0,0,1), \, n_6 = (0,1,1), \, n_7 = (1,3,2)
				\end{gather*} and three-dimensional cones
				\begin{gather*}
					\pair{n_0,n_1,n_2}, \, \pair{n_0,n_2,n_3}, \, \pair{n_0,n_3,n_4}, \, \pair{n_0,n_4,n_5}, \, \pair{n_0,n_5,n_6}, \, \pair{n_0,n_1,n_7},\\
					\pair{n_1,n_2,n_7}, \, \pair{n_2,n_3,n_7}, \, \pair{n_3,n_4,n_7}, \, \pair{n_4,n_5,n_7}, \, \pair{n_5,n_6,n_7}, \, \pair{n_6,n_0,n_7}.
				\end{gather*}
				This is the variety labeled as [8-2] in \cite{Oda78} and as \(3^2 4^4 7^2\) in \cite{Oda88}.
				\item the toric variety associated to the fan with rays
				\begin{gather*}
					n_0 = (-1,0,0), \, n_1 = (0,-1,0), \, n_2 = (0,0,-1), \, n_3 = (-1,0,1), \\
					n_4 = (0,0,1), \, n_5 = (0,1,2), \, n_6 = (1,1,1), \, n_7 = (-1,1,2)
				\end{gather*} and three-dimensional cones
				\begin{gather*}
					\pair{n_0,n_1,n_2}, \, \pair{n_0,n_1,n_3}, \, \pair{n_0,n_2,n_6}, \, \pair{n_0,n_3,n_7}, \, \pair{n_0,n_6,n_7}, \, \pair{n_1,n_2,n_6},\\
					\pair{n_1,n_3,n_7}, \, \pair{n_1,n_4,n_6}, \, \pair{n_1,n_4,n_7}, \, \pair{n_4,n_5,n_6}, \, \pair{n_4,n_5,n_7}, \, \pair{n_5,n_6,n_7}.
				\end{gather*}
				This is the variety labeled as [8-10] in \cite{Oda78} and as \(3^3 4^1 5^1 6^3\) in \cite{Oda88}.
                
				\item the toric variety associated to the fan with rays
				\begin{gather*}
					n_0 = (-1,0,0), \, n_1 = (0,1,0), \, n_2 = (0,0,-1), \, n_3 = (-1,-1,b), \\
					n_4 = (0,0,1), \, n_5 = (0,1,1), \, n_6 = (1,2,1), \, n_7 = (0,-1,c)
				\end{gather*} and three-dimensional cones
				\begin{gather*}
					\pair{n_0,n_1,n_2}, \, \pair{n_0,n_2,n_3}, \, \pair{n_0,n_3,n_4}, \, \pair{n_0,n_4,n_5}, \, \pair{n_0,n_5,n_6}, \, \pair{n_0,n_6,n_1},\\
					\pair{n_1,n_2,n_6}, \, \pair{n_2,n_7,n_6}, \, \pair{n_7,n_4,n_6}, \, \pair{n_4,n_5,n_6}, \, \pair{n_2,n_3,n_7}, \, \pair{n_3,n_4,n_7}.
				\end{gather*}
				This is the variety labeled as [8-11] in \cite{Oda78} and as \(3^2 4^2 5^2 6^2 (ii)\) in \cite{Oda88}.
			\end{enumerate}
		\end{enumerate}
	\end{theorem}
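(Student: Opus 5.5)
The statement is a transcription of the Miyake--Oda--Nagaya classification, so I will not reprove the classification itself. Instead I will derive the displayed list from \cite[Theorem~9.6]{Oda78} (equivalently \cite[Theorem~1.34]{Oda88}) in three steps: restrict to the projective cases, put each fan into the explicit normal form given above, and check that nothing is lost or duplicated.

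\emph{Step 1: recall the cited result and restrict it.} Oda's theorem describes every minimal smooth complete toric threefold with $\rho\le 5$. Since $\rho=d-3$, the fan has $d\le 8$ rays. Each fan is encoded by a triangulation of $S^2$ with $d$ vertices, labelled by vertex degrees (e.g.\ $3^2 4^3 6^2$), together with integer weights satisfying the smoothness relations $n_{j-1}+n_{j+1}=a_j n_j + (\ldots)$ around each edge.

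\begin{itemize}
\item For $\rho\le 3$ and for the families that are $\CP{1}$-bundles over toric surfaces, the classification follows from the surface classification of \cref{sec:case_n=2}. A minimal threefold whose fan has a ray $n_0$ with $-n_0$ also a ray ($n_1=(0,0,1)$) is a $\CP{1}$-bundle over the surface whose fan is the projection along $n_0$.
\item Using the normal vectors $m_j$ of the triangle, trapezoid, pentagon and hexagons listed at the end of \cref{sec:case_n=2}, the lifts $n_j=(m_j,\text{parameter})$ realise all such bundles. A $\GL(3,\ZZ)$ shear lets me normalise the last coordinate to $0$ on $n_2,n_3$, which leaves the free parameters $b,c,d,e$.
\item The remaining, non-bundle cases are exactly Oda's exceptional triangulations [7-2], [8-2], [8-10] and [8-11]. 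The non-projective ones, which Oda flags on \cite[p.~80]{Oda78}, are discarded.
\end{itemize}

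\emph{Step 2: translate into explicit rays and cones.} For each surviving combinatorial type, I fix a vertex of degree three, or an antipodal pair in the bundle case. I then apply $\GL(3,\ZZ)$ so that three rays of one cone become $-e_1,-e_2,-e_3$; this is the standard-vertex normalisation. I propagate the remaining rays through the smoothness relations $n_i+n_k = \alpha n_j + \beta n_l$ across each pair of adjacent cones. This reproduces the displayed rays, for example $n_6=(1,-2,1)$ for [7-2] and $n_7=(1,3,2)$ for [8-2]. At each step I check that every listed three-dimensional cone has determinant $\pm1$, and that the cones cover $\RR^3$ in the pattern of the given triangulation.

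\emph{Step 3: projectivity.} For each listed fan I exhibit a strictly convex support function, equivalently a unimodular polytope with that outward normal fan. Concretely, I solve the wall-crossing inequalities for the facet heights; this is a linear feasibility problem, and it can be checked with the Sage routine described in \cref{sec:sage_regina}.

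\emph{Main obstacle.} I expect the main difficulty to be the bookkeeping in Step 2 together with projectivity of [8-11]. The correspondence between Oda's weighted-triangulation notation and explicit coordinates is error-prone, and I must make sure the parameter families neither miss nor merge isomorphism classes. For [8-11], Oda does not state projectivity, so an explicit polytope must be produced for all parameters $b,c$. My first attempt would be to find one polytope for $b=c=0$, then check that the height inequalities depend on $b,c$ only through finitely many linear constraints that remain satisfiable after shifting the heights. Failing that, I would appeal to \cite[Section~4]{FujinoSato25}.
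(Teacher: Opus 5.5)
Your overall route is the paper's. The paper gives no independent argument: it cites \cite[Theorem 9.6]{Oda78} and \cite[Theorem 1.34]{Oda88}, drops the entries marked non-projective in \cite[p.~80]{Oda78} (including Nagaya's non-projective addition), and settles the one unannotated case [8-11] either by exhibiting a polytope with the Sage code of \cref{sec:sage_regina} or by \cite[Section 4]{FujinoSato25}. Several of the auxiliary claims you add to this spine are, however, wrong or beside the point.

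First, the bundle criterion in Step 1 is false: a pair of opposite rays does not make the fan a $\CP{1}$-bundle. Every exceptional fan in the list has such a pair: $n_2=(0,0,-1)$ and $n_4=(0,0,1)$ in [7-2], [8-10] and [8-11], and $n_3,n_5$ in [8-2]. For instance, [7-2] is minimal for $b\notin\{-1,0\}$, and its triangulation has degree type $3^2 4^3 6^2$. A $\CP{1}$-bundle over a pentagon surface has instead the bipyramid type $5^2 4^5$. The correct condition requires, in addition, that every maximal cone contain $n_0$ or $-n_0$; equivalently, the triangulation is the suspension of a $(d-2)$-cycle with poles $\pm n_0$. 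Since the cited theorem already separates bundle cases from exceptional ones, take that separation from Oda rather than re-deriving it. Similarly, the fact that a minimal threefold with $\rho\le 3$ is a projective bundle is part of the cited threefold result; it does not follow from the surface classification of \cref{sec:case_n=2}.

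Second, checking that nothing is ``duplicated'' cannot succeed and is not needed. $\CP{2}\times\CP{1}$ occurs both in (2-a) with $b=c=0$ and in (2-b) with $a=0$, some parameter values even give non-minimal varieties, and the paper explicitly disclaims irredundancy.

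Third, the statement is one-directional. The restriction step therefore needs only that every entry you drop is non-projective for all parameter values; whether the entries you keep are projective is irrelevant to its truth. In particular, [8-11] may be kept without any check, so your Step 3 and your ``main obstacle'' are not part of proving this statement. The paper verifies projectivity of [8-11] only because it later uses that fan as the normal fan of a moment polytope.

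Finally, a bookkeeping point. Normalising a cone to $\langle -e_1,-e_2,-e_3\rangle$ cannot literally reproduce the displayed coordinates of [8-2] for $b\neq 0$ or of [8-11], since neither contains such a cone. For example, normalising $\langle n_0,n_3,n_4\rangle$ in [8-2] turns $n_7$ into $(1,3,2-3b)$. In those cases you need an explicit $\GL(3,\ZZ)$ change of basis to match the list.
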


    Even though the majority of these examples are either
    \(\CP{3}\), toric \(\CP{1}\)-bundles over a toric surface,
    or toric \(\CP{2}\)-bundles over \(\CP{1}\), there are
    some ``exceptional'' examples in {(4-b)}, {(5-b)}, {(5-c)}, and {(5-d)}.

    \begin{remark}
        We do not claim that the list of \cref{thm:Oda3d} is irredundant. For example, \(\CP{2} \times \CP{1}\) appears in both \emph{(2-a)} and \emph{(2-b)}. Moreover, there are certain special values of the parameters \(a,b,c,d,e\) for which these examples can actually be equivariantly blown down (in the complex/algebraic category). This can be explicitly worked out in each case as an application of \cite[Remark after Corollary 1.32]{Oda88}.
        \begin{itemize}
            \item \emph{(2-a)} can be blown down when \((b,c) = (-1,0),(0,-1),(1,1)\).
            \item \emph{(2-b)} can be blown down when \(a = 1\).
            \item \emph{(3)} can be blown down in the following cases: \[c = \pm 1; \quad a = 0 \text{ and } b = \pm 1; \quad a = 1 \text{ and } b=c.\]
            \item \emph{(4-a)} can be blown down in the following cases: \[b=c; \quad c = b+d; \quad a = 0 \text{ and } c=d.\]
            \item \emph{(4-b)} can be blown down when \(b = -1,0\).
            \item \emph{(5-a)} can be blown down for certain combinations of \(a,b,c,d,e\).
            \item \emph{(5-b)} can be blown down when \(b=0,1\).
            \item \emph{(5-d)} can be blown down when \(b=c\).
        \end{itemize}
    \end{remark}

\bibliographystyle{plain}
\bibliography{references}

\end{document}